\pgfplotsset{compat=newest}
\newcommand{\tr}{{\rm tr\,}}
\newcommand{\R}{\mathbb R}
\newcommand{\NN}{\mathbb N}
\newcommand{\TT}{\mathbb T}
\newcommand{\PP}{\mathcal P}
\newcommand{\bI}{\mathbf I}
\newcommand{\bL}{\mathbf L}
\newcommand{\cV}{\mathcal{V}}
\newcommand{\cI}{\mathcal{I}}
\newcommand{\cQ}{\mathcal{Q}}
\newcommand{\cU}{\mathcal{U}}
\newcommand{\Q}{\mathcal Q}
\newcommand{\V}{\mathcal V}
\newcommand{\Div}{\mathop{\rm div}}
\newcommand{\cH}{\mathcal H}
\newcommand{\cD}{\mathcal D}
\newcommand{\cO}{\mathcal O}
\newcommand{\cP}{\mathcal P}
\newcommand{\nn}{\mathbb{N}}
\renewcommand{\div}{\textrm{div}\ \!}
\newtheorem{mydef}{Definition}[section]
\newtheorem{assumption}{Assumption}[section]
\newtheorem{example}{Example}[section]
\newtheorem{remark}{Remark}[section]
\begin{document}
\definecolor{RED}{RGB}{255,0,0}
\title{Discontinuous Galerkin Time Discretization Methods for Parabolic Problems with {Linear} Constraints}
\author{Igor Voulis\thanks{Institut f\"ur Geometrie und Praktische  Mathematik, RWTH-Aachen
University, D-52056 Aachen, Germany (voulis@igpm.rwth-aachen.de)} \and Arnold Reusken\thanks{Institut f\"ur Geometrie und Praktische  Mathematik, RWTH-Aachen
University, D-52056 Aachen, Germany (reusken@igpm.rwth-aachen.de)}
}
\maketitle
\begin{abstract} 
We consider time discretization methods for abstract parabolic problems with inhomogeneous linear constraints. Prototype examples that fit into the general framework are the heat equation with inhomogeneous (time-dependent) Dirichlet boundary conditions and the time-dependent Stokes equation with an inhomogeneous divergence constraint.  Two common ways of treating such linear constraints, namely explicit or implicit (via Lagrange multipliers) are studied.  These different treatments lead to different variational formulations of the parabolic problem. For these formulations we introduce a modification of the standard discontinuous Galerkin (DG) time discretization method in which an appropriate projection is used in the discretization of the constraint. 
For these discretizations (optimal)  error bounds, including superconvergence results,  are derived.  Discretization error bounds for the Lagrange multiplier are presented. Results of experiments confirm the theoretically predicted optimal convergence rates and show that without the modification the (standard) DG method has sub-optimal convergence behavior. 
\end{abstract}
\begin{keywords}
abstract parabolic problem, discontinuous Galerkin methods, discretization of {linear} constraints, optimal discretization error bounds
 \end{keywords}
 \begin{AMS} 65M60, 65J10
\end{AMS}
\section{Introduction}
 Nowadays the discontinuous Galerkin (DG) finite element method is a popular discretization technique for many classes of ordinary and partial differential equations, cf. the overviews in  \cite{ErnGuermond,hesthaven,DiPietro}. For ordinary differential equations  DG finite element methods have been analyzed in e.g. \cite{Delfour1981,Estep1995,Johnson1988} and the references therein. For parabolic partial differential equations the DG finite element for time discretization has been introduced in \cite{Jamet1978} and was further studied in \cite{Eriksson1991,Eriksson1995,Eriksson1998,Larsson1998,Thomee, Schwab,Matthies}. In this paper we  study DG finite element \emph{time} discretization methods for \emph{parabolic problems with  constraints}.
As far as we know, in the DG error analyses for parabolic problems available in the literature only the case with \emph{homogeneous} constraints has been considered.  In practice one often has to deal with \emph{in}homogeneous constraints. Typical examples are a heat equation with nonzero time-dependent Dirichlet boundary conditions, an instationary Stokes flow with a time-dependent inflow boundary condition and a Stokes flow with an inhomogeneous time-dependent divergence constraint. Runge-Kutta methods for parabolic equations with a non-homogenous constraint have been analyzed  in the recent paper \cite{AltmannZimmer}. 
{A DG time discretization method for the wave equation with an inhomogenous boundary condition is analyzed in \cite{Walkington2014}. }
In this paper  we consider a DG time discretization of a parabolic problem with non-homogeneous linear constraints and present an error analysis resulting in optimal error bounds. It turns out that for obtaining such optimal bounds we have to 
\emph{modify the standard formulation of a DG Galerkin finite element method} as given in \cite{Thomee}. Results of numerical experiments will show that without an appropriate modification the standard 
DG Galerkin finite element method applied to a parabolic problem with an inhomogeneous constraint does \emph{not} yield optimal convergence rates.  

In this paper the parabolic problem and the error analysis of the DG time discretization are presented in an abstract setting (as in e.g.~\cite{Schwab}). The linear constraints that are part of the problem can be treated in different ways: explicitly, by eliminating variables, implicitly, by  means of a Lagrange-multiplier, or by a combination of these. The analysis in the paper covers all three cases. Prototype examples that fit into the abstract framework are the standard scalar heat equation with inhomogeneous (time-dependent) Dirichlet boundary conditions (usually treated explicitly) and the instationary Stokes equations with  inhomogeneous (time-dependent) Dirichlet boundary conditions (usually treated explicitly) and/or an inhomogeneous divergence constraint (usually treated implicitly by the pressure Lagrange multiplier). 
These examples will be discussed in detail in Subsection~\ref{sectexamples}.


The main results of the paper are the following. Firstly, we present a \emph{modification of the standard formulation} of a DG time discretization method that yields optimal order discretization errors also for the case of \emph{in}homogeneous constraints.  Secondly, for this modified method we \emph{prove optimal discretization error bounds}.  Both optimal  global energy norm bounds and optimal superconvergence results are derived. 
Finally, for the case that the constraints are treated implicitly (e.g. divergence constraint in Stokes problem) we derive \emph{discretization error bounds for the corresponding Lagrange multiplier}. In the example of the Stokes problem this yields error bounds for the DG time discretization of the pressure variable. We note that even for the case of homogeneous constraints we are not aware of error analyses of DG time discretization methods for Stokes equations that yield (optimal order) error bounds for the pressure variable. A more detailed discussion of how the results of this paper are related to other literature is given in Remark~\ref{remliterature}.

The paper is organized as follows. In Section~\ref{SectionProblem} we introduce the class of abstract parabolic problems that we consider. Different formulations, related to the different treatments of the constraints, are presented. Furthermore, specific concrete examples that fit in the abstract setting are discussed. In Section~\ref{SectionDiscDG} we introduce a  DG scheme for the time discretization of the abstract parabolic problem.  In Section~\ref{SectionErrorAnalysis} we give an error analysis of the DG scheme.  We derive optimal order error bounds in the energy norm and prove a nodal superconvergence result. In Section~\ref{SectionErrorMixed}, for the case of an implicit treatment of the constraint, we analyze the discretization error in the Lagrange multiplier. We derive, under very mild assumptions,  a sub-optimal result for the convergence of the Lagrange multiplier and using an additional  regularity condition we are able to prove an optimal discretization error bound. In Section~ \ref{SectionFullDisc} we introduce a fully discrete numerical scheme. In Section~\ref{SectionNum} this scheme is used to perform numerical experiments and results of a few experiments are presented that illustrate the convergence behavior of the DG method.  Finally, we give an outlook in Section \ref{SectionOutlook}. 

\section{Parabolic problem with linear constraints}\label{SectionProblem}
In this section we introduce the class of parabolic problems with linear constraints that we treat in this paper. For this we first introduce some notation and recall relevant well-known results on well-posedness of (abstract) parabolic problems e.g. \cite{Thomee,Schwab}.  Let $\cU,\cH$ be real separable Hilbert spaces with a dense continuous embedding $\cU \hookrightarrow \cH$. The norms are denoted by $\|\cdot\|_\cU,\, \|\cdot\|_\cH$, respectively. These spaces induce a Gelfand triple $\cU \hookrightarrow \cH \cong \cH' \hookrightarrow \cU'$. Let $a: \cU \times \cU \to \Bbb{R}$ be a symmetric continuous coercive bilinear form on $\cU$:
\begin{align}
  |a(u,v)| & \leq \Gamma \|u\|_\cU \|v\|_\cU \quad \text{for all}~~u,v \in \cU, \label{contOld}
  \\
  a(v,v) & \geq \gamma \|v\|_\cU^2 \quad \text{for all}~~v \in \cU, \label{ellipOld}
\end{align}
with $\gamma >0$. The corresponding operator is denoted by $A:\, \cU \to \cU'$, $Au(v)=a(u,v)$. Let $I=(0,T)$ be a given time interval and for Hilbert spaces $Z_1,Z_2$ we define ${W^1(Z_1;Z_2)}:=\{\, u \in L^2(I;Z_1)~|~u' \in L^2(I;Z_2)\,\}$, $H^m(I;Z_1):=H^m(I)\otimes Z_1$,  cf.~\cite{Wloka}. A standard weak formulation of an abstract parabolic problem is as follows: find $u \in {W^1(\cU;\cU')}$ such that $u(0)=u_0$ and
\begin{equation} \label{parabolic1}
 u' + A u=f \quad \text{in}~~L^2(I;\cU').
\end{equation}
We assume $u_0 \in \cH$, $f \in L^2(I;\cU')$. This problem is well-posed \cite{Wloka,Schwab}. Standard examples of problems that fit in this abstract framework are the heat equation with \emph{homogeneous} BC ($\cU=H_0^1(\Omega)$) and the Stokes equation with \emph{homogeneous} BC ($\cU=\{\, u \in H_0^1(\Omega)^d~|~\div u= 0\,\}$), cf.~Examples~\ref{Exheat} and \ref{ExStokes} below.  In the literature one can find analyses of DG time discretizations applied to these parabolic problems \cite{Thomee,Schwab}. In this paper we present an extension of the time discretization error analysis that applies to such problems with \emph{in}homogeneous boundary conditions or an \emph{in}homogeneous divergence constraint $\div u= g$. For this we  introduce an abstract (variatiational) formulation of a problem as in \eqref{parabolic1} with  linear constraints. For a  Hilbert space $\Q$ let  $b(\cdot,\cdot): \, \cU \times \cQ \to \R$ be a continuous bilinear form and $B:\cU \to \cQ'$ the corresponding operator 
representation: $Bu(q)=b(u,q)$ for all $u \in \cU$, $q \in \cQ$. This bilinear form and linear operator $B$ are called  \emph{constraint operators}.  We define 
\[ \cV:= \ker (B)= \{\, u \in \cU~|~ Bu=0\,\},
\]
 which is a Hilbert space. 
 We assume that
\begin{equation} \label{assumpB}
  B:\, \cU \to \cQ' \quad \text{is surjective}.
\end{equation}
This is equivalent (\cite{ErnGuermond}, Lemma A.40) to the inf-sup property:
\begin{equation} \label{assumpB1}
\exists ~~\alpha>0: ~~\inf_{q \in\cQ} \sup_{u \in \cU} \frac{b(u,q)}{\|u\|_{\cU} \|q\|_\cQ} \geq \alpha.
\end{equation}
In the subsections below we give different formulations for the abstract parabolic problem with constraints which are relevant in particular applications, cf. the examples in Subsection~\ref{sectexamples}.
\subsection{Constrained formulation}  
For given $f \in L^2(I;\cV')$, $g \in L^2(I;\cQ')$ we consider the following abstract parabolic problem: find $u \in {W^1(\cU;\cV')}$ such that $u(0)=u_0$ and
\begin{equation} \label{problem}
 \begin{split}
   u'+A u & = f \quad \text{in}~~L^2(I;\cV') \\
    Bu &=g \quad \text{in}~~L^2(I;\cQ').
 \end{split}
\end{equation}
We relax the ellipticity condition {\eqref{ellipOld}} and only require ellipticity of $a(\cdot,\cdot)$ to hold on the subspace $\cV \subset \cU$. We thus have 
\begin{align}
  |a(u,v)| & \leq \Gamma \|u\|_\cU \|v\|_\cU \quad \text{for all}~~u,v \in \cU, \label{cont}
  \\
  a(v,v) & \geq \gamma \|v\|_\cU^2 \quad \text{for all}~~v \in \cV, \label{ellip}
\end{align}
with $\gamma,\Gamma >0$.
Concerning existence of a unique solution of \eqref{problem} we note the following. If $g=0$ then the constraint in \eqref{problem} implies $u \in \cV$, and thus we are in the standard setting described above, with $\cU$ replaced by $\cV$ (and the Gelfand triple $\cV \hookrightarrow \overline{\cV}^{\cH} \hookrightarrow \cV'$). Hence we have well-posedness. For the inhomogeneous case we apply a standard translation argument. Note that from the inf-sup property it follows that there exists a unique $v \in L^2(I;\cV^\perp)$ such that $Bv = g$, where $\cV^{\perp}$ is the orthogonal complement of $\cV$ in $\cU$. Assume that $g$ has sufficient regularity, e.g. $g\in H^1(I;\cQ')$, 
 such that there exists a
$v\in {W^1(\cU;\cV')}$ that satisfies $Bv=g$.  
Application of 
the standard analysis yields that there exists (a unique) $w \in {W^1(\cV;\cV')}$ with $w(0)=u_0-v(0)$ and
\[
w' + A w=f- A v-v' \quad \text{in}~~L^2(I;\cV').
\]
Hence $u:=w+v  \in W^1(\cU;\cV') $ satisfies $u(0)=u_0$   and the variational equation \eqref{problem}.  Uniqueness follows from the fact that \eqref{problem} with $g=0$, $f=0$ and $u_0=0$ has only the trivial solution.  
\subsection{Mixed formulation} \label{sectmixed} In practice, in particular in the context of Galerkin finite element discretization methods,  it may be not convenient to use the kernel space $\cV$ in the first equation in \eqref{problem}. One then introduces a suitable Lagrange multiplier to enlarge the constrained space $\cV$. We introduce such a mixed formulation. The structure of this mixed formulation is inspired by the examples considered in Subsection~\ref{sectexamples}, in which only for a part of the constraint equation $Bu=g$ a corresponding Lagrange multiplier is used. For this we decompose the constraint bilinear form as
\[
  b(u,q)=b_1(u,q_1) + b_2(u,q_2), \quad  q= (q_1,q_2) \in \cQ_1 \times \cQ_2 =\cQ,
\]
with $\cQ_i$, $i=1,2$, Hilbert spaces, and $b_i:\, \cU \times \cQ_i \to \Bbb{R}$ continuous bilinear forms. This splitting is such that for the $b_1(\cdot,\cdot)$ part a Lagrange multiplier will be introduced, whereas for the $b_2(\cdot,\cdot)$ part no Lagrange multiplier is used. We assume that $\cQ_1 \neq 0$. The bilinear forms $b_i(\cdot,\cdot)$ induce corresponding operators $B_i:\, \cU \to \cQ_i'$. From \eqref{assumpB1} it follows that 
\begin{equation} \label{assumpB1A}
 \inf_{q_i \in\cQ_i} \sup_{u \in \cU} \frac{b_i(u,q_i)}{\|u\|_{\cU} \|q_i\|_{\cQ_i}} \geq \alpha, \quad i=1,2,
\end{equation}
holds, provided $b_i(\cdot,\cdot)$ is not identically zero.  Define
\[
 \cV_i:=\ker (B_i)= \{\, u \in \cU~|~b_i(u,q_i)=0\quad  \forall ~q_i \in \cQ_i\,\}, \quad i=1,2.
\]
Hence, $\cV=\cV_1 \cap \cV_2$.  We also have
\begin{equation} \label{newid}
  \cV=\{\, u \in \cV_2~|~b_1(u,q_1)=0 \quad \forall~ q_1 \in  \cQ_1\,\}.
\end{equation}
We allow  $\cQ_2 = \{0\}$, $\cQ_1=\cQ$, in which  case we have $\cV_2=\cU,~\cV=\cV_1$. Note that $B:\cV^\perp \rightarrow \cQ_1' \times \cQ_2'$ is a bijection and the preimage of $\cQ_1'\times \{0\}$ is $\cV^\perp\cap \cV_2$. It follows that $B_1:\cV^\perp\cap \cV_2 \rightarrow \cQ_1'$ is a bijection, from which we conclude that $B_1: \cV_2 \rightarrow \cQ_1'$ is surjective. This is equivalent (\cite{ErnGuermond}, Lemma A.40) to the inf-sup property:
\begin{equation} \label{assumpB1AA}
 \exists ~~\beta>0: ~~\inf_{q_1 \in\cQ_1} \sup_{u \in \cV_2} \frac{b_1(u,q_1)}{\|u\|_{\cU} \|q_1\|_{\cQ_1}} \geq \beta.
\end{equation}
Note that if $\cQ_2 = \{0\}$ then \eqref{assumpB1AA} is the same as \eqref{assumpB1A} with $i=1$. From \eqref{assumpB1AA}  and \eqref{newid} it follows that for the adjoint $B_1'$ of $B_1:\, \cV_2 \to \cQ_1'$ we have that
\begin{equation} \label{iso1}
 B_1' :\, Q_1 \to \cV_2^0:= \{\, g \in \cV_2'~|~g(v)=0 \quad \text{for all}~~v \in \cV\, \}
\end{equation}
is an isomorphism and
\begin{equation} \label{iso2}
 \|B_1' p\|_{\cV_2'} \geq \beta \|p\|_{Q_1} \quad \text{for all}~~p \in Q_1,
\end{equation}
cf. Lemma 4.1 in \cite{GiraultRaviart}. We now introduce a mixed formulation of \eqref{problem}. For $f \in L^2(I;\cV_2')$ determine $u \in W^1(\cU;\cV_2')$, $p\in L^2(I;\cQ_1)$ such that
\begin{equation} \label{problemmixed}
 \begin{split}
   u'+A u + B_1' p& = f \quad \text{in}~~L^2(I;\cV_2') \\
    Bu &=g \quad \text{in}~~L^2(I;\cQ').
 \end{split}
\end{equation}
Here $B_1': \cQ_1 \to \cV_2'$ is the adjoint of $B_1:\, \cV_2 \to \cQ_{1}'$. Note that for $v \in \cV_2$ we have $b_2(v,q_2)=0$ for all $q_2 \in \cQ_2$, hence, $(B_1'p)(v)=b_1(v,p)=b(v,(p,0))$ holds for $v \in \cV_2$, $p \in \cQ_1$.

 We now discuss the relation between the constrained formulation \eqref{problem} and the mixed formulation \eqref{problemmixed}. Let $u\in W^1(\cU; \cV')$ be the unique solution of \eqref{problem}. We \emph{assume} that $u$ has additional regularity such that $u\in W^1(\cU; \cV_2')$. Then $\ell(t):= f(t)-u'(t)- Au(t) \in \cV_2'$ (a.e. in $t$) and $\ell(t)(v)=0$ for all $v \in \cV$, i.e., $\ell(t)\in \cV_2^0$ holds. Hence, there exists a unique $p(t) \in \cQ_1$ such that $B_1'p(t) = \ell(t)$. Using \eqref{iso2} it follows that $p \in  L^2(I;\cQ_1)$. Thus, if the unique solution $u$ of the constrained formulation \eqref{problem} has regularity $u \in W^1(\cU; \cV_2')$, there is a unique $p \in L^2(I; \cQ_1)$ such that $(u,p)$ is the unique solution of the mixed formulation   \eqref{problemmixed}.
\subsection{Examples} \label{sectexamples}
 We describe three examples that fit into the general abstract framework presented above. In these examples we consider different types of constraints, which are allowed to be inhomogeneous.  \\
\begin{example}[Heat equation] \label{Exheat} {\rm The heat equation in $\Omega\subset \Bbb{R}^d$, with \emph{(time-dependent) inhomogeneous Dirichlet boundary conditions} $h\in L^2(I;H^\frac12(\partial \Omega))$ in constrained formulation is as follows:
\begin{eqnarray*}
u' - \Delta u &=& f \quad \text{in}~~L^2(I;H^{-1}(\Omega))\\
u|_{\partial \Omega} &=& h.
\end{eqnarray*}
For this problem we have $\cU=H^1(\Omega)$, $\cH=L^2(\Omega)$, $\cQ=H^\frac12(\partial \Omega)$, $a(u,v):=\int_{\Omega} \nabla u \cdot \nabla v$, $b(u,q)=(\tr(u),q)_{H^\frac12(\partial \Omega)}$, with $\tr:\, H^1(\Omega)  \to H^\frac12(\partial \Omega)$ the trace operator, which is surjective. This yields
$\cV=\ker(B)=H_0^1(\Omega)$. The assumptions \eqref{assumpB}, \eqref{cont}, \eqref{ellip} are satisfied. We do not consider a mixed formulation, because in  the constrained formulation \eqref{problem} we use the space $\cV= H_0^1(\Omega)$, which is the natural one for the bilinear form $a(\cdot,\cdot)$. 
}\end{example}
\ \\[1ex]
\begin{example}[Stokes equations]\label{ExStokes} {\rm
 We consider the Stokes equation in $\Omega\subset \Bbb{R}^d$ with  homogeneous boundary conditions and a \emph{non-zero (time-dependent) divergence}  $g \in L^2(I;L_0^2(\Omega))$. In this setting we have  $\cU =H_0^1(\Omega)^d$, $\cH=L^2(\Omega)^d$, $\cQ=L_0^2(\Omega):=L^2(\Omega)/\R$, $a(u,v):=\int_{\Omega} \nabla u : \nabla v$, $b(u,q)=\int_\Omega \Div u\, q$, $\cV=\{\, u \in H_0^1(\Omega)^d~|~\Div u =0\,\}$. Recall that $u \in H_0^1(\Omega)^d \to \Div u \in L^2_0(\Omega)$ is surjective. The Stokes equation in  constrained formulation is as follows:
\begin{eqnarray*}
u' -\Delta u &=& f \quad \text{in}~~ L^2(I;\cV') \\
\Div u &=& g \quad \text{in}~~ L^2(I;L_0^2(\Omega)).
\end{eqnarray*}
The assumptions \eqref{assumpB}, \eqref{cont}, \eqref{ellip} are satisfied. For the mixed formulation we take $b_2 \equiv 0$, i.e., $b_1(u,q)=\int_\Omega \Div u\, q$, $q \in \cQ_1=\cQ$, and $\cV_2=\cU=H_0^1(\Omega)^d$. Hence, the formulation \eqref{problemmixed} is the standard mixed formulation of a time-dependent Stokes problem with homogeneous boundary conditions, cf. \cite{ErnGuermond} Sect. 4.1. 
} \end{example}
\ \\[1ex]
\begin{example} \label{example3} {\rm 
We consider the Stokes equation in $\Omega\subset \Bbb{R}^d$  with  \emph{non-zero (time-dependent) boundary data $h \in L^2(I;H^\frac12(\partial \Omega)^d)$ and a non-zero (time-dependent) divergence} $g  \in L^2(I;L_0^2(\Omega))$.  We then have $\cU=H^1(\Omega)^d$, $\cH=L^2(\Omega)^d$, $\cQ=L_0^2(\Omega) \times H^\frac12(\partial \Omega)^d$, $a(u,v):=\int_{\Omega} \nabla u : \nabla v$, $b(u,(q_1,q_2))=\int_\Omega \Div u\, q_1 + (\tr u, q_2)_{H^\frac12(\partial \Omega)}$, $\cV=\{\, u \in H_0^1(\Omega)^d~|~\Div u =0\,\}$. Note that $u \in H^1(\Omega)^d \to (\Div u, \tr u) \in L^2_0(\Omega) \times H^\frac12(\Omega)^d$ is surjective. The Stokes equation in constrained formulation is given by 
\begin{eqnarray*}
u' -\Delta u &=& f \quad \text{in}~~ L^2(I;\cV') \\
\Div u &=& g \quad \text{in}~~  L^2(I;L_0^2(\Omega))\\
\tr u &=& h \quad \text{in}~~  L^2(I;H^\frac12(\partial \Omega)^d).
\end{eqnarray*}
The assumptions \eqref{assumpB}, \eqref{cont}, \eqref{ellip} are satisfied. For the mixed formulation we take $\cQ_1=L_0^2(\Omega)$, $\cQ_2=H^\frac12(\Omega)^d$, $b_1(u,q_1)=\int_\Omega \Div u\, q_1$, $b_2(u,q_2)=(\tr u, q_2)_{H^\frac12(\partial \Omega)}$. Note that $\cV_2=H_0^1(\Omega)^d$. 
}\end{example}
\ \\[1ex]
\begin{remark} \label{remliterature} \rm
In the numerical approximation of these parabolic problems one has to consider discretization in space and in time. In this paper we focus on the time discretization.  We  now briefly discuss the space discretization and continue this discussion in Section~\ref{SectionFullDisc}.
The two formulations \eqref{problem} and \eqref{problemmixed} lead to two different numerical schemes. In the former case one has to replace the space $\cV$ by a suitable discrete (e.g., finite element) space. In the mixed formulation the spaces $\cV_2$ and $\cQ_1$ have to be replaced by a suitable pair of discrete (finite element) spaces.

In the specific case of the Stokes problem this leads to two different well-known finite element approaches. Numerous conforming and non-conforming finite element (or wavelet) schemes for space discretization of the Stokes problem  exist. By far most of these schemes use the mixed formulation \eqref{problemmixed}, cf. \cite{GiraultRaviart,ErnGuermond,Quarteroni,Braess,Elman2014}. In that setting  we need a pair of discrete spaces $\cV^h_2,\cQ_1^h$ for approximation of functions from the spaces $\cV_2=H^1_0(\Omega)^d$ and $\cQ_1=L^2_0(\Omega)$. Extensive literature concerning the choice of such a pair of spaces is  available, cf. \cite{ErnGuermond,GiraultRaviart} and the references therein.  Recently so-called pressure robust mixed methods  have been studied, e.g. \cite{LLS_ARXIV_2017,Brennecke,Linke1}. 
To treat the Stokes problem by means of the constrained formulation \eqref{problemmixed} a discrete (finite element or wavelet) space $\cV^h$ with divergence-free functions is required. Examples of such discrete spaces are treated in, e.g., \cite{Stevenson2016,Zhang,Falk,Guzman,Lehrenfeld2016}.
By far most of the error analyses of these methods (for both approaches) known in the liteature result in bounds for the \emph{spatial} discretization error in case of a stationary problem  and for the discretization error in the time-dependent \emph{semidiscrete} (i.e., discrete in space only) problem.  In only very few papers, e.g., \cite{Matthies,Linke2018}, the error in the fully discrete problem is treated. In these papers, however, only the case with homogeneous constraints (i.e., $\Div u=0$ and homogeneous Dirichlet boundary conditions) is treated and the error in the discretization of the Lagrange multiplier (pressure) is not considered or only suboptimal error bounds are derived.  

We also note that instead of these popular Rothe or method of lines techniques one can apply a direct full space-time approach, e.g. the recent papers \cite{SchwabStevenson,Steinbach2018}.

In this paper we do not compare all these different approaches. We present and analyze a  finite element method that is of Rothe type. For discretization in time we apply a DG finite element method. An important  aspect is that this method, which is a modification of the standard DG time discretization method (\cite{Thomee}) allows the treatment of time-dependent inhomogeneous constraints with an optimal order of accuracy. For the semidiscrete problem (i.e., discrete in time only) optimal discretization error bounds for both approaches (constrained and mixed) are derived. In the mixed formulation we derive optimal bounds for both the primal variable $u$ and the Lagrange multiplier $p$.
\end{remark}

\begin{remark}  \label{remgenral} \rm 
 \rm In the analysis it is essential that the constraint operator $B$ does not depend on $t$. Above we also assumed that $A$  is independent of  $t$. This assumption, however, is only essential for superconvergence and Subsection \ref{SubSectionBadLM}, all other results remain valid for a time-dependent $A$. Furthermore, the analysis also applies to a generalization of the problem \eqref{problem} where the term $u'(t)$ is replaced by $M u'(t)$ with $M$ a symmetric, elliptic bounded linear operator on $\cH$. Without loss of generality we can assume that $Mu(v) = (u,v)_\cH$. 
 {An example of the latter generalization is given  in Subsection \ref{SubSectionStokesInterface}.} 
\end{remark}


\section{Discontinuous Galerkin discretization} \label{SectionDiscDG}
In this section we present DG time discretization methods for the variational problems \eqref{problem} and \eqref{problemmixed}. First some notation is introduced.  We take a fixed $q \in \Bbb{N}$, $q \geq 1$. In the discretization we will use  polynomials of degree $q-1$ ($q$ degrees of freedom) in time. The space of  polynomials of degree $q-1$  is denoted by $\PP_{q-1}$. 
For $N\in \nn$, introduce $0=t_0<\dots<t_N=T$, $I_n=(t_{n-1},t_n]$,  and $k_n=|I_n|$ for $n=1,\dots,N$. For simplicity, we assume that $k=\max_n k_n \leq 1$.
We define the broken spaces
\begin{align*}
\PP^b(I) & = \bigoplus_{n=1}^N \PP_{q-1}(I_n) \subset L^2(I), \quad H^{1,b}(I) := \bigoplus_{n=1}^N H^1(I_n) \\
\PP^b(I;\hat\cH) &:=\PP^b(I)\otimes \hat\cH, \quad H^{1,b}(I;\hat\cH):=H^{1,b}\otimes \hat\cH,
\end{align*}
with $\hat\cH$ a  given Hilbert space.
For $U\in H^{1,b}(I;\hat\cH)$ and $n=1,\dots, N$ we will write $U^n=U|_{I_n}(t_n)$, $U^{n-1}_+=\lim_{t \downarrow t_{n-1}}U|_{I_n}(t)$. We also define $[U]^n = U^n_+ - U^n $ for $n=1,\dots, N-1$. We define $U'$ by taking the derivative on each interval $I_n$:\[
U' = \sum_{n=1}^N U|_{I_n}'\chi_{I_n},
\]
with $\chi_{I_n}$ the characteristic function for $I_n$.
We define the following bilinear form on $H^{1,b}(I; \hat\cH)$ which corresponds to a discrete time derivative:
\[
(Y,X)\mapsto D_{\hat\cH}(Y,X) := \sum_{n=1}^N \int_{I_n} ( Y' , X)_{\hat\cH} + \sum_{n=1}^{N-1} ( [Y]^n, X_+^n)_{\hat\cH} +( Y^0_+, X_+^0)_{\hat\cH} 
\]
and we define 
\[
(Y,X)\mapsto D_{\hat\cH}^*(Y,X) := \sum_{n=1}^N \int_{I_n} ( Y , X')_{\hat\cH} + \sum_{n=1}^{N-1} ( Y^n, [X]^n)_{\hat\cH}- ( Y^N, X^N)_{\hat\cH} .
\]
We will need some basic properties for $D_{\hat\cH}$ and $D_{\hat\cH}^*$. These results are standard and can be found in \cite[Chapter 12]{Thomee}.
\begin{lemma}\label{antiSym}  The following holds for any Hilbert space $\hat \cH$:
\begin{align}
D_{\hat \cH}(u,v) & = -D_{\hat \cH}^*(u,v) \quad \text{for all}~~u,v\in H^{1,b}(I;\hat \cH) \label{partialDM}\\
 D_{\hat \cH}(u,u) & \geq \frac12  \| u^{N}\|^2_{\hat \cH} \quad \text{for all}~~u\in H^{1,b}(I;\hat \cH).
\end{align}
\end{lemma}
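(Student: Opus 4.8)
The plan is to verify the two identities in Lemma~\ref{antiSym} directly from the definitions of $D_{\hat\cH}$ and $D_{\hat\cH}^*$, using only elementary manipulations (integration by parts on each subinterval and reindexing of telescoping sums).

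For the first identity \eqref{partialDM}, I would start from $D_{\hat\cH}(u,v)$ and apply integration by parts on each interval $I_n=(t_{n-1},t_n]$:
\[
\int_{I_n} (u',v)_{\hat\cH} = (u^n,v^n)_{\hat\cH} - (u^{n-1}_+,v^{n-1}_+)_{\hat\cH} - \int_{I_n} (u,v')_{\hat\cH}.
\]
Summing over $n$ and carefully collecting the boundary terms: the sum of $-\int_{I_n}(u,v')_{\hat\cH}$ gives the $-D_{\hat\cH}^*$ integral contribution; the remaining point evaluations, together with the jump terms $\sum_{n=1}^{N-1}([u]^n,v^n_+)_{\hat\cH}$ and the term $(u^0_+,v^0_+)_{\hat\cH}$ already present in $D_{\hat\cH}(u,v)$, must be regrouped. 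The key bookkeeping step is to write $[u]^n = u^n_+ - u^n$ and note that $[v]^n = v^n_+ - v^n$, then show that after cancellation exactly $-\sum_{n=1}^{N-1}(u^n,[v]^n)_{\hat\cH} + (u^N,v^N)_{\hat\cH}$ survives, which is precisely $-D_{\hat\cH}^*(u,v)$. This is the main (though still routine) obstacle: keeping track of which point values at $t_n$ appear with a $+$ (from the right subinterval) and which with a $-$ (from the left subinterval), and checking that the $n=0$ and $n=N$ endpoints match up correctly with the asymmetric definitions.

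For the second identity, I would set $v=u$ in $D_{\hat\cH}(u,u)$ and use $\int_{I_n}(u',u)_{\hat\cH} = \tfrac12\|u^n\|_{\hat\cH}^2 - \tfrac12\|u^{n-1}_+\|_{\hat\cH}^2$. Summing and inserting the jump and initial terms, one obtains
\[
D_{\hat\cH}(u,u) = \tfrac12\|u^N\|_{\hat\cH}^2 + \tfrac12\|u^0_+\|_{\hat\cH}^2 + \tfrac12\sum_{n=1}^{N-1}\bigl(\|u^n_+\|_{\hat\cH}^2 - 2(u^n,u^n_+)_{\hat\cH} + \|u^n\|_{\hat\cH}^2\bigr),
\]
where I have used $([u]^n,u^n_+)_{\hat\cH} = \|u^n_+\|_{\hat\cH}^2 - (u^n,u^n_+)_{\hat\cH}$ and combined it with the $-\tfrac12\|u^n_+\|_{\hat\cH}^2$ and $+\tfrac12\|u^n\|_{\hat\cH}^2$ pieces coming from consecutive intervals. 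Recognizing the bracketed sum as $\sum_{n=1}^{N-1}\|u^n_+ - u^n\|_{\hat\cH}^2 = \sum_{n=1}^{N-1}\|[u]^n\|_{\hat\cH}^2 \geq 0$ and dropping the nonnegative term $\tfrac12\|u^0_+\|_{\hat\cH}^2$ gives $D_{\hat\cH}(u,u)\geq \tfrac12\|u^N\|_{\hat\cH}^2$, as claimed. Since both statements are essentially standard (and the excerpt points to \cite[Chapter 12]{Thomee}), I expect no conceptual difficulty; the only care needed is the index bookkeeping in the telescoping, particularly at the first and last nodes where the definitions of $D_{\hat\cH}$ and $D_{\hat\cH}^*$ are deliberately asymmetric.
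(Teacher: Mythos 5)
Your proposal is correct and follows essentially the same route as the paper: identity \eqref{partialDM} by integration by parts on each $I_n$ with the telescoping/jump bookkeeping, and the lower bound by regrouping the terms into $\tfrac12\|u^N\|_{\hat\cH}^2+\tfrac12\|u^0_+\|_{\hat\cH}^2+\tfrac12\sum_{n=1}^{N-1}\|u^n-u^n_+\|_{\hat\cH}^2$ and dropping the nonnegative pieces. No gaps.
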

\begin{proof} 
Equation \eqref{partialDM} follows by integration by parts on each interval $I_n$. The second part follows from
\begin{equation*} \label{partIntTheta}
 \begin{split} 2D_{\hat\cH}(u,u) 
 =& \sum_{n=1}^N 2 \int_{I_n} ( u' , u)_{\hat\cH} + \sum_{n=1}^{N-1} 2 (  [u]^n, u_+^n )_{\hat\cH}  + 2( u^0_+ , u_+^0)_{\hat\cH}
\\ =&
\sum_{n=1}^{N} (\| u^n\|_{\hat\cH}^2 -  \| u^{n-1}_+\|_{\hat\cH}^2 ) + \sum_{n=1}^{N-1} 2 ( [u]^n, u_+^n )_{\hat\cH}  +2 \| u^{0 }_+\|_{\hat\cH}^2
\\ = &
\|u^N\|_{\hat\cH}^2+\|u^0_+\|_{\hat\cH}^2+ \sum_{n=1}^{N-1}\left( \| u^{n}\|_{\hat\cH}^2 -  \| u^{n}_+\|_{\hat\cH}^2 + 2 \| u^{n}_+\|_{\hat\cH}^2 - 2(u^n, u^n_+  )_{\hat\cH}\right) 
\\ = &
\| u^{N}\|_{\hat\cH}^2 + \| u^{0}_+\|_{\hat\cH}^2 + \sum_{n=0}^{N-1}\left( \| u^{n}\|_{\hat\cH}^2 + \| u^{n}_+\|_{\hat\cH}^2  - 2(u^n, u^n_+  )_{\hat\cH} \right)
\\ \geq &
\| u^{N}\|_{\hat\cH}^2 ,
\end{split}
\end{equation*}
where we have used $(u^n- u^n_+ ,u^n- u^n_+ )_{\hat\cH}\geq 0$ in the last step. 
\end{proof}
\ \\[1ex]
We now recall a projection operator that plays a key role in the error analyses presented in \cite{Thomee,Schwab}. This operator is \emph{not} used { explicitly} in a standard DG time discretization method applied to a parabolic problem without constraints. It will,  however, be used in the DG method for the discretization of the problem with constraints \eqref{problem}, that is introduced below.  

\begin{mydef} \label{defI} (as in \cite{Schwab})  {Let $J=(a,b)$. For a scalar function $\phi \in L^2(J)$ which is continuous at $t=b$ we define  its projection $\Pi_J^{q-1} \phi \in \cP_{q-1} (J)$ by the $q$ conditions
\[
  \Pi_J^{q-1}\phi(b)=\phi(b), \quad \int_a^b (\Pi_J^{q-1} \phi - \phi)\cdot\psi  = 0 \quad \forall~ \psi \in \cP_{q-2}(J).
\]
(for $q=1$ only the first condition is used). 
}

For $\phi \in L_c^2(I):=\{\, \psi\in L^2(I)~|~\psi_{|I_n}~~\text{is continuous at}~t_n,~1 \leq n \leq N\,\}$ we define  the projection $\cI_{\cP^b}:  L_c^2(I) \to \cP^b(I)$ by $(\cI_{\cP^b} \phi)_{|I_n}:=\Pi_{I_n}^{q-1}(\phi_{|{I_n}})$.
  Finally for any Hilbert space $\hat \cH$ and $u\in L^2_c(I;\hat \cH)= L_c^2(I)\otimes \hat \cH $, we define $\cI_q u := (\cI_{\cP^b} \otimes {\rm id}_{\hat\cH}) u$. 
\end{mydef}
 \ \\[1ex]
For the derivation of properties of this projection operator we refer to \cite{Schwab,Thomee}.
A useful characterization of the projection is given in the following lemma.
\begin{lemma}\cite[p. 207-208]{Thomee} \label{LemmaProj}
Let $w\in H^{1,b}(I;\hat\cH)$ for any separable  Hilbert space $\hat \cH$. {The solution $W \in \PP^b(I; \hat\cH)$ of 
\begin{eqnarray}\label{easyOdeDisc}
D_{\hat \cH}(W,X)  &=& D_{\hat \cH}(w,X) \quad \text{for all} ~~X\in \PP^b(I; \hat \cH)
\end{eqnarray}
fulfills $W=\cI_{q} w $.}
\end{lemma}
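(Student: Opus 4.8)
The plan is to show that the projection $\cI_q w$ satisfies the defining equation \eqref{easyOdeDisc}, and then to invoke uniqueness of the solution of that equation. Uniqueness is the easy part: \eqref{easyOdeDisc} is a square linear system on the finite-dimensional space $\PP^b(I;\hat\cH)$, and if $D_{\hat\cH}(W,X)=0$ for all $X\in\PP^b(I;\hat\cH)$, then taking $X=W$ and using the coercivity estimate $D_{\hat\cH}(W,W)\ge \tfrac12\|W^N\|_{\hat\cH}^2$ from Lemma~\ref{antiSym} gives $W^N=0$; since the equation decouples over the intervals $I_n$ when run backwards from $I_N$ to $I_1$ (the coupling terms $[W]^n$ only link $I_n$ to data at $t_n$ coming from $I_{n+1}$), one then peels off the intervals one at a time, at each step concluding that $W|_{I_n}\equiv 0$. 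Hence the system has a unique solution.

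It remains to verify that $W:=\cI_q w$ solves \eqref{easyOdeDisc}. Since both $D_{\hat\cH}(\cdot,X)$ and the right-hand side decouple over the intervals $I_n$ (again processing from $n=N$ down to $n=1$), and since the tensor-product structure $\cI_q = \cI_{\cP^b}\otimes\mathrm{id}_{\hat\cH}$ lets us reduce to scalar-valued $w$ by testing against $X$ of the form $\psi\otimes\xi$ with $\psi\in\PP_{q-1}(I_n)$ and $\xi\in\hat\cH$, the whole claim reduces to the following scalar, single-interval statement: for $J=(a,b)$, $\phi\in H^1(J)$, the polynomial $\Pi_J^{q-1}\phi\in\PP_{q-1}(J)$ satisfies
\[
\int_J (\Pi_J^{q-1}\phi)'\,\psi + \big(\Pi_J^{q-1}\phi(a) - \phi(a^-)\big)\,\psi(a)
= \int_J \phi'\,\psi + \big(\phi(a) - \phi(a^-)\big)\,\psi(a)
\]
for all $\psi\in\PP_{q-1}(J)$, where the "$a^-$" terms are the data carried over from the previous interval and cancel on both sides; so what must be shown is simply
\[
\int_J (\Pi_J^{q-1}\phi - \phi)'\,\psi + \big(\Pi_J^{q-1}\phi(a) - \phi(a)\big)\,\psi(a) = 0 \quad \forall~\psi\in\PP_{q-1}(J).
\]
Here I would integrate by parts on the left: $\int_J (\Pi_J^{q-1}\phi - \phi)'\psi = \big[(\Pi_J^{q-1}\phi-\phi)\psi\big]_a^b - \int_J (\Pi_J^{q-1}\phi-\phi)\psi'$. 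The boundary term at $b$ vanishes because $\Pi_J^{q-1}\phi(b)=\phi(b)$ by the first defining condition of $\Pi_J^{q-1}$; the boundary term at $a$ is exactly $-\big(\Pi_J^{q-1}\phi(a)-\phi(a)\big)\psi(a)$, which cancels the second term above; and the remaining integral $\int_J(\Pi_J^{q-1}\phi-\phi)\psi'$ vanishes because $\psi'\in\PP_{q-2}(J)$ and $\Pi_J^{q-1}\phi-\phi$ is $L^2(J)$-orthogonal to $\PP_{q-2}(J)$ by the second defining condition of $\Pi_J^{q-1}$. (For $q=1$ the integral terms with $\psi'$ and $\PP_{q-2}$ are simply absent and only the nodal condition at $b$ is used; the identity still holds.)

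The only place requiring a little care — the \textbf{main obstacle}, such as it is — is bookkeeping of the jump/coupling terms across the intervals: one must check that when \eqref{easyOdeDisc} is written out interval by interval, the "incoming" value at $t_{n-1}$ appearing in both $D_{\hat\cH}(W,X)$ and $D_{\hat\cH}(w,X)$ is in each case the \emph{same} quantity ($W^{n-1}=\cI_q w$ evaluated from $I_{n-1}$, resp.\ $w^{n-1}$), so that after moving everything to one side the problem genuinely localizes to the single scalar identity above on each $I_n$. This is purely organizational, not a genuine difficulty; once the localization is set up, the scalar computation is the two-line integration by parts sketched above. Assembling the interval-wise identities and the tensor-product reduction back together then gives \eqref{easyOdeDisc} for $W=\cI_q w$, and uniqueness finishes the proof.
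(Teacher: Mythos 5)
Your verification that $W:=\cI_q w$ satisfies \eqref{easyOdeDisc} is correct, and it is essentially the paper's computation run in the opposite direction: the paper localizes \eqref{easyOdeDisc} to each $I_n$ using the test functions $1$ and $(t-t_{n-1})^{k+1}$ and shows the resulting conditions are \emph{equivalent} to the two defining conditions of $\Pi_{I_n}^{q-1}$ (nodal exactness at $t_n$ plus $L^2$-orthogonality to $\PP_{q-2}(I_n)$), whereas you start from those two conditions and integrate by parts to recover the equation. Because the paper proves an equivalence, uniqueness of $W$ comes for free from uniqueness of the projection; because you prove only one implication, your argument genuinely needs the separate uniqueness step, and that is where the proposal has a gap.

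The uniqueness sketch as written does not go through. First, $\PP^b(I;\hat\cH)$ is not finite dimensional for general separable $\hat\cH$, so the ``square linear system'' framing requires the reduction to scalar components (which you only invoke later). More substantively, $2D_{\hat\cH}(W,W)$ equals $\|W^N\|_{\hat\cH}^2+\|W_+^0\|_{\hat\cH}^2+\sum_{n=1}^{N-1}\|[W]^n\|_{\hat\cH}^2$, so $D_{\hat\cH}(W,W)=0$ only forces the nodal and jump values to vanish, which for $q\ge 2$ is far from $W\equiv 0$. Also the decoupling runs \emph{forward}, not backward: testing with $X$ supported on $I_n$ produces the term $([W]^{n-1},X_+^{n-1})_{\hat\cH}$, which imports the terminal value $W^{n-1}$ from $I_{n-1}$, so the closed square system sits on $I_1$ and one must peel off $I_1,I_2,\dots,I_N$ in that order. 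Even with the correct ordering, each step requires showing that the homogeneous local system $\int_{I_n}(V',X)_{\hat\cH}+(V_+^{n-1},X_+^{n-1})_{\hat\cH}=0$ for all $X\in\PP_{q-1}(I_n;\hat\cH)$ has only the trivial solution: integration by parts (your own computation) shows such a $V$ satisfies $V^n=0$ and is $L^2(I_n)$-orthogonal to $\PP_{q-2}(I_n)$, and one must then observe that the degree-$(q-1)$ orthogonal polynomial on $I_n$ does not vanish at the endpoint $t_n$, whence $V=0$. None of this is difficult, but it is a genuinely missing piece; the cleanest repair is to upgrade your one-directional verification to the paper's two-way equivalence, which makes the separate uniqueness argument unnecessary.
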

\begin{proof}
By taking an orthogonal basis of $\hat \cH$ it suffices to establish this for the scalar version of \eqref{easyOdeDisc}, which  reads as follows: for given $f \in H^{1,b}(I)$ find $F\in \PP^b(I)$ such that for all $n=1,\dots, N$:
\begin{eqnarray}
\int_{I_n} (F'-f')\cdot 1 + (F-f)^{n-1}_+ - (F-f)^{n-1} &=& 0 \quad\text{with}~~(F-f)^0:=0 \label{ScalProjQ1} \\
\text{and}~~\int_{I_n} (F'-f')(t-t_{n-1})^{k+1}  &=&0 \label{ScalProjQ2}, \quad \text{for}~~k= 0,\dots, q-2.
\end{eqnarray}
After integration \eqref{ScalProjQ1} becomes \[
 (F - f)^n = (F - f)^{n-1} \quad\text{with}~~(F-f)^0:=0.
\]
From this we obtain $F(t_n) = f(t_n)$ for all $n=1,\dots,N$ by induction.
Integration by parts in \eqref{ScalProjQ2}, together with $F(t_n) = f(t_n)$ shows that \eqref{ScalProjQ2} is equivalent to \[
\int_{I_n} (F-f)\cdot p  =0 \quad \text{for all } p\in \cP_{q-2}(I_n).
\]
Hence $F=\cI_{\cP^b} f$, cf. Definition~\ref{defI}.
\end{proof}
\ \\[1ex]
We now describe the DG time discretizations of the constrained and mixed formulations \eqref{problem} and \eqref{problemmixed}, respectively.
We introduce the notation
\[
 K(u,v) :=D_\cH (u,v) + \int_I a(u, v),  \qquad u,v \in H^{1,b}(I;\cH) \cap L^2(I;\cU). 
\]
{\bf Time-discrete constrained formulation.}
The discrete (in time) version of \eqref{problem}  reads: find $U \in \PP^b(I;\cU)$ such that
\begin{align} 
K(U, X) & = \int_I f(X) + (u_0,X_+^0)_\cH \quad \text{for all}~~X \in \PP^b(I;\cV) \label{Divp} \\
\int_I b(U,R) & = \int_I ( \cI_q g)(R)    \quad \text{for all}~~R \in \PP^b(I;\cQ).  \label{Divp1}
\end{align}
{\bf Time-discrete mixed formulation.}
The discrete (in time) version of \eqref{problemmixed}  reads: find $U \in \PP^b(I;\cU)$, $P\in \PP^b(I;\cQ_1)$ such that
\begin{align} 
K(U, X) + \int_I b_1(X,P)& = \int_I f(X) + (u_0,X_+^0)_\cH \quad \text{for all}~~X \in \PP^b(I;\cV_2) \label{Divpmixed} \\
\int_I b(U,R) & = \int_I ( \cI_q g)(R)    \quad \text{for all}~~R \in \PP^b(I;\cQ).  \label{Divp1mixed}
\end{align}
\emph{Note that in  the discretization of the constraints the projection operator $\cI_q$ is used}, cf. Remark~\ref{remprojection} below. If $B$ corresponds to a trace operator (boundary condition) this is the usual treatment of an inhomogeneous Dirichlet boundary {condition
, }\emph{but with the Dirichlet data projected in a suitable manner in time by means of} $\cI_q$. In case of a Stokes problem with a homogeneous Dirichlet boundary condition, the condition in \eqref{Divp1mixed} is the divergence constraint with \emph{projected data} $\cI_q g$. 
\begin{remark} \label{remprojection}
 \rm The formulations of the constraint equations in \eqref{Divp1} and \eqref{Divp1mixed} are given in variational form because this is closest to the actual inplementation of the method, cf. Subsection~\ref{sectdiscrete}. For the analysis below it is useful to introduce an equivalent operator formulation. In operator form the equations \eqref{Divp1} and \eqref{Divp1mixed} are equivalent to $BU=\cI_q g$ in $\PP^b(I;\cQ')$. Due to the tensor product structure of $\cI_q$ and the facts that $B$ does not depend on $t$ and $U\in  \PP^b(I;\cU)$ we get $BU=B\cI_qU=\cI_qBU$. Hence the constraint conditions \eqref{Divp1} and \eqref{Divp1mixed} have the equivalent operator formulation
 \[
  \cI_q(BU-g)=0 \quad  \text{in}~~ \PP^b(I;\cQ').
 \]
In the error analysis we use this more compact representation of the discrete constraints. In the mixed formulation,  for the case $b_2 \neq 0$, cf. Example~\ref{example3}, it is natural to split the constraint equation \eqref{Divp1mixed} in  two parts:
\begin{align}
 \int_I b_1(U,R)= b(U,(R,0)) & = \int_I ( \cI_q g_1)(R)    \quad \text{for all}~~R \in \PP^b(I;\cQ_1), \label{split1A}\\
\int_I b_2(U,R)= b(U,(0,R)) & = \int_I ( \cI_q g_2)(R)    \quad \text{for all}~~R \in \PP^b(I;\cQ_2), \label{split2A}
\end{align}
where $g=(g_1,g_2)$.
Note that in \eqref{split1A} the same bilinear form $b_1(\cdot,\cdot)$ and Lagrange multiplier space $\PP^b(I;\cQ_1)$ as in \eqref{Divpmixed} are used. 
\end{remark}
\ \\  
In the following lemma we derive consistency and well-posedness of these discretizations.  
\begin{lemma}\label{LemmaDivp}
Assume that the solution $u$ of \eqref{problem} has regularity $u \in H^1(I;\cH)$ and assume that $g\in L^2_c(I; \cQ')$. The time-discrete constrained and mixed formulations  \eqref{Divp}-\eqref{Divp1}  and \eqref{Divpmixed}-\eqref{Divp1mixed} are consistent, i.e.: 
\begin{align} \label{consis1}
K(u,X)& = \int_I f(X) + (u_0,X_+^0)_\cH \quad \text{for all}~~X\in \PP^b(I;\cV) \\
  \cI_q(Bu-g) & = 0 , \label{consis2}
\end{align}
and
\begin{align} \label{consis1mixed}
K(u, X) + \int_I b_1(X,p)& = \int_I f(X) + (u_0,X_+^0)_\cH \quad \text{for all}~~X \in \PP^b(I;\cV_2) \\
  \cI_q(Bu-g) & = 0  \label{consis2mixed}
\end{align}
hold. 
For $f\in L^2(I;\cV')$ the time-discrete  problem \eqref{Divp}-\eqref{Divp1} has a unique solution $U$. If $f$ has regularity $f\in L^2(I;\cV_2')$ then there is a unique $P\in  \PP^b(I;\cQ_1)$ such that with the unique solution $U$ of \eqref{Divp}-\eqref{Divp1} the pair $(U,P)$ is the unique solution of the time-discrete mixed  problem \eqref{Divpmixed}-\eqref{Divp1mixed}.
\end{lemma}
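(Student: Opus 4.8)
The plan is to establish first the two consistency identities and then the two well-posedness claims. For \eqref{consis1} and \eqref{consis1mixed}, the crucial point is that the regularity assumption $u\in H^1(I;\cH)$ makes $u$ continuous on $\overline I$ with values in $\cH$, so that every jump term $[u]^n$ in $D_\cH(u,\cdot)$ vanishes and $u^0_+=u(0)=u_0$; hence $D_\cH(u,X)=\int_I(u',X)_\cH+(u_0,X_+^0)_\cH$ for all $X\in\PP^b(I;\cH)$. For $X\in\PP^b(I;\cV)$, using the Gelfand-triple identification to write $(u',X)_\cH=u'(X)$ and adding $\int_I a(u,X)=\int_I(Au)(X)$, the first equation of \eqref{problem} (valid in $L^2(I;\cV')$) gives \eqref{consis1}. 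For the mixed case I would first note that $u\in H^1(I;\cH)$ implies $u\in W^1(\cU;\cV_2')$ since $\cH\hookrightarrow\cU'\hookrightarrow\cV_2'$; the discussion following \eqref{problemmixed} then provides a unique $p\in L^2(I;\cQ_1)$ with $u'+Au+B_1'p=f$ in $L^2(I;\cV_2')$, and testing with $X\in\PP^b(I;\cV_2)$, together with $(B_1'p)(X)=b_1(X,p)$ on $\cV_2$, yields \eqref{consis1mixed}. Finally \eqref{consis2} and \eqref{consis2mixed} are immediate: $Bu=g$ in $L^2(I;\cQ')$, so $Bu-g=0$, and since $g\in L^2_c(I;\cQ')$ the projection is defined and $\cI_q(Bu-g)=0$.

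For well-posedness of \eqref{Divp}--\eqref{Divp1} I would pass to the operator form $BU=\cI_q g$ of the constraint (Remark~\ref{remprojection}). By \eqref{assumpB}, $B$ restricts to an isomorphism $\cV^\perp\to\cQ'$, so applying its inverse modewise in time produces a unique $\tilde U\in\PP^b(I;\cV^\perp)\subset\PP^b(I;\cU)$ with $B\tilde U=\cI_q g$; writing $U=\tilde U+U_0$, the constraint is precisely the requirement $U_0\in\PP^b(I;\cV)$, and \eqref{Divp} reduces to the problem of finding $U_0\in\PP^b(I;\cV)$ with $K(U_0,X)=\int_I f(X)+(u_0,X_+^0)_\cH-K(\tilde U,X)$ for all $X\in\PP^b(I;\cV)$. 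On $\PP^b(I;\cV)$ equipped with the $L^2(I;\cU)$-norm the right-hand side is a bounded functional (here $f\in L^2(I;\cV')$ is used), $K$ is bounded (inverse estimates in time for the $D_\cH$-part and \eqref{cont} for the $a$-part), and $K$ is coercive: Lemma~\ref{antiSym} gives $D_\cH(X,X)\ge\frac12\|X^N\|_\cH^2\ge0$ while \eqref{ellip} gives $\int_I a(X,X)\ge\gamma\|X\|_{L^2(I;\cU)}^2$. Lax--Milgram then yields a unique $U_0$, hence a unique $U$.

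For the mixed problem, \eqref{Divp1mixed} coincides with \eqref{Divp1}, so the $U$ just constructed already satisfies it, and it remains to find a unique $P\in\PP^b(I;\cQ_1)$ solving \eqref{Divpmixed}. I would write \eqref{Divpmixed} as $\int_I b_1(X,P)=G(X)$ with $G(X):=\int_I f(X)+(u_0,X_+^0)_\cH-K(U,X)$, a bounded functional on $\PP^b(I;\cV_2)$ (this is where $f\in L^2(I;\cV_2')$ enters), and observe that $G$ vanishes on $\PP^b(I;\cV)\subset\PP^b(I;\cV_2)$ by \eqref{Divp}. Expanding $G$ in an $L^2(I)$-orthonormal basis of $\PP^b(I)$, each time mode of $G$ lies in $\cV_2^0$, and since $B_1':\cQ_1\to\cV_2^0$ is an isomorphism by \eqref{iso1}--\eqref{iso2}, each time mode of $P$ is uniquely determined; reassembling $P$ and using that $b_1$ does not depend on $t$ gives $\int_I b_1(X,P)=G(X)$, with uniqueness inherited from that of the modes. (Equivalently, one tensors the isomorphism \eqref{iso1} with the identity in time.)

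The argument is routine throughout; the points that need a little care are the boundedness of $D_\cH$ on $\PP^b(I;\cV)$ in the $L^2(I;\cU)$-norm, which rests on inverse inequalities in time on each $I_n$, and the structural observation that the discrete constraint $BU=\cI_q g$ exactly fixes the $\PP^b(I;\cV^\perp)$-component of $U$, so that the splitting $U=\tilde U+U_0$ turns the saddle-point system into a coercive problem on $\PP^b(I;\cV)$ — which is where the inf-sup assumption \eqref{assumpB} is essential.
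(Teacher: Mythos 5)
Your proof is correct and follows essentially the same route as the paper: a shift/lifting argument reducing the constrained problem to a coercive (Lax--Milgram) problem on $\PP^b(I;\cV)$, followed by the isomorphism $B_1':\cQ_1\to\cV_2^0$ applied modewise in time to recover $P$. The only difference is cosmetic — you lift the already-projected data $\cI_q g$ via $B|_{\cV^\perp}^{-1}$, whereas the paper lifts $g$ to some $v$ with $Bv=g$ and then sets $V=\cI_q v$, using the commutation $B\cI_q=\cI_q B$; both yield the same reduction.
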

\begin{proof}
The consistency properties are evident (note that due to continuity in time of $u$ the jump terms in $D_\cH(\cdot,\cdot)$ vanish). Existence of a discrete solution of \eqref{Divp}-\eqref{Divp1} is proved by a standard shift argument as follows. Using that  $B:\cU\rightarrow \cQ'$ is surjective, we find a $v \in L^2_c(I;\cU)$ which satisfies the constraint $Bv=g$. Take $V:=\cI_q v \in \PP^b(I;\cU)$. Due to ellipticity of $a(\cdot,\cdot)$ on $\cV$ and $D_\cH(X,X) \geq 0$ for all $X \in \PP^b(I;\cU)$ the variational problem: find $W \in \PP^b(I;\cV)$ such that
\[
  K(W,X)= \int_I f(X) + (u_0,X_+^0)_\cH - K(V,X) \quad \text{for all}~~X\in \PP^b(I;\cV) 
\]
has a unique solution. Take $U:=W+V \in  \PP^b(I;\cU)$. Then $U$ satisfies \eqref{Divp} and we have
\[
  \cI_q(BU-g)=\cI_q(BV-g)=\cI_q(B\cI_q v-g)= \cI_q(\cI_q B v-g)= \cI_q(Bv-g)=0,
\]
hence also the discrete constraint is satisfied. Suppose that there are two solutions $U_1, U_2$ of \eqref{Divp}-\eqref{Divp1}. Then $BU_i=B(\cI_q U_i)=\cI_q BU_i= \cI_q g$ holds, hence $U_1-U_2 \in L^2(I;\cV)$. From the ellipticity of $K(\cdot,\cdot)$ on $\PP^b(I;\cV)$ we get $U_1=U_2$.

It remains to prove that \eqref{Divpmixed}-\eqref{Divp1mixed} has a unique solution. If $(U,P)$ solves \eqref{Divpmixed}-\eqref{Divp1mixed}, then $U$ solves \eqref{Divp}-\eqref{Divp1}. Conversely, if $U$ solves \eqref{Divp}-\eqref{Divp1}, then $(U,P)$ solves \eqref{Divpmixed}-\eqref{Divp1mixed} if and only if 
\[
B'_1 P (X) =   \int_I f(X) + (u_0,X_+^0)_\cH - K(U,X) \quad \text{for all}~~X\in \PP^b(I;\cV_2) 
\]
holds. Since the right hand side vanishes for all $X\in \PP^b(I;\cV)$ and $B'_1:\cQ_1 \mapsto \cV_2^0$ is an isomorphism, there exists a unique $P\in \PP^b(I;\cQ_1)$ which satisfies this relationship.
\end{proof}
\ \\
\begin{remark} \rm In the analysis below we will see that the projection operator $\cI_q$ used in the time-discrete constraints \eqref{Divp1} and \eqref{Divp1mixed} is essential for optimal order convergence results. Applying Lemma \ref{LemmaProj} to \eqref{Divp1} (or \eqref{Divp1mixed}) shows that \eqref{Divp1} is equivalent to $D_{\cQ'} BU = D_{\cQ'} g$, which is the discrete  analogon of $Bu'=g'$ with $Bu(0)=g(0)$. The discretization of the constraint by $D_{\cQ'} BU = D_{\cQ'} g$ is very similar to the index reduction approach used  in  \cite{AltmannZimmer} for obtaining accurate Runge-Kutta discretizations of DAEs with differentiation index 2 (such as the Stokes equation),  cf. equation (3.3c) in \cite{AltmannZimmer}. {The projection operator $\cI_q$ is also used in \cite{Walkington2014} to obtain optimal error bounds for the wave equation with non-homogeneous boundary conditions.}
 \end{remark}

\section{Error analysis for time-discrete constrained formulation} \label{SectionErrorAnalysis}
In this section we derive optimal discretization error bounds for the discrete constrained formulation \eqref{Divp}-\eqref{Divp1}. We first derive global bounds in the energy norm and then give a superconvergence result.
\subsection{Optimal global error bounds} \label{sectglobal}
We apply a standard argument as in \cite{Schwab} and show that the discretization error can be bounded by the projection error. For this projection there are error bounds available.
\begin{theorem} \label{thmdiscr1}
Assume that the solution $u$ of \eqref{problem} has regularity $u \in L^2_c(I;\cU)\cap H^1(I;\cH)$ and let $U\in\PP^b(I;\cU)$ be the solution of \eqref{Divp}-\eqref{Divp1}. The following holds:
\begin{equation} \label{Ceabound}
 \|u(T)-U(T)\|_\cH + \|u-U\|_{L^2(I;\cU)} \leq (1+\sqrt{2} c_\gamma \Gamma)\|u-\cI_q u\|_{L^2(I;\cU)},
\end{equation}
with $c_\gamma:= \max \{\frac{1}{\gamma}, 1\}$.
\end{theorem}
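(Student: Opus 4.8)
The plan is to follow the standard DG error-splitting argument, the one genuinely new point being that the modified treatment of the constraint forces the discrete error into the kernel space $\cV$, which is where ellipticity is available. Write $u-U=\rho+\theta$ with $\rho:=u-\cI_q u$ the projection error and $\theta:=\cI_q u-U\in\PP^b(I;\cU)$ (here $u\in L^2_c(I;\cU)$ guarantees $\cI_q u$ is defined). Using the consistency statement in Lemma~\ref{LemmaDivp} and subtracting \eqref{Divp}--\eqref{Divp1}, one gets $K(u-U,X)=0$ for all $X\in\PP^b(I;\cV)$ together with $\cI_q\big(B(u-U)\big)=0$. The first thing I would verify is that $\theta\in\PP^b(I;\cV)$: since $B$ does not depend on $t$ it maps $\PP^b(I;\cU)$ into $\PP^b(I;\cQ')$ and commutes with the tensor-product operator $\cI_q$, which is itself idempotent. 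Hence $\cI_q B\theta=B\cI_q\theta=B\theta$, while $\cI_q B\rho=\cI_q Bu-\cI_q B\cI_q u=\cI_q Bu-\cI_q Bu=0$; consequently $0=\cI_q\big(B(u-U)\big)=B\theta$, i.e.\ $\theta(t)\in\ker B=\cV$ for all $t$.

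Next I would write the error equation $K(\theta,X)=-K(\rho,X)$ for $X\in\PP^b(I;\cV)$, obtained from $K(u-U,X)=0$ and $u-U=\rho+\theta$. Applying Lemma~\ref{LemmaProj} with $w=u\in H^1(I;\cH)\subset H^{1,b}(I;\cH)$ gives $D_\cH(\cI_q u,X)=D_\cH(u,X)$, hence $D_\cH(\rho,X)=0$ for every $X\in\PP^b(I;\cH)$; so $K(\rho,X)=\int_I a(\rho,X)$ and the error equation reduces to $K(\theta,X)=-\int_I a(\rho,X)$. Now test with $X=\theta$, which is admissible because $\theta\in\PP^b(I;\cV)$. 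On the left, Lemma~\ref{antiSym} and the ellipticity \eqref{ellip} (applicable since $\theta(t)\in\cV$) give $K(\theta,\theta)=D_\cH(\theta,\theta)+\int_I a(\theta,\theta)\geq\tfrac12\|\theta^N\|_\cH^2+\gamma\|\theta\|_{L^2(I;\cU)}^2$. On the right, continuity \eqref{cont} and the Cauchy--Schwarz inequality in time bound $-\int_I a(\rho,\theta)$ by $\Gamma\|\rho\|_{L^2(I;\cU)}\|\theta\|_{L^2(I;\cU)}$. A Young inequality then yields $\|\theta^N\|_\cH^2+\gamma\|\theta\|_{L^2(I;\cU)}^2\leq\frac{\Gamma^2}{\gamma}\|\rho\|_{L^2(I;\cU)}^2$.

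Finally I would assemble \eqref{Ceabound}. Since the projection interpolates at the right endpoint $t_N=T$, one has $\theta^N=(\cI_q u)(T)-U(T)=u(T)-U(T)$. From the last inequality, treating the cases $\gamma\leq 1$ and $\gamma\geq 1$ separately (in each case absorbing the weaker left-hand term into the other), one obtains $\|u(T)-U(T)\|_\cH+\|\theta\|_{L^2(I;\cU)}\leq\sqrt2\,c_\gamma\Gamma\,\|\rho\|_{L^2(I;\cU)}$ with $c_\gamma=\max\{1/\gamma,1\}$; combined with $\|u-U\|_{L^2(I;\cU)}\leq\|\rho\|_{L^2(I;\cU)}+\|\theta\|_{L^2(I;\cU)}$ this gives the asserted bound $(1+\sqrt2\,c_\gamma\Gamma)\|u-\cI_q u\|_{L^2(I;\cU)}$. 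The step carrying the real content is $\theta\in\PP^b(I;\cV)$: it is exactly what the projection $\cI_q$ in the discrete constraint \eqref{Divp1} buys, and it fails for the unmodified scheme (where the constraint $BU=g$, without $\cI_q g$, leaves $B\theta$ nonzero and thus blocks both the use of \eqref{ellip} and the cancellation $D_\cH(\rho,\cdot)=0$). Everything else is a routine energy estimate.
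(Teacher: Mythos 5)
Your proof is correct and takes essentially the same route as the paper's: your $\theta$ is the paper's $E=U-\cI_q u$ up to sign, the key step $B\theta=0$ (so that $\theta$ is an admissible test function in \eqref{Divp}) is exactly the paper's computation \eqref{crucial}, the cancellation $D_\cH(u-\cI_q u,\cdot)=0$ via Lemma~\ref{LemmaProj} is \eqref{crucialA}, and the energy/Young argument plus $u(T)=(\cI_q u)(T)$ yields the same constant $\sqrt{2}\,c_\gamma\Gamma$. Nothing to add.
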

\begin{proof}
 Define $E:=U-\cI_q u$. Note that
\begin{equation} \label{crucial}
\begin{split}
  BE & =BU-B(\cI_q u)=B(\cI_q U)-B(\cI_q u) \\ & = \cI_q(BU-Bu)=\cI_q(BU-g)=0.
\end{split}
\end{equation}
Hence $E \in \PP^b(I;\cV)$ can be used as a test function in \eqref{Divp}. This then yields, using the ellipticity of $a(\cdot,\cdot)$ on $\cV$ and $D_\cH(E,E) \geq \frac12 \|E(T)\|_\cH^2$, with $c_\gamma = \max \{\frac{1}{\gamma}, 1\}$:
\begin{align*}
\frac12 \|E(T)\|_\cH^2 +  \|E\|_{L^2(I;\cU)}^2 &  \leq c_\gamma K(E,E) = c_\gamma K(u-\cI_q u, E) \\
 & =c_\gamma \big( D_\cH( u-\cI_q u, E) +\int_I a( u-\cI_q u, E) \big). 
\end{align*}
Using \eqref{easyOdeDisc} we get  
\begin{equation} \label{crucialA}
D_\cH( u-\cI_q u, E)=0,
\end{equation} 
 and thus
\begin{align*}
\frac12 \|E(T)\|_\cH^2 +  \|E\|_{L^2(I;\cU)}^2 & \leq c_\gamma \Gamma \|E\|_{L^2(I;\cU)}\|u-\cI_q u\|_{L^2(I;\cU)}
\\ & \leq \frac12 c_\gamma^2 \Gamma^2\|u-\cI_q u\|_{L^2(I;\cU)}^2 +\frac12 \|E\|_{L^2(I;\cU)} ^2
\end{align*}
holds.  This yields $\|E(T)\|_\cH +  \|E\|_{L^2(I;\cU)} \leq \sqrt{2} c_\gamma \Gamma \|u-\cI_q u\|_{L^2(I;\cU)}$. Using this, a triangle inequality and $u(T)- (\cI_q u)(T)=0$ we obtain the result \eqref{Ceabound}.
\end{proof}
\ \\
\begin{remark} \rm \label{RemarkSubOptIq}
 For this analysis to work and the projection error bound \eqref{Ceabound} to hold it is crucial, cf.  \eqref{crucial} that in the discretization of the constraint we use $BU=\cI_q g$ and not $BU=g$ or some other interpolation (in time) of the data $g$. In Subsection \ref{NumOne} we will present a numerical experiment where the use of $BU=g$ instead of $BU=\cI_q g$ leads to sub-optimal results. 
\end{remark}
\ \\[1ex]
Results for the projection error $\|u-\cI_q u\|_{L^2(I;\cU)}$ are known in the literature \cite[Theorem 3.10]{Schwab}. Using these results we obtain the following optimal discretization error bound.
\begin{theorem} \label{mainthm1}
Let  $U\in\PP^b(I;\cU)$ be the solution of \eqref{Divp}-\eqref{Divp1}, and assume that the solution $u$ of \eqref{problem} has smoothness $u \in  H^m(I;\cU)$ for an $m$ with $1 \leq m \leq q$. The following holds:
\begin{align}
 \|u(T)-U(T)\|_\cH + \|u-U\|_{L^2(I;\cU)} & \leq c  \left(\sum_{n=1}^N k_n^{2m} \|u^{(m)}\|_{L^2(I_n;\cU)}^2\right)^{\frac{1}{2}} \label{CeaboundA}
\\& \leq c k^m \|u^{(m)}\|_{L^2(I;\cU)}
\end{align}
for some $c>0$ which only depends on $q,\gamma$ and $\Gamma$.
\end{theorem}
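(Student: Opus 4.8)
The plan is to combine the abstract bound of Theorem~\ref{thmdiscr1} with known approximation estimates for the projection operator $\cI_q$. By Theorem~\ref{thmdiscr1} we already have
\[
 \|u(T)-U(T)\|_\cH + \|u-U\|_{L^2(I;\cU)} \leq (1+\sqrt 2\, c_\gamma \Gamma)\,\|u-\cI_q u\|_{L^2(I;\cU)},
\]
so the entire task reduces to estimating the projection error $\|u-\cI_q u\|_{L^2(I;\cU)}$ in terms of the time regularity of $u$. The main point is that $\cI_q$ acts only in the time variable (it is the tensor product $\cI_{\cP^b}\otimes \mathrm{id}_{\cU}$), so the $\cU$-norm simply rides along and one only needs a scalar-in-time approximation result applied coefficientwise, or equivalently, applied to the $\cU$-valued function directly.

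First I would recall, from \cite[Theorem 3.10]{Schwab} (or the analogous estimate in \cite{Thomee}), that on a single interval $I_n$ the local projection $\Pi_{I_n}^{q-1}$ satisfies, for $w\in H^m(I_n;\cU)$ with $1\le m\le q$,
\[
 \|w-\Pi_{I_n}^{q-1} w\|_{L^2(I_n;\cU)} \leq c\, k_n^{m}\, \|w^{(m)}\|_{L^2(I_n;\cU)},
\]
with $c$ depending only on $q$. This is the standard scaling argument: map $I_n$ to a reference interval, use that $\Pi^{q-1}$ reproduces polynomials of degree $q-1$ together with a Bramble--Hilbert/Deny--Lions argument on the reference interval, and scale back, each time derivative producing one factor of $k_n$. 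Since $\cI_q u$ restricted to $I_n$ is exactly $\Pi_{I_n}^{q-1}(u|_{I_n})$ by Definition~\ref{defI}, summing the squares of these local bounds over $n=1,\dots,N$ gives
\[
 \|u-\cI_q u\|_{L^2(I;\cU)}^2 = \sum_{n=1}^N \|u-\Pi_{I_n}^{q-1} u\|_{L^2(I_n;\cU)}^2 \leq c^2 \sum_{n=1}^N k_n^{2m}\,\|u^{(m)}\|_{L^2(I_n;\cU)}^2.
\]
Substituting this into the Theorem~\ref{thmdiscr1} bound and absorbing the constant $1+\sqrt 2\, c_\gamma\Gamma$ into a new constant $c$ depending only on $q,\gamma,\Gamma$ yields the first inequality \eqref{CeaboundA}.

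For the second inequality one just bounds $k_n\le k$ for every $n$ and pulls $k^{2m}$ out of the sum, using $\sum_{n=1}^N \|u^{(m)}\|_{L^2(I_n;\cU)}^2 = \|u^{(m)}\|_{L^2(I;\cU)}^2$, which gives $c\,k^m\|u^{(m)}\|_{L^2(I;\cU)}$. I do not expect any serious obstacle here: the only nontrivial ingredient is the local projection estimate, and that is precisely what is quoted from \cite{Schwab,Thomee}. The one thing to be careful about is that the reference-interval constant in the Bramble--Hilbert step must be independent of $n$ (hence of $k_n$), which is exactly what the scaling argument delivers; the dependence on $q$ is harmless since $q$ is fixed. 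Thus the proof is essentially a two-line assembly: invoke Theorem~\ref{thmdiscr1}, then invoke the cited projection bound and sum over the time intervals.
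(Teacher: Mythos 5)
Your proposal is correct and follows essentially the same route as the paper: the paper likewise obtains \eqref{CeaboundA} by combining the quasi-optimality bound of Theorem~\ref{thmdiscr1} with the known interval-wise projection error estimate for $\cI_q$ from \cite[Theorem 3.10]{Schwab}, summed over the time intervals. The extra detail you give on the scaling/Bramble--Hilbert derivation of the local projection bound is sound but is simply what the cited reference supplies.
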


\begin{remark} 
\rm
Typically one is interested in the case $q=m$. The other cases will however also be of some use in what follows. For the homogeneous case $Bu=0$ the result \eqref{CeaboundA} is the same as the one in \cite[Theorem 12.1]{Thomee}.

In applications the constraint $Bu=g$ may be incompatible this the initial condition $u(0)=u_0$. This leads to low regularity of  the solution $u$ at $t=0$, i.e. $\|u^{(m)}\|_{L^2(I_n;\cU)}^2$ is unbounded for larger $m$ and small $n$. In such a setting the step size $k_n$ in \eqref{CeaboundA} needs to be taken sufficiently small in the initial steps. 
In the specific case of Example \ref{Exheat} and Example \ref{example3} this is the case if the Dirichlet boundary condition is incompatible with the initial condition, cf. \cite{Schwab}.
\end{remark}
\ \\[1ex]
We also derive a bound for the error in the time derivative, which will be used in the analysis of superconvergence.
\begin{theorem} 
Assume that the solution $u$ of \eqref{problem} has regularity $u \in H^1(I;\cH)$ and assume that $g\in L^2_c(I; \cQ')$. Let  $U\in\PP^b(I;\cU)$ be the solution of \eqref{Divp}-\eqref{Divp1}. The following holds:
\begin{equation}
\|u' - U'\|_{L^2(I;\cV')}  \leq c   \left(\|u'-(\cI_q u)'\|_{L^2(I;\cV')} + \|u-U\|_{L^2(I;\cU)} \right) \label{resA}
\end{equation}
for some $c>0$ which only depends on $q$ and $\Gamma$.
If we additionally assume that $u$ has smoothness $u \in  H^{m-1}(I;\cU)\cap H^{m}(I;\cV')$ for an $m$ with $2 \leq m \leq q$, then the following holds:
\begin{align}
\|u'-U'\|_{L^2(I;\cV')}  &\leq c' \left(\sum_{n=1}^Nk_n^{2m-2} (\|u^{(m)}\|_{L^2(I_n;\cV')}^2+\|u^{(m-1)}\|_{L^2(I_n;\cU)}^2) )\right)^{\frac{1}{2}}\nonumber
\\ & \leq c' k^{m-1} (\|u^{(m)}\|_{L^2(I;\cV')}+\|u^{(m-1)}\|_{L^2(I;\cU)}),\label{CeaboundB}
\end{align}
for some $c'>0$ which only depends on $q, \gamma$ and $\Gamma$.
\end{theorem}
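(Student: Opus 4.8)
The plan is to reduce the estimate to a negative‑norm bound on $E':=(U-\cI_q u)'$ and to extract that bound from the discrete equation by a duality argument with a carefully engineered test function. Set $E:=U-\cI_q u$; as in the proof of Theorem~\ref{thmdiscr1} (see \eqref{crucial}) one has $BE=0$, so $E\in\PP^b(I;\cV)$, and the triangle inequality
\[
\|u'-U'\|_{L^2(I;\cV')}\le\|u'-(\cI_q u)'\|_{L^2(I;\cV')}+\|E'\|_{L^2(I;\cV')}
\]
reduces \eqref{resA} to showing $\|E'\|_{L^2(I;\cV')}\le c\,\|u-U\|_{L^2(I;\cU)}$. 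Exactly as in the proof of Theorem~\ref{thmdiscr1}, subtracting $K(\cI_q u,\cdot)$ from \eqref{Divp}, using $D_\cH(\cI_q u,X)=D_\cH(u,X)$ (Lemma~\ref{LemmaProj}) and the consistency \eqref{consis1}, yields the Galerkin identity
\[
D_\cH(E,X)+\int_I a(E,X)=\int_I a(u-\cI_q u,X)\qquad\text{for all }X\in\PP^b(I;\cV).
\]

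Since $E'(t)\in\cV\subset\cH$, duality gives $\|E'\|_{L^2(I;\cV')}=\sup\{\int_I(E',\phi)_\cH:\phi\in L^2(I;\cV),\ \|\phi\|_{L^2(I;\cU)}\le1\}$. The crucial step is, for a fixed such $\phi$, to construct a discrete test function $X\in\PP^b(I;\cV)$ satisfying on each $I_n$ the $q$ conditions
\[
X|_{I_n}(t_{n-1})=0,\qquad \int_{I_n}(X-\phi,\psi)_\cH=0\quad\text{for all }\psi\in\PP_{q-2}(I_n;\cV).
\]
A dimension count ($q$ degrees of freedom on $I_n$: one spent on the endpoint condition, $q-1$ on the moment conditions) plus scaling to the reference interval shows that such $X$ exists, is unique, and is mesh‑independently stable, $\|X\|_{L^2(I;\cU)}\le C_q\|\phi\|_{L^2(I;\cU)}$. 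Because $X$ vanishes at every left endpoint, all jump and initial terms in $D_\cH(E,X)$ drop out, and because $E'|_{I_n}\in\PP_{q-2}(I_n;\cV)$ the moment condition gives $D_\cH(E,X)=\int_I(E',\phi)_\cH$. Substituting $X$ into the Galerkin identity and using $u-\cI_q u-E=u-U$,
\[
\int_I(E',\phi)_\cH=\int_I a(u-U,X)\le\Gamma\|u-U\|_{L^2(I;\cU)}\|X\|_{L^2(I;\cU)}\le\Gamma C_q\|u-U\|_{L^2(I;\cU)}\|\phi\|_{L^2(I;\cU)}.
\]
Taking the supremum over $\phi$ proves $\|E'\|_{L^2(I;\cV')}\le\Gamma C_q\|u-U\|_{L^2(I;\cU)}$, hence \eqref{resA}.

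For \eqref{CeaboundB} I would insert known bounds into \eqref{resA} interval by interval. The projection‑error estimate for the time derivative (of the same type as the ones behind Theorem~\ref{mainthm1}, cf.~\cite{Schwab,Thomee}) gives $\|u'-(\cI_q u)'\|_{L^2(I_n;\cV')}\le c\,k_n^{m-1}\|u^{(m)}\|_{L^2(I_n;\cV')}$ for $2\le m\le q$, and applying Theorem~\ref{mainthm1} with smoothness index $m-1$ (admissible since $1\le m-1\le q$ and $u\in H^{m-1}(I;\cU)$) gives $\|u-U\|_{L^2(I;\cU)}\le c\,(\sum_n k_n^{2m-2}\|u^{(m-1)}\|_{L^2(I_n;\cU)}^2)^{1/2}$. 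Adding the squared contributions over $n$ and pulling $k=\max_n k_n$ out of the sum yields both the interval‑wise and the global bounds in \eqref{CeaboundB}.

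The one genuinely delicate point is the construction of $X$: it must simultaneously annihilate the jump and initial contributions of $D_\cH(E,\cdot)$ (which forces the homogeneous left‑endpoint condition) and still reproduce $\phi$ against $\PP_{q-2}(I_n;\cV)$ accurately enough to recover $\int_I(E',\phi)_\cH$ exactly, while its $L^2(I;\cU)$‑stability constant remains independent of the step sizes $k_n$; this is where the reference‑interval scaling and a small non‑degeneracy check (the linear map encoding the $q$ conditions is invertible on $\PP_{q-1}(I_n;\cV)$) are needed. Everything else — the Galerkin orthogonality, the telescoping $\int_I a((u-\cI_q u)-E,X)=\int_I a(u-U,X)$, and the insertion of the standard projection and approximation bounds — is routine.
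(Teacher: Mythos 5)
Your argument is correct and reaches the same reduction as the paper (triangle inequality, then a bound of $\|E'\|_{L^2(I;\cV')}$ with $E=U-\cI_q u$ in terms of $\|u-U\|_{L^2(I;\cU)}$ via the Galerkin identity), but the key technical device differs. The paper evaluates the Galerkin identity directly with the time-weighted Riesz representative $X=\sum_n k_n^{-1}(t-t_{n-1})\tilde E\chi_{I_n}$, where $\tilde E(t)\in\cV$ represents $E'(t)\in\cV'$; the factor $(t-t_{n-1})$ annihilates the jump and initial terms of $D_\cH(E,\cdot)$, and the weight is then removed using the equivalence of $\phi\mapsto\int_0^1|\phi|$ and $\phi\mapsto\int_0^1 t|\phi|$ on $\PP_{2q-4}$. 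You instead dualize $\|E'\|_{L^2(I;\cV')}$ against an arbitrary $\phi\in L^2(I;\cV)$ and build a discrete test function that vanishes at the left endpoints (killing the jumps exactly) while matching the moments of $\phi$ against $\PP_{q-2}(I_n;\cV)$ in the $\cH$-inner product, so that $D_\cH(E,X)=\int_I(E',\phi)_\cH$ because $E'|_{I_n}\in\PP_{q-2}(I_n;\cV)$. Your interpolant does exist, is unique and is stable with a constant depending only on $q$: expanding in $L^2(I_n)$-orthogonal Legendre polynomials, the moment conditions fix the coefficients $x_0,\dots,x_{q-2}\in\cV$ as scaled Bochner moments of $\phi$, the endpoint condition fixes $x_{q-1}$ (Legendre polynomials do not vanish at endpoints), and all ratios entering the stability constant are scale-invariant. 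In the end both proofs rest on a finite-dimensional, $k_n$-independent norm equivalence on $\PP_{q-1}$; yours packages it as stability of an interpolation operator, the paper's as a weighted-norm equivalence. Your handling of \eqref{CeaboundB} (projection bound for $u'-(\cI_q u)'$ in $\cV'$ plus Theorem~\ref{mainthm1} at smoothness index $m-1$) is exactly what the paper does. No gaps.
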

\begin{proof}
We will use $E = U - \cI_q u \in \PP^b(I;\cH)$. Using the Riesz representation theorem in $\cV$ (with $\|\cdot\|_\cV=\|\cdot\|_\cU$), we find $\tilde E \in  \oplus_{n=1}^N \PP_{q-2}(I_n; \cV) $ such that \[
\|E'(t)\|^2_{\cV'} = (E'(t),\tilde{E}(t))_\cH = \|\tilde E(t)\|^2_{\cU}\quad\text{for almost all } t\in I.
\]
We define $X=\sum_{n=1}^N k_n^{-1} (t-t_{n-1}) \tilde E(t) \chi_{I_n} \in \PP^b(I;\cV)$ and using Lemma \ref{LemmaProj} and the Galerkin orthogonality, cf. Lemma~\ref{LemmaDivp}, we get
\[ \begin{split}
D_\cH(E,X)  & = D_\cH(U - \cI_q u,X)=D_\cH(U - u,X) \\ & =K(U- u, X)-\int_I a(U-u,X)= \int_I a(u-U,X).
\end{split} \]
This yields 
\begin{align*}
D_\cH(E,X) 
& \leq \sum_{n=1}^N \int_{I_n} k_n^{-1} (t-t_{n-1}) \Gamma \|u-U\|_\cU \|\tilde E \|_{\cU}  
\\& \leq \frac{1}{2}\sum_{n=1}^N  \int_{I_n} k_n^{-1}(t-t_{n-1}) \left( \Gamma^2 \|u-U\|_\cU ^2+ \|E'\|_{\cV'}^2 \right).
\end{align*}
Noting that the left-hand side is \[
D_\cH(E,X) = \sum_{n=1}^N \int_{I_n} k_n^{-1}(t-t_{n-1}) (E',\tilde E)_\cH = 
\sum_{n=1}^N \int_{I_n} k_n^{-1}(t-t_{n-1}) \|E'\|_{\cV'}^2 ,
\]
we get 
\[
\sum_{n=1}^N \int_{I_n} k_n^{-1} (t-t_{n-1}) \|E'\|_{\cV'}^2 \leq \sum_{n=1}^N  \int_{I_n} k_n^{-1}(t-t_{n-1})  \Gamma^2 \|u-U\|_{\cU}^2 \leq \Gamma^2 \|U-u \|_{L^2(I;\cU)}^2.
\]
Using that the $L^1$-norm on $\PP_{2q-4}([0,1])$ is equivalent to the norm $\phi \mapsto \int_0^1 t|\phi(t)| dt$, we get 
 \[
\int_I \|E'\|_{\cV'}^2 = \sum_{n=1}^N \int_{I_n} \|E' \|^2_{\cV'} \leq \sum_{n=1}^N \int_{I_n} C k_n^{-1} (t-t_{n-1}) \|E'\|_{\cV'}^2 \leq C \Gamma^2 \|U-u \|_{L^2(I;\cU)}^2,
\]
for some constant $C>0$ which depends only on $q$. Using a triangle inequality completes the proof of \eqref{resA}. 
Results for the projection error $\|u'-(\cI_q u)'\|_{L^2(I;\cV')}$ are known in the literature  \cite[p.~214]{Thomee}. Using these results and Theorem \ref{thmdiscr1} we obtain the optimal discretization error bound \eqref{CeaboundB}.
\end{proof}

\subsection{Optimal superconvergence result}\label{SubSectionOptSuper}
As is known from the literature (see \cite[Theorem 12.3]{Thomee}), for the case with a \emph{homogeneous} constraint in \eqref{problem}  we get a superconvergence result provided the solution $u$ is sufficiently smooth.  The analysis in \cite[Theorem 12.3]{Thomee} is not directly applicable to the case with an inhomogeneous constraint. Note that in the latter case the projection operator $\cI_q$ is used in the discretization method, cf.~\eqref{Divp1}. In this subsection we derive an optimal superconvergence result for the discretization \eqref{Divp}-\eqref{Divp1}. For this we first introduce an abstract notion of regularity, which is very similar to  the one used in \cite{Thomee}. 
We define $\cV^0 := \overline{\cV}^{\cH}$ (with norm $\|\cdot\|_\cH$),  $\cV^1 := \cV$ (with norm $\|\cdot\|_\cU $), which induces the Gelfand triple $\cV \hookrightarrow \cV^0 \cong (\cV^0)' \hookrightarrow \cV'$. Furthermore  \[
\cV^2 := \{\,u\in \cV~|~Au \in (\cV^0)' \, \}.
\]
Based on  the Riesz representation theorem in $\cV^0$, for any $u\in \cU$ with $Au \in (\cV^0)'$ we define $\tilde L u\in \cV^0$ by $(\tilde Lu,v)_\cH= a(u,v)$ for all $v \in \cV$. Let us consider the restriction of $L:=\tilde L|_{\cV^2}$, i.e., $L:\,\cV^2 \to \cV^0$ with
\begin{equation}\label{propLt}
  (Lu,v)_\cH= (Au)(v)=a(u,v) \quad \text{for all}~~u \in \cV^2,\, v \in \cV.
\end{equation}

For $u \in \cV^2$ with $Lu=0$ it follows that $a(u,v)=0$ for all $v \in  \cV$, hence, $u=0$. Take $f \in \cV^0$. There exists a unique $u \in \cV$ such that $a(u,v)=(f,v)_\cH$ for all $v \in  \cV$. This implies $\|Au\|_{(\cV^0)'} = \|f\|_\cH$, hence $u \in \cV^2$. We conclude that $L:\,\cV^2 \to \cV^0$ is a bijection. Its inverse $L^{-1}:\, \cV^0 \to \cV^2 \subset \cV^0$ is symmetric, bounded and positive. The unique square root operator of $L^{-1}$ (that commutes with $L^{-1}$) is denoted by $L^{-\frac12}:\cV^0 \rightarrow \cV^0$. We define $L^{\frac{1}{2}} := L^{-\frac{1}{2}} L:\cV^2 \rightarrow \cV^0$, hence, $(L^{\frac{1}{2}}v, L^{\frac{1}{2}}v )_\cH=(Lv,v)_\cH= a(v,v)$ for all $v \in \cV^2$. The definition of $L^\frac12$ can be extended to $\cV$. First note that $A:\cV \rightarrow \cV'$ is a homeomorphism and $\cV^2$ is defined as the preimage of $(\cV^0)'$. Since $(\cV^0)'\subset \cV'$ is dense, it follows that $\cV^2 \subset \cV$ is dense. 
Using the ellipticity and continuity of $a(\cdot,\cdot)$ we see that 
\[
\gamma^{\frac12} \|v\|_{\cU} \leq  a(v,v)^{\frac{1}{2}} =\|L^{\frac{1}{2}}v\|_\cH \leq \Gamma^{\frac12} \|v\|_{\cU} \quad \text{for all}~~v \in \cV^2.
\]
Thus, using the density of $\cV^2$ in $\cV$, we conclude that $L^{\frac12}:\, \cV \to \cV^0$ is a homeomorphism. Since $L^{\frac{1}{2}}$ is symmetric (due to $L^{-\frac{1}{2}} L=LL^{-\frac{1}{2}}$), the dual norm equivalence reads  $\gamma^{\frac12} \|v\|_{\cH} \leq \|L^\frac12 v\|_{\cV'}\leq \Gamma^\frac12 \|v\|_{\cH}$ for all $v \in  \cV$. 

For $m \in \Bbb{N},~m> 2$, we define  
\[
\cV^m = \{\, u\in \cV^{2}~|~ L u \in \cV^{m-2}\, \}.
\]
Note that $L^\frac12 L  v=L L^\frac 12 $ for all $v \in \cV^3$ and $\cD(L^\frac{m}{2})=\cV^{m}$, $m \in \Bbb{N}$. As a measure for regularity we define
\[
|u|_m:=\sup_{v\in \cV^m} \frac{(u,L^{\frac{m}{2}} v)_\cH}{\|v\|_\cH}, \quad u \in \cH,\quad m \in\Bbb{N}
\]
(note that $|u|_m = \infty$ is allowed). 

Below we use the notation $L^\frac{m}{2} u := ({\rm id}_{L^2(I)} \otimes L^\frac{m}{2})u$, $\tilde L u= ({\rm id}_{L^2(I)} \otimes \tilde L)u$, for $m\in \mathbb{Z}$ and for functions $u$ for which the right hand-side expressions are well-defined.
\ \\
For the analysis we introduce a standard dual \emph{homogeneous} problem, i.e., source term $f=0$ and with a homogeneous constraint $Bu=0$;  due to duality $t$ is replaced by $T-t$ and we prescribe an initial value at $t=T$. This leads to the following problem formulation:
find $z \in W^1(\cV;\cV')$ with $z(T)=\phi \in \cV^0$ and
\begin{equation}
- z'(t) + A z(t) = 0 \quad \text{in}~~L^2(I;\cV').
\label{ivpTranspose}
\end{equation}
This problem is well posed.
In the analysis below we need  that the solution $z$ has regularity $z \in H^1(I;\cH)$. It is easy to see that $z$ has this regularity property if $\phi \in \cV$. Indeed, for $\phi \in \cV$ let $y\in {W^1(\cV;\cV')}$ be the solution of the parabolic problem \eqref{ivpTranspose} with $y(T)=L^{\frac{1}{2}} \phi \in \cV^0$, then $z= L^{-\frac{1}{2}} y$ solves \eqref{ivpTranspose} with $z(T)=\phi$ and $z'=L^{-\frac{1}{2}} y' \in L^2(I;\cH) $. Hence, for $\phi \in \cV$ we have $z \in H^1(I;\cH)$. 
This implies that \eqref{ivpTranspose} holds in $L^2(I,(\cV^0)')$. 

The corresponding discrete problem is as follows: determine $Z \in \PP^b(I;\cV)$ such that 
\begin{equation}\label{DivpTranspose}
K(X,Z) = -D^*_\cH(X,Z)  + \int_I a(Z, X)  = (\phi,X^N)_\cH \quad \text{for all}~~X\in \PP^b(I;\cV).
\end{equation}
This is the analogue of \eqref{Divp} for \eqref{ivpTranspose}.
This \emph{homogeneous} backward problem has been discussed in detail in \cite[p. 212-213]{Thomee}. If we replace $t$ by $T-t$, then we can apply the results from the previous subsection to $z$ and $Z$. In particular we can derive the following error estimate.
\begin{lemma} \label{lemmadualP}
Let $z$ and $Z$ be the solution of \eqref{ivpTranspose} and \eqref{DivpTranspose}, respectively and assume that $z\in H^1(I;\cH)$. The following error estimate holds for $1 \leq m \leq q$:
\begin{equation} \label{errordual}
 \|L^{-m+1+\frac12} (z-Z)\|_{L^2(I;\cH)}+ \|L^{-m + \frac12} (z'-Z')\|_{L^2(I;\cH)} \leq C k^{m-1} \|\phi\|_\cH
\end{equation}
for some $C>0$ which only depends on $q,\gamma$ and $\Gamma$.
\end{lemma}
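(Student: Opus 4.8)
The plan is to reduce the weighted estimate \eqref{errordual} to the error bounds of Subsection~\ref{sectglobal}, applied to the \emph{homogeneous} backward problem \eqref{ivpTranspose}--\eqref{DivpTranspose}. Replacing $t$ by $T-t$ turns \eqref{ivpTranspose} into a forward parabolic problem with $f=0$ and the homogeneous constraint $Bz=0$, and turns \eqref{DivpTranspose} into its DG discretization; since the constraint is homogeneous the discrete problem is posed directly on $\PP^b(I;\cV)$, so Theorem~\ref{thmdiscr1}, Theorem~\ref{mainthm1} and the estimates \eqref{resA}, \eqref{CeaboundB} all apply with $(\cU,\cH,\cU')$ replaced by $(\cV,\cV^0,\cV')$; the regularity $z\in H^1(I;\cH)$ assumed here (available since $\phi\in\cV$, as explained above) provides exactly the smoothness needed for this. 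In what follows I keep writing $z,Z$ for the time-reversed continuous and discrete solutions.

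The key structural fact is that $L^{-(m-1)}$ commutes with both the continuous and the discrete problem. Since $A$, and hence every $L^{-s}$ with $s\ge 0$, is independent of $t$, $\zeta:=L^{-(m-1)}z$ again solves \eqref{ivpTranspose}, now with data $L^{-(m-1)}\phi$. At the discrete level $L^{-(m-1)}$ is $\cH$-self-adjoint, commutes with $L^{1/2}$ and hence with $a(\cdot,\cdot)$, and maps $\PP^b(I;\cV)$ into itself; since $D_\cH(\cdot,\cdot)$ only involves $\cH$-inner products, this yields $K(X,L^{-(m-1)}Z)=K(L^{-(m-1)}X,Z)$ for all $X\in\PP^b(I;\cV)$, so $Z_\zeta:=L^{-(m-1)}Z$ is precisely the DG solution of \eqref{DivpTranspose} with data $L^{-(m-1)}\phi$. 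Using $L^{-m+1+\frac12}(z-Z)=L^{\frac12}(\zeta-Z_\zeta)$ and $L^{-m+\frac12}(z'-Z')=L^{-\frac12}(\zeta'-Z_\zeta')$ together with the norm equivalences $\gamma^{\frac12}\|v\|_\cV\le\|L^{\frac12}v\|_\cH\le\Gamma^{\frac12}\|v\|_\cV$ and $\gamma^{\frac12}\|v\|_\cH\le\|L^{\frac12}v\|_{\cV'}\le\Gamma^{\frac12}\|v\|_\cH$ established in Subsection~\ref{SubSectionOptSuper}, the two terms on the left of \eqref{errordual} are bounded by $C(\gamma,\Gamma)\big(\|\zeta-Z_\zeta\|_{L^2(I;\cV)}+\|\zeta'-Z_\zeta'\|_{L^2(I;\cV')}\big)$.

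It remains to estimate $\zeta-Z_\zeta$. From $z'=Az$ (which is \eqref{ivpTranspose}) and parabolic smoothing of $z$ away from the terminal time one obtains, a.e.\ in $I$, $\zeta^{(j)}=L^{-(m-1-j)}z$ for $0\le j\le m-1$ (in particular $\zeta^{(m-1)}=z$) and $\zeta^{(m)}=Az$; since $L^{-(m-1-j)}$ maps $\cV^0$ boundedly into $\cV$ for $j\le m-2$ it follows that $\zeta\in H^{m-1}(I;\cV)\cap H^{m}(I;\cV')$, with all derivative norms bounded by $\|\phi\|_\cH$ and the top-order ones equal to $\|z\|_{L^2(I_n;\cV)}$, respectively $\|z'\|_{L^2(I_n;\cV')}\le\Gamma\|z\|_{L^2(I_n;\cV)}$. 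Testing \eqref{ivpTranspose} with $z$ gives the elementary energy identity $\|z\|_{L^2(I;\cV)}^2\le\gamma^{-1}\int_I a(z,z)\le(2\gamma)^{-1}\|\phi\|_\cH^2$. Applying Theorem~\ref{mainthm1} to $(\zeta,Z_\zeta)$ with regularity index $m-1$, and \eqref{CeaboundB} with index $m$, and using $k_n\le k$, then gives $\|\zeta-Z_\zeta\|_{L^2(I;\cV)}+\|\zeta'-Z_\zeta'\|_{L^2(I;\cV')}\le Ck^{m-1}\|\phi\|_\cH$; for $m=1$ this is just the stability estimate, obtained by testing \eqref{ivpTranspose} with $z$, \eqref{DivpTranspose} with $Z$, and using \eqref{resA}. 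Combined with the previous paragraph this proves \eqref{errordual}.

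The main obstacle is exactly the low time regularity of $z$ near the terminal time (equivalently, of $\zeta$ near $t=0$): $\zeta^{(m)}$ belongs to $L^2(I;\cV')$ but not to $L^2(I;\cV)$, so Theorem~\ref{mainthm1} cannot be used with index $m$, and one must differentiate only $m-1$ times in the $\cV$-estimate. The cancellation $L^{-(m-1)}A^{m-1}z=z$ is what makes this work: it keeps the surviving norms of $\zeta$ at the level $\|z\|_{L^2(I;\cV)}\lesssim\|\phi\|_\cH$ controlled by the elementary energy identity, even though no bound on $z$ that is pointwise in $t$ is available near $t=T$. The remaining steps---verifying that $L^{-(m-1)}$ commutes with the discrete form $K$, tracking the operator domains, and applying the norm equivalences---are routine bookkeeping, essentially as in \cite[p.~212--213]{Thomee}.
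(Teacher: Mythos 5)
Your proof is correct and follows essentially the same route as the paper's: you shift by $\zeta=L^{1-m}z$, $Z_\zeta=L^{1-m}Z$ (the paper's $y,Y$), use the commutation of $L$ with the continuous and discrete solution operators, control the top-order norms via the energy identity $\|L^{1/2}z\|_{L^2(I;\cH)}^2\le\tfrac12\|\phi\|_\cH^2$ together with $\zeta^{(m-1)}=z$, and then invoke \eqref{CeaboundA} with index $m-1$ and \eqref{CeaboundB} with index $m$. The only differences are presentational (you make explicit the self-adjointness argument for why $L^{1-m}$ commutes with the discrete form $K$, which the paper asserts directly).
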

\begin{proof}
First consider the case $m=1$. 
 The homogeneous problem \eqref{ivpTranspose} can be formulated as $z'= Lz$ in $L^2(I;\cV^0)$. From $(z',z)_{L^2(I;\cH)}=(L^{\frac{1}{2}}z,L^{\frac{1}{2}} z)_{L^2(I;\cH)}$ and integration by parts, we get $\|L^\frac12 z\|_{L^2(I;\cH)}^2 \leq \frac{1}{2} \|\phi\|_{\cH}^2$. Similarly, taking $X=Z$ in \eqref{DivpTranspose} and applying Lemma \ref{antiSym} shows that 
\[
\frac{1}{2}\|Z^N\|^2_\cH + \|L^{\frac{1}{2}} Z \|_{L^2(I;\cH)}^2 \leq  -D^*_\cH(Z,Z)  + \int_I a(Z, Z) \leq \frac{1}{2} \|\phi\|_\cH^2+\frac{1}{2} \|Z^N\|_\cH^2.
\]
We conclude that 
\begin{equation}\label{contOfHomBoth}
 \|L^{\frac{1}{2}} z \|_{L^2(I;\cH)}^2 +\|L^{\frac{1}{2}} Z \|_{L^2(I;\cH)}^2 \leq \|\phi\|_{\cH}^2.
\end{equation}
From \eqref{resA} and the standard bound $\|z'-(\cI_q z)'\|_{L^2(I;\cV')}\leq \tilde c \|z'\|_{L^2(I;\cV')}$ (for some $\tilde c$ which depends only on $q$, cf. \cite[p.~214]{Thomee}), we know that   
\begin{align*}
\|L^{-\frac12} (z'-Z')\|_{L^2(I;\cH)}  & \leq  \gamma^{-\frac{1}{2}} \|z'-Z'\|_{L^2(I;\cV')} \\
  & \leq c \big( \|z'\|_{L^2(I;\cV')} + \|z\|_{L^2(I;\cU)} +\|Z\|_{L^2(I;\cU)} \big) \\
 &  \leq c \big(\|z'\|_{L^2(I;\cV')}+   \gamma^{-\frac{1}{2}}  \|L^{\frac{1}{2}} z\|_{L^2(I;\cH)}  + \gamma^{-\frac{1}{2}} \|L^{\frac{1}{2}} Z\|_{L^2(I;\cH)}),
\end{align*}
with a constant $c$ which only depends on $q,\gamma$ and $\Gamma$.
Noting that $\|z'\|_{L^2(I;\cV')} = \|Lz\|_{L^2(I;\cV')} \leq \Gamma^{\frac{1}{2}} \|L^{\frac{1}{2}}z\|_{L^2(I;\cH)}$ and using \eqref{contOfHomBoth}, completes the proof for $m=1$.
We now consider the case $m\geq 2$. Let $y=L^{1-m} z$, $Y = L^{1-m} Z$ and $\psi = L^{1-m} \phi$.
 Note that the solution operators  $\phi \to z$ and $\phi \to Z$ in \eqref{ivpTranspose} and \eqref{DivpTranspose} commute with $L$. Thus $y$ solves \eqref{ivpTranspose} with initial condition $y(T)=\psi$ and $Y$ solves the corresponding discrete problem 
 \begin{equation*}
K(X,Y) = -D^*_\cH(X,Y)  + \int_I a(Y, X) = (\psi,X^N)_\cH \quad \text{for all}~~X\in \PP^b(I;\cV).
\end{equation*}  
Note that   $y^{(m-1)}=L^{m-1} y =  z\in  W^1(\cV;\V')$. Using the estimates \eqref{CeaboundA}, \eqref{contOfHomBoth} and $y^{(m-1)} =  z$  we get
\[
  \|y-Y\|_{L^2(I;\cU)} \leq c k^{m-1} \|y^{(m-1)}\|_{L^2(I;\cU)}\leq   c k^{m-1} \gamma^{-\frac{1}{2}} \|L^{\frac{1}{2}} z\|_{L^2(I;\cH)} \leq c  \gamma^{-\frac{1}{2}} k^{m-1} \|\phi\|_\cH
\]
and
\[
 \|L^{-m+1 +\frac12}(z-Z)\|_{L^2(I;\cH)}= \|L^\frac12(y-Y)\|_{L^2(I;\cH)} \leq c k^{m-1} \Gamma^{\frac{1}{2}} \gamma^{-\frac{1}{2}} \| \phi\|_\cH,
\]
with a constant $c$ which only depends on $q,\gamma$ and $\Gamma$.
From this we obtain the estimate for the first term in \eqref{errordual}.
In order to bound the second term we use \eqref{CeaboundB}, \eqref{contOfHomBoth} and $y^{(m-1)} =  z$:
\begin{align*}
\|L^{-m+\frac12}(z'-Z')\|_{L^2(I;\cH)} &= \|L^{-\frac12} (y'-Y')\|_{L^2(I;\cH)} \leq \gamma^{-\frac{1}{2}} \|y'-Y'\|_{L^2(I;\cV')} \\
 & \leq  c k^{m-1}(\|y^{(m)}\|_{L^2(I;\cV')} + \|y^{(m-1)}\|_{L^2(I;\cU)}) \\
 & \leq c k^{m-1}(\Gamma^{\frac12} \|L^{-\frac12} z' \|_{L^2(I;\cH)} + \gamma^{-\frac12} \|L^{\frac{1}{2}} z \|_{L^2(I;\cH)})  \\
 & \leq   c k^{m-1} (\Gamma^{\frac12} + \gamma^{-\frac12} )  \|L^{\frac{1}{2}}z \|_{L^2(I;\cH)} \\&\leq c  k^{m-1} (\Gamma^{\frac12} + \gamma^{-\frac12} ) \| \phi\|_\cH,
\end{align*}
with a constant $c$ which only depends on $q,\gamma$ and $\Gamma$.
This yields the bound for the second term in \eqref{errordual}.
\end{proof}
\ \\[1ex]
Using this we can derive the following result,  which has a superconvergence result as an easy corollary. 
\begin{theorem} \label{thmSuper} 
Assume that the solution $u$ of \eqref{problem} has regularity $u \in L^2_c(I;\cU)\cap H^1(I;\cH)$  and $Au \in L^2_c(I;(\cV^0)')$. Let $U\in\PP^b(I;\cU)$ be the solution of \eqref{Divp}-\eqref{Divp1}. Let  $2 \leq  \ell \leq 2q-1$. 
The following holds:
 \[
\|U^N - u(T)\|_\cH \leq c k^{\frac{\ell-1}{2}} \left( \int_{I} |\tilde Lu-\cI_q \tilde Lu|_{\ell-2}^2  \right)^\frac12
\]
for some $c>0$ which only depends on $q$, $\gamma$ and $\Gamma$. 
\end{theorem}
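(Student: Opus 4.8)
The plan is to run an Aubin--Nitsche type duality argument against the discrete backward problem \eqref{DivpTranspose}, in the spirit of \cite[Theorem~12.3]{Thomee} but now with the projection $\cI_q$ entering through the discretized constraint. Set $E := U - \cI_q u$. As in the proof of Theorem~\ref{thmdiscr1} one has $E \in \PP^b(I;\cV)$, and since $(\cI_q u)^N = (\cI_q u)(T) = u(T)$ by Definition~\ref{defI}, we have $U^N - u(T) = E^N \in \cV$. Because $\cV$ is dense in $\cV^0$, it suffices to bound $(E^N,\phi)_\cH$ for an arbitrary $\phi \in \cV$ with $\|\phi\|_\cH \le 1$. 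For such $\phi$ let $z$ solve \eqref{ivpTranspose} (so $z \in H^1(I;\cH)$, as shown above) and let $Z \in \PP^b(I;\cV)$ solve \eqref{DivpTranspose}; testing \eqref{DivpTranspose} with $X = E$ gives $(E^N,\phi)_\cH = K(E,Z)$.

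Next I would isolate the projection residual. By the consistency/Galerkin orthogonality in Lemma~\ref{LemmaDivp} one has $K(U-u,Z)=0$ (note $Z \in \PP^b(I;\cV)$), and by Lemma~\ref{LemmaProj}, cf.~\eqref{crucialA}, $D_\cH(u - \cI_q u, Z)=0$. Hence $K(E,Z) = K(u - \cI_q u,Z) = \int_I a(u - \cI_q u, Z)$. Since $Au \in L^2_c(I;(\cV^0)')$ and the time projection $\cI_q$ commutes with the spatial operator $A$ (tensor-product structure of $\cI_q$), the operator $\tilde L$ applies to $u$ and to $\cI_q u$ with $\tilde L \cI_q u = \cI_q \tilde L u$, so that $a(u-\cI_q u, Z) = (\tilde L u - \cI_q \tilde L u, Z)_\cH$ a.e.\ in $t$ (here $Z(t) \in \cV$). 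With $w := \tilde L u - \cI_q \tilde L u \in L^2_c(I;\cV^0)$ we thus obtain
\[
 (E^N,\phi)_\cH = \int_I (w,Z)_\cH = \int_I (w, Z - z)_\cH + \int_I (w, z)_\cH .
\]

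For the first term I would use that $\ell \ge 2$, so for $g \in \cV^0$ the definition of the seminorm gives $(w,g)_\cH \le |w|_{\ell-2}\,\|L^{-(\ell-2)/2}g\|_\cH$; by Cauchy--Schwarz in time,
\[
 \Big|\int_I (w,Z-z)_\cH\Big| \le \Big(\int_I |w|_{\ell-2}^2\Big)^{1/2} \|L^{-(\ell-2)/2}(Z-z)\|_{L^2(I;\cH)} ,
\]
and the last factor is controlled by Lemma~\ref{lemmadualP} with $m = (\ell+1)/2$ (admissible since $2 \le \ell \le 2q-1$ gives $1 \le m \le q$), yielding $\|L^{-(\ell-2)/2}(Z-z)\|_{L^2(I;\cH)} \le c\,k^{(\ell-1)/2}\|\phi\|_\cH$. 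For the second term I would exploit the projection orthogonality of $\cI_q$: since $2 \le \ell \le 2q-1$ forces $q\ge 2$, one has $\int_{I_n}(w,\psi)_\cH = 0$ for all $\psi \in \PP_{q-2}(I_n;\cV^0)$. Let $\Pi_{q-2}$ be the piecewise $L^2(I_n)$-projection onto $\PP_{q-2}$, which commutes with $L^{-(\ell-2)/2}$, and put $\zeta := L^{-(\ell-2)/2}z$. Then
\[
 \int_I (w,z)_\cH = \int_I (w, z - \Pi_{q-2}z)_\cH = \int_I \big(w,\, L^{(\ell-2)/2}(\zeta - \Pi_{q-2}\zeta)\big)_\cH \le \Big(\int_I |w|_{\ell-2}^2\Big)^{1/2} \|\zeta - \Pi_{q-2}\zeta\|_{L^2(I;\cH)} .
\]
A standard (fractional) polynomial approximation estimate gives $\|\zeta - \Pi_{q-2}\zeta\|_{L^2(I;\cH)} \le c\,k^{(\ell-1)/2}\,|\zeta|_{H^{(\ell-1)/2}(I;\cH)}$, using $(\ell-1)/2 \le q-1$; and since $z' = Lz$, so that $\zeta^{(j)} = L^{j-(\ell-2)/2}z$, we get $|\zeta|_{H^{(\ell-1)/2}(I;\cH)} \le c\,\|L^{1/2}z\|_{L^2(I;\cH)} \le c\|\phi\|_\cH$ by \eqref{contOfHomBoth}. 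Adding the two bounds and taking the supremum over $\phi$ gives the claim with $w = \tilde L u - \cI_q \tilde L u$.

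The main obstacle is the bookkeeping in the last step when $(\ell-1)/2$ is not an integer (i.e.\ $\ell$ even): this forces a half-integer exponent $m$ in Lemma~\ref{lemmadualP} and a genuinely fractional-order approximation/interpolation estimate for $\zeta - \Pi_{q-2}\zeta$, together with the identification $|\zeta|_{H^{(\ell-1)/2}(I;\cH)} \lesssim \|L^{(\ell-1)/2}\zeta\|_{L^2(I;\cH)}$ (which in turn needs $\zeta$ to have enough temporal regularity, guaranteed here by $\|L^{1/2}z\|_{L^2(I;\cH)}<\infty$). One should either verify that the shift argument $y = L^{1-m}z$ in the proof of Lemma~\ref{lemmadualP} is insensitive to integrality and thus valid for real $m\in[1,q]$, or else treat the cases $\ell$ odd and $\ell$ even separately. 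A secondary point to check carefully is the commutation $\cI_q \tilde L = \tilde L \cI_q$ and that $\tilde L \cI_q u = \cI_q \tilde L u$ is well defined, which rests precisely on $Au \in L^2_c(I;(\cV^0)')$ and the tensor-product form of $\cI_q$.
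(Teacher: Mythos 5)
Your argument is correct in substance but takes a genuinely different, and in its setup cleaner, route than the paper. The paper works with $E=U-u$, observes that in general $E\notin L^2(I;\cV)$ (only $\cI_qB(U-u)=0$ holds), and therefore detours through a transformed dual problem with endpoint datum $L^{-1}\phi$ tested against $\tilde LE$; you avoid this entirely by testing the discrete dual problem \eqref{DivpTranspose} with $\hat E:=U-\cI_qu\in\PP^b(I;\cV)$, which is an admissible test function, and then using Galerkin orthogonality together with $D_\cH(u-\cI_qu,\cdot)=0$ to reach the identity $(E^N,\phi)_\cH=\int_I(w,Z)_\cH$ with $w=\tilde Lu-\cI_q\tilde Lu$. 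This is in fact the same identity the paper ends up with (its extra term $\int_I(\tilde L\theta,L^{-1}Z')_\cH$ vanishes by the orthogonality of $w$ to $\PP_{q-2}(I_n;\cH)$), so the two derivations agree. Where you genuinely diverge is in estimating the part not controlled by the dual discretization error: the paper uses that same orthogonality to rewrite $\int_I(w,z)_\cH$ as $\sum_n\int_{I_n}(w,L^{-1}(z-Z)')_\cH$, so that \emph{both} terms are handled by Lemma~\ref{lemmadualP}; you instead keep $\int_I(w,z)_\cH$, exploit $\int_{I_n}(w,\psi)_\cH=0$ for $\psi\in\PP_{q-2}(I_n;\cV^0)$, and bound it by a piecewise-polynomial approximation estimate for $L^{-(\ell-2)/2}z$ together with the smoothing bound \eqref{contOfHomBoth}. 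Both are valid; the paper's version buys economy (only Lemma~\ref{lemmadualP} is needed), while yours makes the role of the projection orthogonality of $\cI_q$ more transparent.

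The one point you leave open is the case of even $\ell$, which is half the admissible range $2\le\ell\le 2q-1$. The paper's fix is the elementary interpolation inequality $\|L^{1/2}x\|_\cH^2\le\|Lx\|_\cH\|x\|_\cH$, which reduces the half-integer power to the two neighbouring integer instances of Lemma~\ref{lemmadualP}; this transfers verbatim to your first term $\|L^{-(\ell-2)/2}(Z-z)\|_{L^2(I;\cH)}$, so no extension of the lemma to non-integer $m$ is needed. For your second term, however, even $\ell$ genuinely forces a fractional-order $H^{(\ell-1)/2}$ Bramble--Hilbert estimate and the identification $|L^{-(\ell-2)/2}z|_{H^{(\ell-1)/2}(I;\cH)}\lesssim\|L^{1/2}z\|_{L^2(I;\cH)}$ via interpolation of the scale $\cV^m$; this is doable but is more machinery than the paper invokes. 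The cheaper repair is to first rewrite $\int_I(w,z)_\cH$ as the paper does and then apply the same operator interpolation inequality.
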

\begin{proof}
We introduce $E:=U-u$, $\phi := E(T) \in \cV$, $\hat E = U- \cI_q u\in \PP^b(I;\cV)$. $\theta = \cI_q u - u = E-\hat E$.
We take $\phi$ in the homogeneous backward problem \eqref{ivpTranspose}. The corresponding solutions of \eqref{ivpTranspose} and \eqref{DivpTranspose} are denoted by $z$ and $Z$, respectively.
We define $\zeta = z-Z\in H^{1,b}(I;\cH)$. One  can not apply a straightforward duality argument for  the following reason. We (only) have $\cI_q B(U-u)=0$ and thus in general $E \notin L^2(I; \cV)$. Hence, we can \emph{not} use $E$ as a test function in \eqref{ivpTranspose}. To overcome this problem, below we consider a transformed dual problem with endpoint condition $L^{-1} \phi$ (instead of $\phi$) and use a test function $\tilde L E$. The precise arguments are as follows.  Due the regularity assumption $Au\in L^2_c(I;(\cV^0)')$ we have that $\tilde Lu \in L^2_c(I;\cV^0)$ is well-defined. Using the Galerkin property $K(U-u,\cdot)=0$ in $\PP^b(I,\cV')$ we have
 \[
AU = D_\cH(u-U,\cdot)+Au\quad\text{in }\PP^b(I,\cV').
\]
Due to the density of $\cV$ in $\cV^0$ it follows that $AU\in  \PP^b(I;(\cV^0)')$ and thus $\tilde LU\in \PP^b(I;\cV^0) \subset L^2_c(I;\cV^0)$ is well-defined. It follows that $\tilde LE \in L^2_c(I;\cV^0)$. Using this and that $\tilde L \theta \in L^2_c(I;\cV^0)$ is well defined, we find that $L\hat E=\tilde L \hat E = \tilde L \theta - \tilde L E$ is well defined. Therefore, we have $\hat E \in L^2_c(I; \cV^2)$. Note that due to $E(T) = \hat E(T) \in \cV^2$ we have $(\tilde L E)(T)=\tilde L E(T) = LE(T)= L \phi$.  The dual problem with endpoint condition $L^{-1} \phi$ has corresponding solutions of \eqref{ivpTranspose} and \eqref{DivpTranspose} given by $y:= L^{-1}z$ and $Y:=L^{-1}Z$. Hence,
\[
 (L^{-1} \phi,v)_\cH= K(v,y)= D_\cH(v,y)+ \int_I a(v,y) =  D_\cH(v,y)+ \int_I (v, Ly)_\cH ~ \forall~v \in L^2(I;\cV).
\]
Using the density of $\cV$ in $\cV^0$ we obtain
\[
 (L^{-1} \phi,v)_\cH=  D_\cH(v,L^{-1}z)+ \int_I (v, z)_\cH \quad \text{for all}~~v \in L^2(I;\cV^0).
\]
We use $v=\tilde L E$ as test function, which yields
\begin{equation} \label{k8}
\|\phi\|^2_\cH = (L^{-1}\phi,L\phi)_\cH = (L^{-1}\phi, \tilde L E(T))_\cH=  D_\cH(\tilde L E,L^{-1}z)+ \int_I (\tilde LE, z)_\cH.
\end{equation} 
%
Note that from Lemma \ref{LemmaProj}, the Galerkin  property $0=K(E,X)$ for all $X\in \PP^b(I;\cV)$ and $\hat E \in \PP^b(I;\cV^2)$  we get
\begin{eqnarray*}
0&=&K(E,Z)=\cD_\cH(E,Z)+\int_I a(E,Z) = \cD_\cH(\hat E,Z)+\int_I a(E,Z)
\\ &=&D_\cH(L \hat E ,L^{-1}Z)+\int_I (\tilde L E,Z)_\cH
 =D_\cH(\tilde L \hat E,L^{-1}Z)+\int_I (\tilde LE ,Z)_\cH  \\ &=& D_\cH(\tilde L E,L^{-1}Z)+\int_I (\tilde LE ,Z)_\cH.
\end{eqnarray*}
Combing  this with \eqref{k8} and using $K(\hat E,\zeta)=0$ and \eqref{partialDM}, we obtain:
\begin{eqnarray*}
\|\phi\|_\cH^2
&=& D_\cH(\tilde L E,L^{-1}\zeta)+\int_I (\tilde L E,\zeta)_\cH 
  \\ &=& D_\cH(\tilde L\theta,L^{-1}\zeta)+\int_I (\tilde L\theta,\zeta)_\cH 
       +D_\cH(\tilde L\hat E, L^{-1}\zeta)+\int_I (\tilde L\hat E,\zeta)_\cH 
\\&=& D_\cH(\tilde L\theta,L^{-1}\zeta)+\int_I (\tilde L\theta,\zeta)_\cH 
       +D_\cH( L\hat E, L^{-1}\zeta)+\int_I a( \hat E,\zeta) 
\\&=& D_\cH(\tilde L\theta,L^{-1}\zeta)+\int_I (\tilde L\theta,\zeta)_\cH+D_\cH(\hat E,\zeta)+\int_I a(\hat E,\zeta)
\\&=&D_\cH(\tilde L\theta,L^{-1}\zeta)+\int_I (\tilde L\theta,\zeta)_\cH +K(\hat E,\zeta)= -D_\cH^*(\tilde L\theta,L^{-1}\zeta)+\int_I (\tilde L\theta,\zeta)_\cH.
\end{eqnarray*}
Noting that $\theta^n = 0$ for all $n=1,\dots, N$, we get
\begin{equation}
\|\phi\|_\cH^2 = - \sum_{n=1}^N\int_{I_n} ( L^{-1}\zeta', \tilde L\theta)_\cH  +\int_I (\zeta,\tilde L\theta)_\cH .
\end{equation}
By the definition of $|\cdot|_{\ell-2}$, we have
\begin{eqnarray*}
\|\phi\|_\cH^2  
&\leq & \sum_{n=1}^N\int_{I_n} \|  L^{-\frac{\ell}{2}} \zeta' \|_{\cH} | \tilde L\theta |_{\ell-2}  +\int_I \|L^{-\frac{\ell}{2}+1}\zeta \|_{\cH} |\tilde L \theta |_{\ell-2} .
\end{eqnarray*}
Using the Cauchy-Schwarz inequality, we get 
\begin{equation}\label{BoundinBoth}
\|\phi\|_\cH^2 \leq \left(\int_I |\tilde L\theta|_{\ell-2}^2 \right)^{1/2}  \left( \| L^{-\frac{\ell}{2}} \zeta' \|_{L^2(I;\cH)} + \|L^{-\frac{\ell}{2}+1}\zeta \|_{L^2(I;\cH)} \right).
\end{equation}
We will estimate the second factor in \eqref{BoundinBoth} by using bounds from Lemma~\ref{lemmadualP}. 
First we consider the case that $\ell$ is odd. Then $m:=\frac{\ell}{2}+ \frac12$ is a natural number with $1\leq m \leq q$. We then obtain
\begin{equation} \label{tt}
\begin{split}
  & \|L^{-\frac{\ell}{2}+1}\zeta \|_{L^2(I;\cH)}+ \| L^{-\frac{\ell}{2}} \zeta' \|_{L^2(I;\cH)}\\  & =  \|L^{-(m-1)+\frac12}\zeta \|_{L^2(I;\cH)}+ \| L^{-m +\frac12} \zeta' \|_{L^2(I;\cH)} \\
 & \leq C k^{m-1} \|\phi\|_\cH = Ck^{\frac{\ell-1}{2}} \|\phi\|_\cH,
\end{split}
\end{equation}
and in combination with \eqref{BoundinBoth} this yields the desired result.
For $\ell$ even we use the following property \[
\forall x \in \cV^2: \|L^{\frac{1}{2}}x\|_\cH^2 = (Lx,x)_\cH \leq \|L x\|_\cH\|x\|_\cH.
\]
The term $ \| L^{-\frac{\ell}{2}} \zeta' \|_{L^2(I;\cH)}$ can be treated as follows, where we use  the bound derived in \eqref{tt}, Lemma \ref{lemmadualP},
\begin{align*}
  \| L^{-\frac{\ell}{2}} \zeta' \|_{L^2(I;\cH)} & \leq  \| L^{-\frac{\ell-1}{2}} \zeta' \|_{L^2(I;\cH)}^\frac12  \| L^{-\frac{\ell+1}{2}} \zeta' \|_{L^2(I;\cH)}^\frac12 \\
 & \leq C \big( k^{\frac{\ell-2}{2}}\big)^\frac12 \big( k^{\frac{\ell}{2}}\big)^\frac12 \|\phi\|_\cH = Ck^{\frac{\ell-1}{2}} \|\phi\|_\cH. 
\end{align*}
The term $\|L^{-\frac{\ell}{2}+1}\zeta \|_{L^2(I;\cH)}$ can be treated in the same way. Thus also for $\ell$ even we get the  bound as in \eqref{tt}.
\end{proof}
 \ \\

Results for the projection error $\int_I |\tilde L u-\cI_q \tilde L u|_{\ell-2}^2$ are known in the literature \cite[Theorem 3.10]{Schwab}. Using these results we obtain the following optimal discretization error bound.
\begin{theorem} \label{mainthm2}
Assume that the solution $u$ of \eqref{problem} has regularity $u \in L^2_c(I;\cU)\cap H^m(I;\cH)$ and  $Au \in H^m(I;(\cV^0)')$. Let $U\in\PP^b(I;\cU)$ be the solution of \eqref{Divp}-\eqref{Divp1}. For any $2 \leq \ell \leq 2q-1$, $1 \leq m \leq q$, we have
\begin{equation}
 \|u(T)-U(T)\|_\cH \leq c k^{\frac{\ell-1}{2}} \left(\sum_{n=1}^N k_n^{2m} \int_{I_n} |\tilde Lu^{(m)}|_{\ell-2}^2 \right)^{\frac{1}{2}}
\leq  c k^{\frac{\ell-1}{2}+m} \left(\int_I |\tilde Lu^{(m)}|_{\ell-2}^2\right)^{\frac{1}{2}} \label{CeaboundC}
\end{equation}
for some $c>0$ which depends only on $q$, $\gamma$ and $\Gamma$.
\end{theorem}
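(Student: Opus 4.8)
The plan is to combine the abstract superconvergence estimate of Theorem~\ref{thmSuper} with a standard approximation bound for the time projection $\cI_q$ applied to $\tilde Lu$, measured in the seminorm $|\cdot|_{\ell-2}$.

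First I would verify that the hypotheses of Theorem~\ref{thmSuper} are satisfied. Since $m\ge1$ we have $H^m(I;\cH)\subset H^1(I;\cH)$, so $u\in L^2_c(I;\cU)\cap H^1(I;\cH)$; and $H^m(I;(\cV^0)')\subset L^2(I;(\cV^0)')$, while the one-dimensional embedding $H^m(I)\hookrightarrow C(\overline I)$ for $m\ge1$ shows that every element of $H^m(I;(\cV^0)')$ is continuous on $\overline I$, hence lies in $L^2_c(I;(\cV^0)')$; thus $Au\in L^2_c(I;(\cV^0)')$ and $\tilde Lu$ is well defined. Theorem~\ref{thmSuper} then yields, for $2\le\ell\le 2q-1$,
\begin{equation*}
 \|u(T)-U(T)\|_\cH \le c\,k^{\frac{\ell-1}{2}}\Big(\int_I |\tilde Lu-\cI_q\tilde Lu|_{\ell-2}^2\Big)^{\frac12}.
\end{equation*}

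Next I would estimate the projection error $\int_I|\tilde Lu-\cI_q\tilde Lu|_{\ell-2}^2=\sum_{n=1}^N\int_{I_n}|(I-\cI_q)\tilde Lu|_{\ell-2}^2$. The relevant facts are that $\cI_q$ acts only in the time variable, so by the tensor-product structure (cf.\ Remark~\ref{remprojection}) it commutes with the time-independent operator $\tilde L$ and with time differentiation, giving $(\tilde Lu)^{(m)}=\tilde Lu^{(m)}$; and that $|\cdot|_{\ell-2}$ (for $\ell-2\ge0$) is a Hilbert-space norm, so the scalar approximation property behind Definition~\ref{defI} and Lemma~\ref{LemmaProj} lifts to the $|\cdot|_{\ell-2}$-valued setting. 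This is precisely the projection error bound of \cite[Theorem~3.10]{Schwab}: for $1\le m\le q$ and every $n$,
\begin{equation*}
 \int_{I_n}|(I-\cI_q)\tilde Lu|_{\ell-2}^2 \le c\,k_n^{2m}\int_{I_n}|\tilde Lu^{(m)}|_{\ell-2}^2 .
\end{equation*}
Summing over $n=1,\dots,N$ gives the first inequality in \eqref{CeaboundC}; replacing each $k_n$ by $k$ in the sum gives the second.

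The step I expect to require the most care is not deep but is the crux of the argument: confirming that the $\cI_q$-approximation estimate is available in exactly the form needed, namely for Hilbert-space-valued functions with the range norm $|\cdot|_{\ell-2}$ rather than $\|\cdot\|_\cH$, and that the commutations $\cI_q\tilde L=\tilde L\cI_q$ and $(\cI_qw)^{(m)}=\cI_q(w^{(m)})$ are valid on the (possibly incompatible-data) regularity class under consideration, so that the right-hand side of \eqref{CeaboundC} is indeed the quantity that controls the error. The remaining manipulations are routine.
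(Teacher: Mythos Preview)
Your proposal is correct and is exactly the argument the paper intends: apply Theorem~\ref{thmSuper} and then the projection error bound for $\cI_q$ from \cite[Theorem~3.10]{Schwab}, interval by interval, in the seminorm $|\cdot|_{\ell-2}$. One small correction to your closing remarks: the commutation $(\cI_q w)^{(m)}=\cI_q(w^{(m)})$ that you flag is neither true in general nor needed here---the approximation estimate for $\cI_q$ already places $w^{(m)}$ on the right-hand side, and the only identity actually used is $(\tilde L u)^{(m)}=\tilde L\,u^{(m)}$, which holds because $\tilde L$ is time-independent.
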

\ \\
Note that for the case of ``full regularity'', i.e, $\ell=2q-1$, $m=q$ we obtain the optimal superconvergence bound of order $k^{2q-1}$. For $\cU= \cV$ this corresponds to the result in \cite[Theorem 12.3]{Thomee}. Our estimate is more general because it states which convergence order one has, depending on the regularity of the solution, without assuming ``full regularity" $\ell=2q-1$. For the general case $\cV\neq \cU$, we have introduced the appropriate semi-norms $|\cdot|_\ell$ which express regularity. 

\section{Error analysis for time-discrete mixed formulation} \label{SectionErrorMixed}
In this section we derive discretization error bounds for the discrete  mixed  formulation \eqref{Divpmixed}-\eqref{Divp1mixed}. 
Let $(u,p)$ be the solution of \eqref{problemmixed} and $(U,P)$  the solution of \eqref{Divpmixed}-\eqref{Divp1mixed}, then $u$ and $U$ solve \eqref{problem} and \eqref{Divp}-\eqref{Divp1}, respectively, cf. Lemma~\ref{LemmaDivp}. Thus, \emph{the optimal error bounds derived in Section \ref{SectionErrorAnalysis} also hold for the solution $U$ of the discrete mixed  formulation}. It remains to bound the error for the Lagrange multiplier. We present two results. The first result gives a sub-optimal bound of order $\cO(k^{q-\frac{1}{2}})$, without making any further specific assumptions. In the second part we introduce a certain regularity assumption under which  an optimal error bound is derived.
\subsection{Sub-optimal result}\label{SubSectionBadLM}
In this subsection we derive a sub-optimal discretization error bound for the Lagrange multiplier. Below we use the notation introduced in Subsection~\ref{sectmixed}, in particular the bilinear form $b_1(\cdot,\cdot)$ and the corresponding operator $B_1: \cV_2 \to Q_1'$ with $\ker(B_1)=\cV_1$. To simplify the presentation we assume that we have a uniform step size $k_n=k$. We start with two lemmas that provide useful bounds for the error term  $\cI_q u-U$.

\begin{lemma} \label{lem89}
Let $k_n=k$ for all $n=1,\dots,N$. 
Assume that the solution $u$ of \eqref{problem} has regularity $u \in H^1(I;\cU)$. Let  $U\in\PP^b(I;\cU)$ be the solution of \eqref{Divp}-\eqref{Divp1}. The following holds:
 \[
\| (\cI_q u-U)' \|_{ L^2(I;\cH)}  \leq  c(k^\frac{1}{2} \|(u- \cI_q u)' \|_{L^2(I;\cU)} +  k^{-\frac{1}{2}}  \|\cI_q u- u \|_{L^2(I;\cU)})
\]
for some $c>0$ which depends only on $q,\gamma$ and $\Gamma$.
\end{lemma}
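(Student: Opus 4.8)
The plan is to test the discrete equation \eqref{Divp} with a function built from $E' := (\cI_q u - U)'$, exactly as in the proof of the bound \eqref{resA} for $\|u'-U'\|_{L^2(I;\cV')}$, but now exploiting the extra regularity $u \in H^1(I;\cU)$ to work directly in the $\cH$-norm rather than the $\cV'$-norm. Write $E = U - \cI_q u \in \PP^b(I;\cV)$ (note $BE = 0$ by \eqref{crucial}, so $E$ is an admissible test function). On each $I_n$, $E'|_{I_n} \in \PP_{q-2}(I_n;\cH)$; the natural candidate test function is $X = \sum_{n=1}^N k_n^{-1}(t - t_{n-1}) E'|_{I_n} \chi_{I_n} \in \PP^b(I;\cV)$, which lies in the kernel of $B$ because $E'$ does. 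Then I would compute $D_\cH(E, X)$ in two ways.

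First, using Lemma~\ref{LemmaProj} (the characterization $D_\cH(\cI_q u - u, \cdot) = 0$ on $\PP^b(I;\cV)$) together with the Galerkin orthogonality from Lemma~\ref{LemmaDivp}, I get
\[
D_\cH(E,X) = D_\cH(U - u, X) = K(U-u,X) - \int_I a(U-u,X) = \int_I a(u-U,X),
\]
so the right-hand side is controlled by $\Gamma \sum_n \int_{I_n} k_n^{-1}(t-t_{n-1}) \|u-U\|_\cU \|E'\|_\cH$. Second, since the jump terms of $E$ do not enter when paired against a function vanishing at each $t_{n-1}$, the left-hand side equals $\sum_n \int_{I_n} k_n^{-1}(t-t_{n-1}) \|E'\|_\cH^2$. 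Applying Young's inequality and then the norm equivalence on $\PP_{2q-4}([0,1])$ between the $L^1$-norm and $\phi \mapsto \int_0^1 t|\phi(t)|\,dt$ (as in the proof of \eqref{resA}) upgrades the weighted bound to $\int_I \|E'\|_\cH^2 \le c\,\Gamma^2 \|u-U\|_{L^2(I;\cU)}^2$. This gives $\|E'\|_{L^2(I;\cH)} \le c\,\Gamma\,\|u-U\|_{L^2(I;\cU)}$.

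To finish I would split $\|u - U\|_{L^2(I;\cU)} \le \|u - \cI_q u\|_{L^2(I;\cU)} + \|E\|_{L^2(I;\cU)}$, bound the first term directly, and bound $\|E\|_{L^2(I;\cU)}$ by an inverse-type estimate in time. Here is where the factors $k^{1/2}$ and $k^{-1/2}$ enter: using $E \in \PP^b(I;\cV)$ with $E^n = 0$ (since $(\cI_q u)^n = u^n = U^n$, the latter because $BU = \cI_q g$ forces $U - \cI_q u \in \PP^b(I;\cV)$ and one checks endpoint values — actually simpler, $\|E\|_{L^2(I_n;\cU)} \le c\,k_n \|E'\|_{L^2(I_n;\cU)}$ fails in general, so instead I would bound $\|E\|_{L^2(I;\cU)}$ via Theorem~\ref{thmdiscr1}: $\|u - U\|_{L^2(I;\cU)} \le (1 + \sqrt2 c_\gamma \Gamma)\|u - \cI_q u\|_{L^2(I;\cU)}$, so $\|u-U\|_{L^2(I;\cU)} \le c\|u - \cI_q u\|_{L^2(I;\cU)}$). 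Then $\|E'\|_{L^2(I;\cH)} \le c\,k^{-1/2}\|u - \cI_q u\|_{L^2(I;\cU)}$, which is already the second term of the claimed bound. To get the asserted form with both terms one instead keeps one copy of $\|u-U\|_{L^2(I;\cU)}$ expressed via the triangle inequality as above and estimates $\|E\|_{L^2(I;\cU)}$ by combining $\|E\|_{L^2(I_n;\cH)} \le c\,k_n^{1/2}\,\|E^{n-1}_+\|_\cH$-type arguments with the already-derived $\|E'\|_{L^2(I;\cH)}$ bound; I expect the main obstacle to be precisely this bookkeeping — i.e.\ tracking which $\|u-U\|$ gets replaced by $\|u - \cI_q u\|$ versus kept, so that the powers of $k$ come out as $k^{1/2}$ on the derivative projection error and $k^{-1/2}$ on the $\cU$-projection error, rather than collapsing everything to one term. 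The weighted polynomial inequality and the fact that $E$ has vanishing nodal values (hence the jump terms in $D_\cH$ drop against $X$) are the two technical points that make the argument go through.
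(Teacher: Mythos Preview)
Your approach has a genuine gap at the step where you bound $\int_I a(u-U,X)$. With $X|_{I_n}=k_n^{-1}(t-t_{n-1})E'$, continuity of $a(\cdot,\cdot)$ on $\cU\times\cU$ gives
\[
|a(u-U,X)| \le \Gamma\,k_n^{-1}(t-t_{n-1})\,\|u-U\|_\cU\,\|E'\|_{\cU},
\]
\emph{not} $\|E'\|_\cH$ on the right. Since the left-hand side of your identity is $\sum_n\int_{I_n}k_n^{-1}(t-t_{n-1})\|E'\|_\cH^2$, you cannot absorb $\|E'\|_\cU$ into it, and the argument stalls. (Had your bound been valid, you would have obtained the strictly stronger estimate $\|E'\|_{L^2(I;\cH)}\le c\,\|u-\cI_q u\|_{L^2(I;\cU)}$, with no $k^{-1/2}$---this is exactly why you found yourself puzzled at the end about where the powers of $k$ should come from.)

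The paper's proof uses the same test function (unnormalized, $X|_{I_n}=(t-t_{n-1})E'$) but treats the $a$-term differently: it splits $a(u-U,X)=a(E,X)+a(u-\cI_q u,X)$ and \emph{integrates each by parts in time}. For the first piece,
\[
-\int_{I_n}a\big(E,(t-t_{n-1})E'\big)=-\tfrac{k}{2}\,a(E^n,E^n)+\tfrac12\int_{I_n}a(E,E)\le \tfrac{\Gamma}{2}\|E\|_{L^2(I_n;\cU)}^2,
\]
and for the second (using $u\in H^1(I;\cU)$),
\[
\Big|\int_{I_n}a\big(u-\cI_q u,(t-t_{n-1})E'\big)\Big|\le k\Gamma\|(u-\cI_q u)'\|_{L^2(I_n;\cU)}\|E\|_{L^2(I_n;\cU)}+\Gamma\|u-\cI_q u\|_{L^2(I_n;\cU)}\|E\|_{L^2(I_n;\cU)}.
\]
In both cases the factor $(t-t_{n-1})E'$ is converted to $E$, so $\|E'\|_\cU$ never appears. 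Combined with the weighted inverse inequality this yields $k\|E'\|_{L^2(I;\cH)}^2\le \tilde C\big(\|E\|_{L^2(I;\cU)}^2+\|u-\cI_q u\|_{L^2(I;\cU)}^2+k^2\|(u-\cI_q u)'\|_{L^2(I;\cU)}^2\big)$; dividing by $k$ and invoking Theorem~\ref{thmdiscr1} to bound $\|E\|_{L^2(I;\cU)}$ gives exactly the asserted $k^{\pm 1/2}$ structure.

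Two minor points: the jump terms in $D_\cH(E,X)$ vanish because $X_+^{n-1}=0$, not because $E$ has vanishing nodal values (it does not: $U^n\ne u(t_n)$ in general). And your proof of \eqref{resA} analogy breaks precisely because there the Riesz representer $\tilde E$ satisfied $\|\tilde E\|_\cU=\|E'\|_{\cV'}$; here, testing with $E'$ itself gives $\|E'\|_\cU$ in the $a$-bound, which is not $\|E'\|_\cH$.
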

\begin{proof}
Let $E=\cI_q u - U\in \PP^b(I;\cV)$.
From the consistency property \eqref{consis1} and Lemma \ref{LemmaProj}  we obtain the Galerkin relation 
\[
D_\cH(E, X ) + \int_I a(u-U, X )  = 0 \quad \text{for all}~~X \in \PP^b(I;\cV).
\]
For $X:=\sum_{n=1}^N (t-t_{n-1}) \chi_{I_n} E' \in \PP^b(I;\cV)$  we get 
\begin{equation}
\int_{I} (E', X )_\cH + \int_{I} a(E, X ) + \int_{I} a(u-\cI_q u, X ) = 0 . \label{SobolevInterpolTest}
\end{equation}
The $L^1$-norm on $P^{2q-4}([0,1])$ is equivalent to
 \[
\phi \mapsto \int_0^1t|\phi(t)|\,dt.
\]
Thus, by scaling, we get the following inverse inequality \[
k \int_{I_n} \|E'\|_{\cH}^2 \leq C \int_{I_n} (t-t_{n-1}) \|E'\|_{\cH}^2
\]
for each interval $I_n$, for some $C$ which depends only on $q$. Using this and \eqref{SobolevInterpolTest} we get
\begin{equation} \label{t0}
 k \| E' \|_{ L^2(I;\cH)}^2  \leq C \int_I (E',X)_\cH = C\left(-\int_I a(E,X) -\int_I a(u-\cI_q u, X ) \right).
\end{equation}
Using integration by parts on each interval $I_n$ we obtain
\begin{equation} \label{t1}
 \begin{split}
-\int_I a(E,X)  &= -\sum_{n=1}^N \int_{I_n} a(E,(t-t_{n-1})E')  \\ & = -\sum_{n=1}^N \frac{k}{2} a(E^n,E^n) + \frac{1}{2} \int_{I} a(E,E) \leq \frac12 \Gamma \|E\|_{L^2(I;\cU)}^2.
\end{split}
\end{equation}
Integration by parts on each interval $I_n$ yields 
\begin{equation} \label{t2}
\begin{split}
& -  \int_{I} a(u-\cI_q u, X ) \leq   \left| \sum_{n=1}^N \int_{I_n} a(u-\cI_q u,(t-t_{n-1})E') \right|
\\ &= \left| \sum_{n=1}^N \int_{I_n} a((u-\cI_q u)',(t-t_{n-1})E) - \int_{I} a(u-\cI_q u,E) \right|
\\ &\leq  \sum_{n=1}^N \int_{I_n} |t-t_{n-1}| | a((u-\cI_q u)',E)| + \int_{I} |a(u-\cI_q u,E) |
\\ &\leq  k\Gamma  \|(u-\cI_q u)'\|_{L^2(I;\cU)}  \|E\|_{L^2(I;\cU)} 
+ \Gamma \|u-\cI_q u \|_{L^2(I;\cU)} \|E \|_{L^2(I;\cU)} .
\end{split}
\end{equation}
Using \eqref{t1} and \eqref{t2} in \eqref{t0} we obtain 
\[
 k \| E' \|_{ L^2(I;\cH)}^2 \leq \tilde C\big( \|E \|_{L^2(I;\cU)}^2
+ \|u-\cI_q u \|_{L^2(I;\cU)}^2 +  k^2\| (u-\cI_q u)'\|_{L^2(I,\cU)}^2\big), 
\]
with  a constant $\tilde C$ which depends only on $q$ and $\Gamma$. 
Using the triangle inequality and Theorem \ref{thmdiscr1} completes the proof.
\end{proof}

\ \\
\begin{lemma}
Let $k_n=k$ for all $n=1,\dots,N$. 
Assume that the solution $u$ of \eqref{problem} has regularity $u \in H^1(I;\cU)$. Let  $U\in\PP^b(I;\cU)$ be the solution of \eqref{Divp}-\eqref{Divp1}. The following holds: \begin{align*}
  & k^{-\frac12}\Big(\| (\cI_q u - U)^0_+ \|_{ \cH}^2 +\sum_{n=1}^{N-1} \| [\cI_q u-U]^{n} \|_{ \cH}^2 \Big)^\frac12   \\ & \leq    c( \|(u- \cI_q u)' \|_{L^2(I;\cU)} +  k^{-\frac{1}{2}}  \|\cI_q u- u \|_{L^2(I;\cU)})
\end{align*}
for some $c>0$ which depends only on $q,\gamma$ and $\Gamma$.
\end{lemma}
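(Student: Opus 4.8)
The plan is to bound the jump/initial-value terms of $E := \cI_q u - U \in \PP^b(I;\cV)$ using the same Galerkin relation and the same test-function trick as in the previous lemma, but now with a test function that \emph{isolates the jumps} rather than the interior derivative. First I would recall the Galerkin identity $D_\cH(E,X) + \int_I a(u-U,X) = 0$ for all $X \in \PP^b(I;\cV)$, and recall from Lemma~\ref{antiSym} (applied on each subinterval, or directly via the definition of $D_\cH$) that
\[
D_\cH(E,E) \geq \tfrac12\big(\|E^N\|_\cH^2 + \|E^0_+\|_\cH^2 + \sum_{n=1}^{N-1}\|[E]^n\|_\cH^2\big) - (\text{interior term}),
\]
more precisely that $2D_\cH(E,E) = \|E^N\|_\cH^2 + \|E^0_+\|_\cH^2 + \sum_{n=0}^{N-1}\|E^n - E^n_+\|_\cH^2 = \|E^N\|_\cH^2 + \|E^0_+\|_\cH^2 + \sum_{n=0}^{N-1}\|[E]^n\|_\cH^2$ with the convention $E^0 := 0$ (so that $[E]^0 = E^0_+$). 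Hence choosing $X = E$ gives the jump terms on the left as soon as we control $D_\cH(E,E)$.

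Then I would write $D_\cH(E,E) = -\int_I a(u-U,E) = -\int_I a(u - \cI_q u, E) - \int_I a(E,E)$, so that
\[
\tfrac12\Big(\|E^0_+\|_\cH^2 + \sum_{n=1}^{N-1}\|[E]^n\|_\cH^2\Big) \le D_\cH(E,E) \le \Gamma\|u-\cI_q u\|_{L^2(I;\cU)}\|E\|_{L^2(I;\cU)} + \Gamma\|E\|_{L^2(I;\cU)}^2 ,
\]
using continuity of $a(\cdot,\cdot)$ and dropping the nonnegative $\|E^N\|_\cH^2$ term. Multiplying by $k^{-1}$ and invoking Theorem~\ref{thmdiscr1} (which bounds $\|E\|_{L^2(I;\cU)} \le \|u-U\|_{L^2(I;\cU)} + \|u-\cI_q u\|_{L^2(I;\cU)} \le c\|u-\cI_q u\|_{L^2(I;\cU)}$) immediately yields
\[
k^{-\frac12}\Big(\|E^0_+\|_\cH^2 + \sum_{n=1}^{N-1}\|[E]^n\|_\cH^2\Big)^{\frac12} \le c\, k^{-\frac12}\|u-\cI_q u\|_{L^2(I;\cU)}.
\]
This is already stronger than the claimed bound (the term $\|(u-\cI_q u)'\|_{L^2(I;\cU)}$ is not even needed), so a triangle inequality finishes the argument; alternatively, if one wants to follow the proof template of Lemma~\ref{lem89} more literally, one can instead use $X = \sum_n (t - t_{n-1})\chi_{I_n}E'$ together with the identity $\int_{I_n}(t-t_{n-1})(E',E')_\cH$ dominating a jump contribution after integration by parts, reproducing the estimates \eqref{t1}--\eqref{t2} verbatim and then absorbing.

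The only mild obstacle is the bookkeeping of the boundary/jump terms from integration by parts of $D_\cH$ and making sure the sign works out so that the jump terms appear with the right sign on the coercive side — this is exactly the computation already carried out in the proof of Lemma~\ref{antiSym}, so there is nothing genuinely new. A secondary point is that one must take the test function in $\PP^b(I;\cV)$, which is legitimate since $E \in \PP^b(I;\cV)$ (because $BE = \cI_q(Bu) - BU = \cI_q g - \cI_q g = 0$, cf.~\eqref{crucial}), and $E'$ restricted to each $I_n$ lies in $\PP_{q-2}(I_n;\cV) \subset \PP^b(I;\cV)$ as well. Given these, the proof is a two-line consequence of Lemma~\ref{antiSym}, the Galerkin orthogonality, continuity of $a$, and Theorem~\ref{thmdiscr1}.
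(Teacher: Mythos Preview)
Your proof is correct and, in fact, substantially simpler than the paper's. Taking $X=E$ in the Galerkin relation and reading off the exact identity
\[
2D_\cH(E,E)=\|E^N\|_\cH^2+\|E^0_+\|_\cH^2+\sum_{n=1}^{N-1}\|[E]^n\|_\cH^2
\]
from the computation in Lemma~\ref{antiSym} immediately isolates the jump terms; combining with $D_\cH(E,E)=-\int_I a(u-\cI_q u,E)-\int_I a(E,E)\le \Gamma\|u-\cI_q u\|_{L^2(I;\cU)}\|E\|_{L^2(I;\cU)}$ (note that $-\int_I a(E,E)\le 0$ by coercivity on $\cV$, so you can even drop the $\Gamma\|E\|^2$ term) and Theorem~\ref{thmdiscr1} gives the bound with only the $k^{-1/2}\|u-\cI_q u\|_{L^2(I;\cU)}$ term on the right, which is strictly stronger than what is stated. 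One small bookkeeping slip: in your ``more precisely'' display the sum should run from $n=1$, not $n=0$; with your convention $E^0:=0$ you would otherwise double-count $\|E^0_+\|_\cH^2$. This does not affect the subsequent inequality you actually use.

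The paper instead constructs a reflected test function $V_n(t)=E_n(t)-E_{n-1}(2t_{n-1}-t)$ so that $(V_n)_+^{n-1}=[E]^{n-1}$, tests with $X=\sum_n V_n$, and then controls the resulting interior derivative contribution via Lemma~\ref{lem89}; this is why the $\|(u-\cI_q u)'\|_{L^2(I;\cU)}$ term appears in their statement. Your route avoids both the reflection trick and the dependence on Lemma~\ref{lem89} entirely, at no cost: the coercivity identity for $D_\cH(E,E)$ already contains the jumps with the right sign. The paper's approach is more robust if one later needs jump bounds in situations where $\int_I a(E,E)$ cannot be signed (e.g.\ non-symmetric or time-dependent $a$ lacking coercivity), but for the present lemma your argument is the more economical one.
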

\begin{proof}
Let $E:=\cI_q u - U \in \PP^b(I; \cV)$. Let $E_n = \chi_{I_n }E$ for all $n=1,\dots,N$. Take for $2\leq n\leq N$, $\tilde E_{n} \in \PP_{q-1}(I_n; \cU)$ such that \[
\tilde E_{n}(t_{n-1} +t ) = E_{n-1}(t_{n-1} - t )\quad\text{for }t\in (0,k).
\]
Take $\tilde E_1=0$
and define $V_n := E_n-\tilde E_{n}$ for all $n$. 
 Note that $(V_1)_+^{0}=E^{0}_+$ and $(V_n)_+^{n-1}=[E]^{n-1}$ for $n=2,\dots,N$. 
For all $n=1,\dots,N$ and for $t\in I_n$ we have 
\begin{equation} \label{hh1}
\begin{split}
\| V_n(t) \|_{ \cH}  & \leq \| (V_n)_+^{n-1}\|_{ \cH} + \int_{t_{n-1}}^t \|V_n' \|_\cH \leq \| (V_n)_+^{n-1} \|_{ \cH} + \int_{I_n} \|V_n' \|_\cH\\
& \leq \|(V_n)_+^{n-1} \|_{ \cH} + \int_{I_n} \|E_n' \|_\cH + \int_{I_{n}} \|\tilde E_{n}' \|_\cH
\\& \leq \| (V_n)_+^{n-1} \|_{ \cH} + k^{\frac{1}{2}}\big( \|E' \|_{L^2(I_n;\cH)} +  \| E' \|_{L^2(I_{n-1};\cH)}\big),
\end{split}
\end{equation}
 with $I_0:=\emptyset$.
From the consistency property \eqref{consis1} and Lemma \ref{LemmaProj} we obtain the Galerkin relation 
\[
D_\cH(E, X ) + \int_I a(u-U, X )  = 0 \quad \text{for all}~~X \in \PP^b(I;\cV).
\]
If we take $X=\sum_{n=1}^N V_n$, we obtain 
\begin{equation} \label{hh2}
\| E^{0}_+ \|_{ \cH}^2 + \sum_{n=1}^{N-1} \|[E]^n\|^2_\cH =- \int_{I} (E', X )_\cH - \int_{I} a(u-U, X ).
\end{equation}
Using \eqref{hh1} we obtain for the first term on the right-hand side:
\begin{align*}
&\left|\int_{I} (E', X )_\cH\right| \leq \sum_{n=1}^N \int_{I_n} |(E', V_n )_\cH |  \leq  \sum_{n=1}^N \|E'\|_{L^1(
I_n;\cH)}\|V_n\|_{L^\infty(I_n;\cH)} 
\\ & \leq k^{\frac{1}{2}} \sum_{n=1}^N  \|E'\|_{L^2 (
I_n;\cH)} \big(\|(V_n)_+^{n-1}\|_{ \cH} + k^{\frac{1}{2}} (\|E' \|_{L^2(I_n;\cH)} + \| E' \|_{L^2(I_{n-1};\cH)}) \big)
\\ & \leq  \frac{k}{2} \|E'\|_{L^2(I;\cH)}^2 
+\frac{1}{2}\| E^0_+ \|_{ \cH}^2 + \frac{1}{2}\sum_{n=1}^{N-1} \|[E]^n\|^2_\cH
+ k \|E' \|_{L^2(I;\cH)}^2.
\end{align*}
The second term can be bounded as follows:
\begin{align*}
\left|\int_{I} a(u-U, X ) \right| & \leq \sum_{n=1}^N \Gamma \|u-U\|_{L^2(
I_n;\cU)}\|V_n\|_{L^2(I_n;\cU)} \\ & \leq \sum_{n=1}^N
\Gamma \|u-U\|_{L^2(
I_n;\cU)}(\|E_n \|_{L^2(I_n;\cU)} + \|\tilde E_n \|_{L^2(I_{n};\cU)})
\\ & \leq 
\frac \Gamma 2 \|u-U\|_{L^2(
I;\cU)}^2 + \Gamma \|E \|_{L^2(I;\cU)}^2.
\end{align*}
Using these bounds in \eqref{hh2} we get
\begin{align*}
  &  k^{-1} \big( \| (\cI_q u)^0_+ - U^0_+ \|_{ \cH}^2 + \sum_{n=1}^{N-1} \|[E]^n\|^2_\cH)  \\ & \leq C\big( \|E' \|_{L^2(I;\cH)}^2  + k^{-1}(\|E \|_{L^2(I;\cU)}^2+  \|u-U\|_{L^2(
I;\cU)}^2) \big)
\end{align*}
 with a constant $C$ which depends only on $\Gamma$.
Using a triangle inequality and the results in Lemma~\ref{lem89} and Theorem \ref{thmdiscr1}, we obtain the desired result.
\end{proof}

\begin{theorem}
Let $k_n=k$ for all $n=1,\dots,N$. 
Assume that the solution $(u,p)$ of \eqref{problemmixed} has regularity $u \in H^1(I;\cU)$. Let  $(U,P)\in\PP^b(I;\cU)$ be the solution of \eqref{Divpmixed}-\eqref{Divp1mixed}.
Let $\cI_q^{L^2}$ be the $L^2$-orthogonal projection from $L^2(I)$ to $\PP^b(I)$. 
The following holds: 
\begin{align*}
 \| p-P\|_{ L^2(I;\cQ_1)}  \leq &
 c \big(k^{\frac{1}{2}}\|(\cI_q u- u)' \|_{L^2(I;\cU)}+ k^{-\frac{1}{2}}\|\cI_q u- u \|_{L^2(I;\cU)} \big) 
\\ & +  \|p-\cI^{L^2}_q p \|_{L^2(I;\cQ_1)}
\end{align*}
with a constant $c$ which depends only on $q, \gamma, \Gamma$, $\beta$ from \eqref{assumpB1AA} and $c_\cH := \sup_{x\in \cU}\frac{\|x\|_\cH}{\|x\|_\cU}$. 

\end{theorem}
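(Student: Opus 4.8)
The plan is to bound $\|p - P\|_{L^2(I;\cQ_1)}$ by the standard Babuška--Brezzi route: exploit the discrete inf-sup stability of $b_1(\cdot,\cdot)$ to convert an estimate on $P - \cI^{L^2}_q p$ into an estimate on a suitable residual, and then bound that residual using the error equation together with the two preceding lemmas. First I would set $E := \cI_q u - U \in \PP^b(I;\cV)$ and $\rho := \cI^{L^2}_q p - P \in \PP^b(I;\cQ_1)$, and use a triangle inequality $\|p-P\| \le \|p - \cI^{L^2}_q p\| + \|\rho\|$, so that it remains to bound $\|\rho\|_{L^2(I;\cQ_1)}$. Because $B_1 : \cV_2 \to \cQ_1'$ is surjective, the inf-sup constant $\beta$ from \eqref{assumpB1AA} gives, for each fixed time (or after testing with time-dependent functions and using the tensor structure of $\PP^b(I;\cV_2)$), the bound
\[
\beta \|\rho\|_{L^2(I;\cQ_1)} \le \sup_{X \in \PP^b(I;\cV_2)} \frac{\int_I b_1(X,\rho)}{\|X\|_{L^2(I;\cU)}}.
\]

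Next I would compute $\int_I b_1(X,\rho)$ using the two discrete momentum equations. Subtracting the consistency relation \eqref{consis1mixed} for $(u,p)$ from the discrete equation \eqref{Divpmixed} for $(U,P)$ gives, for all $X \in \PP^b(I;\cV_2)$,
\[
K(U-u,X) + \int_I b_1(X, P - p) = 0,
\]
hence $\int_I b_1(X,\rho) = \int_I b_1(X,\cI^{L^2}_q p - p) + K(U-u,X)$. The first term vanishes for $X$ taking values in $\cV_2$ only if $b_1(X,\cdot)$ pairs trivially with $p - \cI^{L^2}_q p$; in general one keeps it and bounds it by $\Gamma_{b_1}\|X\|\,\|p - \cI^{L^2}_q p\|$ — but note that since $\cI^{L^2}_q$ is the $L^2$-projection in time and $b_1$ is time-independent, $\int_I b_1(X, p - \cI^{L^2}_q p) = 0$ for $X \in \PP^b(I;\cV_2)$ by the defining orthogonality of $\cI^{L^2}_q$, so this term actually drops out. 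That leaves $\int_I b_1(X,\rho) = K(U-u,X) = K(-E,X) + K(\cI_q u - u, X)$. Splitting $K = D_\cH + \int_I a(\cdot,\cdot)$ and using Lemma~\ref{LemmaProj} (which gives $D_\cH(\cI_q u - u, X) = 0$), one is left with
\[
\int_I b_1(X,\rho) = -D_\cH(E,X) - \int_I a(E,X) + \int_I a(\cI_q u - u, X).
\]

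The main obstacle is controlling the term $D_\cH(E,X)$, since this involves $E'$ on each interval as well as the jump terms $[E]^n$ and $E^0_+$, and $X$ ranges over all of $\PP^b(I;\cV_2)$ with only an $L^2(I;\cU)$ bound. Writing out $D_\cH(E,X) = \sum_n \int_{I_n}(E',X)_\cH + \sum_{n=1}^{N-1}([E]^n, X_+^n)_\cH + (E^0_+,X_+^0)_\cH$ and applying Cauchy--Schwarz in $\cH$ together with $\|\cdot\|_\cH \le c_\cH\|\cdot\|_\cU$, one bounds this by
\[
\Big( \|E'\|_{L^2(I;\cH)} + k^{-\frac12}\big(\|E^0_+\|_\cH^2 + \textstyle\sum_{n=1}^{N-1}\|[E]^n\|_\cH^2\big)^{\frac12}\Big)\, c_\cH\, C_q\, \|X\|_{L^2(I;\cU)},
\]
where the $k^{-1/2}$ factor arises from an inverse inequality relating the trace norm at interval endpoints to the $L^2(I_n)$ norm (the nodal values and jumps live in a finite-dimensional polynomial space). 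The remaining two terms are routine: $\int_I a(E,X) \le \Gamma \|E\|_{L^2(I;\cU)}\|X\|_{L^2(I;\cU)}$ and $\int_I a(\cI_q u - u,X) \le \Gamma\|\cI_q u - u\|_{L^2(I;\cU)}\|X\|_{L^2(I;\cU)}$. Dividing by $\|X\|_{L^2(I;\cU)}$ and taking the supremum, one obtains
\[
\beta\|\rho\|_{L^2(I;\cQ_1)} \le c\Big( \|E'\|_{L^2(I;\cH)} + k^{-\frac12}\big(\|E^0_+\|_\cH^2 + \textstyle\sum_{n=1}^{N-1}\|[E]^n\|_\cH^2\big)^{\frac12} + \|E\|_{L^2(I;\cU)} + \|\cI_q u - u\|_{L^2(I;\cU)}\Big).
\]
Finally I would insert the bound from Lemma~\ref{lem89} for $\|E'\|_{L^2(I;\cH)}$, the bound from the second lemma (on $\|E^0_+\|_\cH$ and the jumps), and Theorem~\ref{thmdiscr1} for $\|E\|_{L^2(I;\cU)} = \|\cI_q u - U\|_{L^2(I;\cU)} \le c\|u - \cI_q u\|_{L^2(I;\cU)}$, which all combine into the stated right-hand side $c(k^{1/2}\|(\cI_q u - u)'\|_{L^2(I;\cU)} + k^{-1/2}\|\cI_q u - u\|_{L^2(I;\cU)})$; adding back the triangle-inequality term $\|p - \cI^{L^2}_q p\|_{L^2(I;\cQ_1)}$ gives the theorem. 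The only subtlety to be careful about is the dependence on $\beta$ and $c_\cH$ being tracked honestly through the inf-sup step and the $\cH$-vs-$\cU$ norm comparisons in $D_\cH$.
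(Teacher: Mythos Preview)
Your proposal is correct and follows essentially the same approach as the paper: the paper constructs an explicit $\hat V \in \PP^b(I;\cV_2)$ realizing the inf-sup bound (via the linear map $\rho(t) \mapsto \hat V(t) \in \cV^{\perp_{\cV_2}}$ with $b_1(\hat V,q)=(\rho,q)_{\cQ_1}$) and tests the Galerkin relation with it, while you use the equivalent sup formulation directly; both then bound $D_\cH(E,X)$ via $c_\cH$ and the inverse inequality $\|X_+^n\|_\cU \le C_q k^{-1/2}\|X\|_{L^2(I_{n+1};\cU)}$, bound the $a(\cdot,\cdot)$ term by continuity, and finish by inserting Lemma~\ref{lem89}, the jump lemma, and Theorem~\ref{thmdiscr1}. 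The only cosmetic difference is that the paper keeps $\int_I a(u-U,\hat V)$ intact (bounded via $\|u-U\|_{L^2(I;\cU)}$) whereas you split it into $\int_I a(E,X)$ and $\int_I a(\cI_q u - u,X)$, which yields the same estimate after Theorem~\ref{thmdiscr1}.
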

\begin{proof}
From the consistency property \eqref{consis1mixed} we obtain the Galerkin relation 
\[
D_\cH(u-U, X ) + \int_I a(u-U, X )  + \int_I  b_1(X,p-P) = 0 \quad \text{for all}~~X \in \PP^b(I;\cV_2).
\]
Let $\tilde P = \cI_q^{L^2} p$ be the orthogonal projection of $p\in L^2(I;\cQ_1)$ to $\PP^b(I,\cQ_1)$. From the inf-sup property \eqref{assumpB1AA} it follows that for all $t \in I$ there
exists a unique $\hat V(t) \in \cV^{\perp_{\cV_2}} \subset \cV_2$ such that:
\begin{align*}
 b_1(\hat V(t),q)& =(\tilde P(t)-P(t),q)_{\cQ_1} \quad \text{for all}~~q \in \cQ_1 \\
\text{\rm and}~~ \|\hat V(t)\|_{\cU} &\leq \frac{1}{\beta}\|\tilde P(t)-P(t)\|_{\cQ_1}.
\end{align*}
This, $ \tilde P - P \in \PP^b(I;\cQ_1)$ and a tensor-product argument imply $\hat V \in \PP^b(I;\cV_2)$.
Let $E=\cI_q u-U$. By Lemma \ref{LemmaProj} we have
\begin{equation}
D_\cH(E, \hat V ) + \int_I  a(u-U, \hat V )+  \int_I  b_1( \hat V,  p -P)   = 0.
\label{testWdeltaP-3}
\end{equation}
We first consider the first term of \eqref{testWdeltaP-3}. Using the continuous embedding $\cU \hookrightarrow \cH$ we get
 \[
 \left| \int_I (E', \hat V )_\cH\right| \leq  \|E'\|_{L^2(I;\cH)} \|\hat V \|_{L^2(I;\cH)} \leq c_\cH  \|E'\|_{L^2(I;\cH)} \|\hat V \|_{L^2(I;\cU)}.
\]
and using the inverse estimate $\|\hat V^{n}_+  \|_\cU \leq  Ck^{-\frac{1}{2}}  \|\hat V\|_{L^2(I_{n+1};\cU)}$, with  a constant $C$ which depends only on $q$, we get
\begin{align*}
 & |(E^0_+,V_+^0)_\cH|+\sum_{n=1}^{N-1} |([E]^{n},\hat V_+^{n})_\cH |  \leq  c_\cH \|E^{0}_+ \|_\cH\|\hat V^{0}_+\|_\cU + c_\cH \sum_{n=1}^{N-1} \|[E]^n\|_{\cH}\| \hat V^{n}_+\|_\cU
\\ &\leq   C c_\cH k^{-\frac{1}{2}} \|E^{0}_+ \|_\cH \|\hat V\|_{L^2(I_1;\cU)} 
 + Cc_\cH   k^{-\frac{1}{2}} \sum_{n=1}^{N-1}  \|[E]^{n}\|_{\cH}  \|\hat V\|_{L^2(I_{n+1};\cU )}
\\ & \leq  Cc_\cH  k^{-\frac{1}{2}} \big(\|E^{0}_+ \|_\cH^2+ \sum_{n=1}^{N-1} \|[E]^{n}\|_{\cH}^2 \big)^\frac12
\|\hat V \|_{L^2(I;\cU)}.
\end{align*}
For the second term, we have 
\[
\left|\int_I a(u-U, \hat V ) \right| \leq \Gamma \|u-U \|_{L^2(I;\cU)} \|\hat V \|_{L^2(I;\cU)}. 
\]
For the third term we have 
\begin{align}
\|\tilde P-P\|_{L^2(I;\cQ_1)}^2 & = \int_I (\tilde P-P, \tilde P- P)_{\cQ_1} = \int_I (\tilde P-P, p- P)_{\cQ_1} \nonumber 
 = \int_I b_1 (\hat V, p- P)   \nonumber .
\end{align}
Combing these results with the definition of $\hat V$, we get 
\begin{align*}
 & \|\tilde P-P\|_{L^2(I;\cQ_1)}^2  =  \int_I b_1 (\hat V, p- P)= -D_\cH(E, \hat V ) - \int_I  a(u-U, \hat V ) \\
 & \leq c \Big( \|E'\|_{L^2(I;\cH)} +  k^{-\frac{1}{2}}  \big(\|E^{0}_+ \|_\cH^2+ \sum_{n=1}^{N-1} \|[E]^{n}\|_{\cH}^2 \big)^\frac12 + \|u-U \|_{L^2(I;\cU)}\Big) \|\hat V \|_{L^2(I;\cU)} \\
 & \leq \frac{c}{\beta} \Big(\|E'\|_{L^2(I;\cH)}+ k^{-\frac{1}{2}}  \big(\|E^{0}_+ \|_\cH^2+ \sum_{n=1}^{N-1} \|[E]^{n}\|_{\cH}^2 \big)^\frac12 + \|u-U \|_{L^2(I;\cU)}\Big)\|\tilde P-P\|_{L^2(I;\cQ_1)}.
\end{align*}
Combining this with the previous two lemmas, Theorem \ref{thmdiscr1} and the triangle inequality completes the proof.
\end{proof}

\ \\[1ex]
Results for the projection error $\|p -\cI_q^{L^2} p\|_{L^2(I;\cQ_1)}$ with respect to the orthogonal projection are standard. Results for the projection errors $\|(\cI_q u- u)' \|_{L^2(I;\cU)}$ and $\|\cI_q u- u \|_{L^2(I;\cU)}$ are also standard, cf.~Subsection \ref{sectglobal}. Using these we obtain the following sub-optimal discretization error bound.
\begin{theorem}\label{Thmsuboptimal}
Let $(u,p)$ be the solution of \eqref{problemmixed} and $(U,P)$  the solution of \eqref{Divpmixed}-\eqref{Divp1mixed}.
Assume that the solution $(u,p)$  has smoothness properties $u \in  H^m(I;\cU)$, $p \in H^m(I;\cQ_1)$,  for an $m$ with $1 \leq m \leq q$. The following holds:
\begin{equation} \label{CeaboundLMbad}  
\|p-P\|_{L^2(I;\cQ_1)} \leq c k^{m-\frac{1}{2}} \big(\|u^{(m)}\|_{L^2(I;\cU)} + k^\frac{1}{2} \|p^{(m)}\|_{L^2(I;\cQ_1)}\big) 
\end{equation}
with a constant $c$ which depends only on $q,\gamma,\Gamma,\beta$.
\end{theorem}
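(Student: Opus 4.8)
The plan is to obtain \eqref{CeaboundLMbad} directly from the estimate of the preceding theorem,
\[
\|p-P\|_{L^2(I;\cQ_1)} \le c\big(k^{\frac12}\|(\cI_q u-u)'\|_{L^2(I;\cU)} + k^{-\frac12}\|\cI_q u-u\|_{L^2(I;\cU)}\big) + \|p-\cI_q^{L^2}p\|_{L^2(I;\cQ_1)},
\]
by inserting standard approximation properties of the two projections $\cI_q$ and $\cI_q^{L^2}$. Both projections are defined interval-wise and reproduce polynomials of degree $q-1$ on each $I_n$, so for data with $H^m$-regularity in time, $1\le m\le q$, the usual local interpolation bounds apply; it only remains to assemble them.

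Concretely, I would invoke three ingredients, all already used elsewhere in the paper or quoted from \cite{Thomee,Schwab}. First, the $L^2(I;\cU)$-bound for the temporal interpolation error underlying Theorem~\ref{mainthm1},
\[
\|u-\cI_q u\|_{L^2(I;\cU)} \le c\, k^{m}\, \|u^{(m)}\|_{L^2(I;\cU)}.
\]
Second, the analogous bound for the time derivative, obtained exactly as the $\cV'$-version cited in \cite[p.~214]{Thomee} and used in the proof of \eqref{resA},
\[
\|(u-\cI_q u)'\|_{L^2(I;\cU)} \le c\, k^{m-1}\, \|u^{(m)}\|_{L^2(I;\cU)};
\]
here exactly one power of $k$ is lost by differentiation, which together with the $k^{-\frac12}$ factor in the displayed estimate is the source of the sub-optimality. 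Third, the classical $L^2$-projection estimate
\[
\|p-\cI_q^{L^2}p\|_{L^2(I;\cQ_1)} \le c\, k^{m}\, \|p^{(m)}\|_{L^2(I;\cQ_1)}.
\]

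Substituting these into the displayed estimate gives $k^{\frac12}\cdot k^{m-1}=k^{m-\frac12}$ for the first term, $k^{-\frac12}\cdot k^{m}=k^{m-\frac12}$ for the second, and $k^{m}=k^{m-\frac12}\cdot k^{\frac12}$ for the pressure term; collecting the common factor $k^{m-\frac12}$ yields
\[
\|p-P\|_{L^2(I;\cQ_1)} \le c\, k^{m-\frac12}\big(\|u^{(m)}\|_{L^2(I;\cU)} + k^{\frac12}\|p^{(m)}\|_{L^2(I;\cQ_1)}\big),
\]
which is \eqref{CeaboundLMbad}, with $c$ inheriting dependence only on $q,\gamma,\Gamma,\beta$ (the constant $c_\cH$ appearing in the preceding theorem being a fixed embedding constant that can be absorbed). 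There is essentially no obstacle in this final step — the substantial work was carried out in the two lemmas and the preceding theorem. The only points requiring a little care are checking that all three projection bounds are valid across the full range $1\le m\le q$ (degenerating to mere stability estimates at $m=1$, in particular $(\cI_q u)'=0$ when $q=1$), and tracking the constants so that the stated dependence on the data is respected.
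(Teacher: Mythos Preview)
Your proposal is correct and follows exactly the paper's approach: the paper does not give a formal proof of this theorem but states it as an immediate consequence of the preceding theorem combined with the standard projection error bounds for $\cI_q$ and $\cI_q^{L^2}$, which is precisely what you do. Your observation about the constant $c_\cH$ is apt; the paper silently absorbs this embedding constant into the generic $c$.
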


\subsection{Optimal result}
Below we again use the notation introduced in Subsection~\ref{sectmixed}, in particular the bilinear form $b_1(\cdot,\cdot)$ and the corresponding operator $B_1: \cU \to Q_1'$, with $\ker(B_1)=\cV_1$. 
We introduce the $\cH$-orthogonal projection $P_{\cH}: \cH \rightarrow \overline{\cV}^{\cH}=:\cV_\cH$ onto the closed subspace $\cV_\cH$ of $\cH$, and define $P_{\cH}^\perp:={\rm id_\cH}-P_{\cH}:\, \cH \to \cH$. 
The following (regularity type) assumption on  $P_\cH$ is crucial for our analysis.
\begin{assumption}\label{AssContP} $P_\cH \cV_2 \subset \cV_1$ and there exists a constant $c_P$ such that
\[
 \|P_{\cH}v\|_\cU \leq c_P\|v \|_\cU \quad \text{for all}~~v \in \cV_2.
\]
\end{assumption}
\emph{In the remainder of this subsection  we assume that Assumption~\ref{AssContP} is satisfied}. Using this assumption we derive an optimal discretization error bound for the Lagrange multiplier. 
\begin{example} \rm \label{H2regularity}
We consider this assumption for Example \ref{example3}. In this case we have $\cU=H^1(\Omega)^d$, $\cH=L^2(\Omega)^d$, $\cV_1=\{\, u \in H^1(\Omega)^d~|~\Div u=0\,\}$, $\cV_2=H_0^1(\Omega)^d$ and $\cV=\cV_1 \cap \cV_2=\{\, u \in H_0^1(\Omega)^d~|~\Div u=0\,\}$. 
 The space $\cV_\cH $ is given by \[
\cV_\cH = \overline{\cV}^\cH = \{\,u\in L^2(\Omega)^d~|~ \Div u =0 , u\cdot n = 0\text{ on }\partial \Omega \,\},
\]
where $n$ denotes the normal of $\partial \Omega$, cf. \cite[p. 29]{GiraultRaviart}. 
The projection $P_\cH$ (also known as the Leray projector) can be characterized as follows. For given  $u\in L^2(\Omega)^d$, $P_\cH u = u +\nabla q$, with $q$ such that $(u+\nabla q,\nabla w)_{L^2(\Omega)^d}=0$ for all $w \in H^1(\Omega)/\Bbb{R}$. When $u \in H^1_0(\Omega)^d$, this $q$ solves the Poisson problem with homogeneous Neumann boundary conditions:  \begin{eqnarray*}
-\Delta q &=&\Div u \quad \text{on}~~\Omega \\
\frac{\partial q}{\partial n} &=& 0  \quad\text{on }~~\partial \Omega \\
 \int_\Omega q &=&0.
\end{eqnarray*}
We \emph{assume $H^2$-regularity of this problem}, hence, 
\[
 \|\nabla q\|_{H^1(\Omega)^d} \leq  \|q\|_{H^2(\Omega)}\leq c \|\Div u\|_{L^2(\Omega)} \leq  c\|u\|_{H^1(\Omega)^d},
\]
 which implies that $P_\cH: H^1_0(\Omega)^d \rightarrow H^1(\Omega)^d$ is continuous. This and $\div P_\cH u = \div u + \Delta q =0$ imply that Assumption \ref{AssContP} holds. 
{This $H^2$-regularity assumption, which holds when $\Omega$ is convex  or has a $C^2$-boundary, is exactly the same as in \cite[Assumption 3.1]{GuberovicSchwabStevenson2014}. There it is used in the well-posedness analysis of time-dependent Stokes equations, cf. \cite[Theorem 3.5]{GuberovicSchwabStevenson2014}. }
\end{example}
\ \\
\begin{theorem}\label{LagBound}
Assume that the solution $(u,p)$ of \eqref{problemmixed} has regularity $u \in H^1(I;\cH)$ and $Au\in L^2(I;\cV_\cH)'$. Let  $(U,P)$ be the solution of \eqref{Divpmixed}-\eqref{Divp1mixed}.
The following holds: \[
 \| p-P\|_{ L^2(I;\cQ_1)}  \leq   \frac{\Gamma(1+ c_P)}{\beta} \|u-U\|_{L^2(I;\cU)} +  \|p-\cI^{L^2}_q p \|_{L^2(I;\cQ_1)},
\]
where $\cI_q^{L^2}$ denotes the $L^2$-orthogonal projection from $L^2(I)$ to $\PP^b(I)$.
\end{theorem}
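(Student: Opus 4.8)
The plan is to mimic the structure of the proof of Theorem~\ref{Thmsuboptimal} (the sub-optimal result), but to exploit Assumption~\ref{AssContP} to get rid of the factor $k^{-\frac12}$ that came from the jump terms and the discrete time-derivative estimates. As before, I would start from the Galerkin relation obtained from the consistency property \eqref{consis1mixed}: for all $X\in\PP^b(I;\cV_2)$,
\[
 D_\cH(u-U,X)+\int_I a(u-U,X)+\int_I b_1(X,p-P)=0.
\]
Set $\tilde P=\cI_q^{L^2}p$ and use the inf-sup property \eqref{assumpB1AA} to pick, for a.e. $t$, a $\hat V(t)\in\cV^{\perp_{\cV_2}}\subset\cV_2$ with $b_1(\hat V(t),q)=(\tilde P(t)-P(t),q)_{\cQ_1}$ and $\|\hat V(t)\|_\cU\le\beta^{-1}\|\tilde P(t)-P(t)\|_{\cQ_1}$; a tensor-product argument gives $\hat V\in\PP^b(I;\cV_2)$. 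Then
\[
 \|\tilde P-P\|_{L^2(I;\cQ_1)}^2=\int_I b_1(\hat V,p-P)=-D_\cH(u-U,\hat V)-\int_I a(u-U,\hat V).
\]

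The key new idea is how to handle the term $D_\cH(u-U,\hat V)$ without differentiating $U$. Here I would use Assumption~\ref{AssContP}: since $P_\cH\hat V\in\cV_1$ and (because $\hat V\in\cV_2$) also $P_\cH\hat V\in\cV_2$ when combined with the structure, we actually get $P_\cH\hat V\in\cV_1\cap\cV_2=\cV$. Wait — more precisely, $\hat V\in\cV_2$ so $P_\cH\hat V\in P_\cH\cV_2\subset\cV_1$; and one checks $P_\cH\hat V$ also lies in $\cV_2$? The cleaner route: write $\hat V=P_\cH\hat V+P_\cH^\perp\hat V$ and observe that $b_1(P_\cH^\perp\hat V,\cdot)$ behaves well, while the $\cV$-component $P_\cH\hat V$ can be used as a test function where the Galerkin orthogonality in $\PP^b(I;\cV)$ kills $D_\cH(u-U,\cdot)$ exactly. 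Concretely, since $u-U$ satisfies the Galerkin relation and $D_\cH(u-U,X)=0$ for $X\in\PP^b(I;\cV)$ (via Lemma~\ref{LemmaProj} applied to the constrained formulation and $E=\cI_q u-U\in\PP^b(I;\cV)$, noting $D_\cH(u-\cI_q u,\cdot)=0$), one replaces $D_\cH(u-U,\hat V)$ by $D_\cH(u-U,\hat V-P_\cH\hat V)=D_\cH(u-U,P_\cH^\perp\hat V)$; but $P_\cH^\perp\hat V$ takes values in the $\cH$-orthogonal complement of $\cV_\cH$, so the $D_\cH$ pairing with $u-U$ (whose derivative/jumps pair through the $\cH$-inner product) should telescope or vanish against it, leaving only the $\int_I a(u-U,\cdot)$ contributions. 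Using the regularity hypothesis $Au\in L^2(I;\cV_\cH')$ together with $u'\in L^2(I;\cH)$ is what makes $D_\cH(u-U,\cdot)$ meaningful and lets the decomposition go through. This reduces everything to
\[
 \|\tilde P-P\|_{L^2(I;\cQ_1)}\le\frac{\Gamma}{\beta}\big(\|u-U\|_{L^2(I;\cU)}+\|P_\cH\hat V\|_{L^2(I;\cU)}/\|\hat V\|\cdots\big),
\]
and by $\|P_\cH\hat V\|_\cU\le c_P\|\hat V\|_\cU$ and $\|P_\cH^\perp\hat V\|_\cU\le(1+c_P)\|\hat V\|_\cU$ (or directly $\|\hat V\|_\cU$ appearing with a combined factor $1+c_P$), dividing by $\|\tilde P-P\|_{L^2(I;\cQ_1)}$ yields $\|\tilde P-P\|_{L^2(I;\cQ_1)}\le\frac{\Gamma(1+c_P)}{\beta}\|u-U\|_{L^2(I;\cU)}$. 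Finally a triangle inequality $\|p-P\|\le\|p-\tilde P\|+\|\tilde P-P\|$ with $\tilde P=\cI_q^{L^2}p$ gives the claimed bound.

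The main obstacle I anticipate is making the cancellation $D_\cH(u-U,\hat V)=D_\cH(u-U,P_\cH^\perp\hat V)$ (and then its vanishing or controllability) fully rigorous: one must verify that $P_\cH\hat V$ is an admissible test function in $\PP^b(I;\cV)$ — i.e. that it is piecewise polynomial in time with values in $\cV$, which needs $P_\cH$ time-independent (true) and $P_\cH\cV_2\subset\cV_1$ plus $P_\cH\cV_2\subset\cV_2$ so that $P_\cH\cV_2\subset\cV$ — and that the remaining pairing with $P_\cH^\perp\hat V$ is either zero (because $u-U$ has its "dynamic" part in $\cV_\cH$, orthogonal to the range of $P_\cH^\perp$) or absorbs cleanly. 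Checking $P_\cH\hat V\in\cV_2$ for $\hat V\in\cV_2$ may require an extra observation specific to the examples (e.g. that the Leray projector maps $H_0^1$ into itself), which is exactly the content verified in Example~\ref{H2regularity}; the abstract statement hides this in Assumption~\ref{AssContP}. Once that admissibility is secured, the rest is bookkeeping with continuity constants $\Gamma$, $\beta$, $c_P$.
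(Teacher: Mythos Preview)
Your overall strategy---split $\hat V$ via the $\cH$-orthogonal projection $P_\cH$ and kill the $D_\cH$ contribution---is the same idea the paper uses, but your execution has two real gaps. First, the claim ``$D_\cH(u-U,X)=0$ for $X\in\PP^b(I;\cV)$'' is false: Lemma~\ref{LemmaProj} only gives $D_\cH(u-\cI_q u,X)=0$, and the Galerkin relation on $\cV$ gives $D_\cH(u-U,X)+\int_I a(u-U,X)=0$, not the $D_\cH$ part alone. Second, and more seriously, you need $P_\cH\hat V$ to be an admissible test function, and you propose to check $P_\cH\hat V\in\cV=\cV_1\cap\cV_2$. Assumption~\ref{AssContP} only guarantees $P_\cH\cV_2\subset\cV_1$; it does \emph{not} give $P_\cH\cV_2\subset\cV_2$. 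In the Stokes example the Leray projector adds a gradient $\nabla q$ with Neumann data, so $P_\cH$ does \emph{not} preserve the zero trace, and $P_\cH\hat V\notin H_0^1(\Omega)^d=\cV_2$ in general. Your anticipated obstacle is therefore fatal as stated.

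The paper's fix is precisely where the regularity hypothesis $Au\in L^2(I;\cV_\cH')$ enters: using density of $\cV$ in $\cV_\cH$, the Galerkin relation $D_\cH(u-U,X)+\int_I a(u-U,X)=0$ extends from $X\in\PP^b(I;\cV)$ to $X\in\PP^b(I;\cV_\cH)$, and then (since $P_\cH\cV_2\subset\cV_\cH\cap\ker B_1$) to $X\in\PP^b(I;\cV_2+P_\cH\cV_2)$ with the $b_1$ term. The paper then tests with $P_\cH^\perp\hat V$ directly, observing $b_1(\hat V,\cdot)=b_1(P_\cH^\perp\hat V,\cdot)$ because $P_\cH\hat V\in\ker B_1$. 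The vanishing that actually occurs is $D_\cH(U-u,P_\cH^\perp\hat V)=D_\cH(U-\cI_q u,P_\cH^\perp\hat V)=0$, since $U-\cI_q u\in\cV\subset\cV_\cH$ is $\cH$-orthogonal to the range of $P_\cH^\perp$; this is the correct version of your ``telescope or vanish'' remark. What remains is exactly $\int_I a(u-U,P_\cH^\perp\hat V)$, which carries the factor $(1+c_P)$. Your route can be repaired along the same lines (extend Galerkin to $\cV_\cH$ and test there with $P_\cH\hat V$), but as written the admissibility of $P_\cH\hat V$ is the missing step, and the $D_\cH=0$ claim is simply wrong.
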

\begin{proof}
From the consistency property \eqref{consis1mixed} we obtain the Galerkin relation 
\[
D_\cH(u-U, X ) + \int_I a(u-U, X )   = 0 \quad \text{for all}~~X \in \PP^b(I;\cV).
\]
The regularity assumptions $u',Au\in L^2(I;\cV_\cH)'$ and the density of $\cV$ in $\cV_\cH$ imply
$K(U,\cdot)\in \PP^b(I;\cV_\cH)' $ and 
\[
D_\cH(u-U, X ) + \int_I a(u-U, X )   = 0 \quad \text{for all}~~X \in \PP^b(I;\cV_\cH).
\]
From $P_\cH \cV_2 \subset \cV_\cH$, $P_\cH\cV_2 \subset \ker(B_1)$ and the consistency property \eqref{consis1mixed} we obtain the Galerkin relation 
\begin{equation} \label{gg}
D_\cH(u-U, X ) + \int_I a(u-U, X )  + \int_I  b_1(X,p-P) = 0 \quad \forall~X \in \PP^b(I;\cV_2 + P_\cH\cV_2).
\end{equation}
Let $\tilde P = \cI_q^{L^2} p$ be the orthogonal projection of $p\in L^2(I;\cQ_1)$ to $\PP^b(I,\cQ_1)$. From the inf-sup property \eqref{assumpB1AA} it follows that for all $t \in I$ there
exists a unique $\hat V(t) \in \cV^{\perp_{\cV_2}} \subset \cV_2$ such that:
\begin{equation} \label{gg1}
 \begin{split}
 b_1(\hat V(t),q)& =(\tilde P(t)-P(t),q)_{\cQ_1} \quad \text{for all}~~q \in \cQ_1 \\
\text{\rm and}~~ \|\hat V(t)\|_{\cU} &\leq \frac{1}{\beta}\|\tilde P(t)-P(t)\|_{\cQ_1}.
\end{split}
\end{equation}
This, $ \tilde P - P \in \PP^b(I;\cQ_1)$ and a tensor-product argument imply $\hat V \in \PP^b(I;\cV_2)$. From $P_\cH\cV_2 \subset \ker(B_1)$ it follows that for $v \in \cV_2$ we have
\[  b_1(v,p)=b_1(P_\cH^\perp v,p) \quad \text{for all}~~p \in \cQ_1. \]
Using this and \eqref{gg}, \eqref{gg1} we get
\begin{align}
\|\tilde P-P\|_{L^2(I;\cQ_1)}^2 & = \int_I (\tilde P-P, \tilde P- P)_{\cQ_1} = \int_I (\tilde P(t)-P(t), p(t)- P(t))_{\cQ_1}\,dt \nonumber \\
 & = \int_I b_1 (\hat V(t), p(t)- P(t))\,dt =  \int_I b_1 (P_\cH^\perp \hat V(t), p(t)- P(t))\,dt  \nonumber \\
 & =\int_I a(U-u, P_\cH^\perp \hat V) + D_\cH(U-u, P_\cH^\perp \hat V). \label{duality}
\end{align}
For the first term we get
\begin{align*}
 \int_I a(U-u, P_\cH^\perp \hat V) & \leq \Gamma \|U-u\|_{L^2(I;\cU)} (1+c_P) \|\hat V\|_{L^2(I;\cU)} \\ & \leq \frac{\Gamma(1+c_P)}{\beta}\|U-u\|_{L^2(I;\cU)} \|\tilde P-P\|_{L^2(I;\cQ_1)}.
\end{align*}
For the second term we use Lemma \ref{LemmaProj} and the  fact that $D_\cH(\cdot,\cdot)$ consists of sums and integrals of $\cH$ scalar products, hence 
\begin{equation} \label{oo} 
D_\cH(U-u, P_\cH^\perp \hat V)=D_\cH(U-\cI_q u, P_\cH^\perp \hat V)=  D_\cH(P_\cH^\perp(U-\cI_q u),\hat V) = 0,
\end{equation}
where the last equality follows from $U(t)-\cI_q u(t) \in \ker(B)=\cV$ and $P_\cH^\perp(U'(t)-u'(t))= \big(P_\cH^\perp(U-\cI_q u)\big)'(t)$. Thus we obtain
\[
 \|\tilde P-P\|_{L^2(I;\cQ_1)} \leq  \frac{\Gamma(1+c_P)}{\beta}\|U-u\|_{L^2(I;\cU)}.
\]
This, in combination with the triangle inequality concludes the proof.
\end{proof}
 \ \\[1ex]

Results for the projection error $\|p -\cI_q^{L^2} p\|_{L^2(I;\cQ_1)}$ with respect to the orthogonal projection are standard. Using these  and \eqref{CeaboundA} we obtain the following optimal discretization error bound.
\begin{theorem} \label{Thmpoptimal}
Let $(u,p)$ be the solution of \eqref{problemmixed} and $(U,P)$  the solution of \eqref{Divpmixed}-\eqref{Divp1mixed}.
Assume that the solution $(u,p)$  has smoothness properties $Au\in L^2(I;\cV_\cH)'$, $u \in  H^m(I;\cU)$, $p \in H^m(I;\cQ_1)$,  for an $m$ with $1 \leq m \leq q$. The following holds:
\begin{align}
\|p-P\|_{L^2(I;\cQ_1)} &\leq c \left(\sum_{n=1}^Nk_n^{2m} \left(\left(\frac{1+c_P}{\beta} \right)^2\|u^{(m)}\|_{L^2(I_n;\cU)}^2 + \|p^{(m)}\|_{L^2(I_n;\cQ_1)}^2 \right)\right)^{\frac{1}{2}}\nonumber
\\& \leq c k^m \left(\frac{1+c_P}{\beta} \|u^{(m)}\|_{L^2(I;\cU)} + \|p^{(m)}\|_{L^2(I;\cQ_1)}\right) \label{CeaboundLM}
\end{align}
for some $c>0$, which depends only on $q,\gamma$ and $\Gamma$.
\end{theorem}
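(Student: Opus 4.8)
The plan is to derive \eqref{CeaboundLM} by feeding the global error bound of Theorem~\ref{mainthm1} and a standard $L^2$-projection estimate into the abstract inequality of Theorem~\ref{LagBound}. First I would verify that the hypotheses of Theorem~\ref{LagBound} hold under the present assumptions: the regularity $Au\in L^2(I;\cV_\cH)'$ is assumed directly, and $u\in H^m(I;\cU)$ with $m\geq1$ gives, on the bounded interval $I$, $u\in H^1(I;\cU)$, which together with the continuous embedding $\cU\hookrightarrow\cH$ yields $u\in H^1(I;\cH)$. Hence Theorem~\ref{LagBound} applies and
\[
 \|p-P\|_{L^2(I;\cQ_1)}\leq\frac{\Gamma(1+c_P)}{\beta}\,\|u-U\|_{L^2(I;\cU)}+\|p-\cI_q^{L^2}p\|_{L^2(I;\cQ_1)}.
\]

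For the first term on the right I would invoke the interval-localized estimate \eqref{CeaboundA} of Theorem~\ref{mainthm1}, valid since $u\in H^m(I;\cU)$ with $1\leq m\leq q$, giving $\|u-U\|_{L^2(I;\cU)}\leq c\big(\sum_{n=1}^N k_n^{2m}\|u^{(m)}\|_{L^2(I_n;\cU)}^2\big)^{1/2}$ with $c$ depending only on $q,\gamma,\Gamma$. For the second term I would use that $\cI_q^{L^2}$ restricts on each $I_n$ to the $L^2(I_n)$-orthogonal projection onto $\PP_{q-1}(I_n)$, which reproduces polynomials of degree $q-1\geq m-1$; a standard Bramble--Hilbert and scaling argument then gives $\|p-\cI_q^{L^2}p\|_{L^2(I_n;\cQ_1)}\leq c\,k_n^m\|p^{(m)}\|_{L^2(I_n;\cQ_1)}$ for each $n$ (using $p\in H^m(I;\cQ_1)$), with $c$ depending only on $q$; squaring and summing over $n$ yields $\|p-\cI_q^{L^2}p\|_{L^2(I;\cQ_1)}\leq c\big(\sum_{n=1}^N k_n^{2m}\|p^{(m)}\|_{L^2(I_n;\cQ_1)}^2\big)^{1/2}$.

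Substituting these two bounds into the displayed inequality and using $\sqrt a+\sqrt b\leq\sqrt2\,\sqrt{a+b}$ on the two $\ell^2$-type sums gives the first inequality in \eqref{CeaboundLM}, with the factor $\frac{1+c_P}{\beta}$ attached to the $u^{(m)}$-contribution; replacing $k_n$ by $k$ in every summand and factoring out $k^m$ produces the coarser second inequality. There is no genuine obstacle here; the only points requiring attention are the verification of the hypotheses of Theorem~\ref{LagBound} above and, as reflected in the statement, the bookkeeping needed to keep the dependence on $\beta$ and $c_P$ explicit so that the residual constant $c$ genuinely depends only on $q,\gamma,\Gamma$ --- which is why the $u$- and $p$-contributions are estimated separately rather than absorbed into one generic constant.
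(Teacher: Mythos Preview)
Your proposal is correct and follows exactly the approach the paper takes: the paper presents Theorem~\ref{Thmpoptimal} as an immediate consequence of Theorem~\ref{LagBound}, the global error bound \eqref{CeaboundA}, and standard $L^2$-projection estimates for $\cI_q^{L^2}$. Your additional care in verifying the hypotheses of Theorem~\ref{LagBound} and tracking the dependence of the constant on $\beta$ and $c_P$ is appropriate and matches the intent of the stated result.
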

\ \\
{
\begin{remark}\label{RemH2Reg} \rm
For the derivation of an optimal error bound for the Lagrange multiplier, as in Theorem~\ref{Thmpoptimal}, for the general abstract (saddle point) problem \eqref{problemmixed} Assumption~\ref{AssContP} is sufficient. For the specific case of the Stokes equations (Example \ref{example3}) such optimal error bounds can be derived under weaker assumptions than the $H^2$-regularity discussed in  Example~\ref{H2regularity}. The analysis, which is a topic of current research, uses additional specific properties of the Stokes problem. 
\end{remark}
}
\section{Fully discrete problem}\label{SectionFullDisc}
In applications of the abstract setting one typically has infinite dimensional spaces $\cU$, $\cV$, $\cQ$, $\cQ_1$, $\cV_2$ in the time-discrete problems \eqref{Divp}-\eqref{Divp1} (constrained formulation) and \eqref{Divpmixed}-\eqref{Divp1mixed} (mixed formulation). These  spaces are then replaced by finite dimensional (e.g. finite element) spaces to obtain a feasible method. Below we describe this fully discrete setting.
\\
\subsection{Fully discrete constrained formulation} \label{sectfullydiscr}
In this subsection we introduce the spacial discretization for the constrained problem \eqref{Divp}-\eqref{Divp1}. We assume that a conforming finite element method is used, i.e.~we have a finite dimensional subspace $\cU^h \subset \cU$. We define $\cV^h = \cU^h \cap \cV$ and $\cQ^h = B\cU^h \subset \cQ'$. 
We assume that we have \emph{spatial} projection operators $\bI_{\cQ^h}: \cQ' \rightarrow \cQ^h$ and  $\bI_{\cU^h}: \cU \rightarrow \cU^h$, which satisfy $\bI_{\cQ^h} B = B \bI_{\cU^h}$. If we assume some additional spatial regularity on $g$ and $u_0$, then it is sufficient if $\bI_{\cU^h}$ and $\bI_{\cQ^h}$ are only defined on subsets of $\cU$ and $\cQ'$ and $u_0\in \cD(\bI_{\cU^h})\subset \cU$, $g(t) \in \cD(\bI_{\cQ^h})\subset \cQ'$ for almost all $t\in I$. In applications,  these spatial projections typically are nodal interpolations, cf. Example~\ref{Extyp} below.

The fully  discrete version of \eqref{problem}  reads: find $U_h \in \PP^b(I;\cU^h)$ such that
\begin{align} 
K(U_h, X_h) & = \int_I f(X_h) + (\bI_{\cU^h} u_0,(X_h)_+^0)_\cH \quad \text{for all}~~X_h \in \PP^b(I;\cV^h) \label{FullDivp} \\
B U_h & = \cI_q \bI_{\cQ^h} g.  \label{FullDivp1}
\end{align}
The commutation property $\bI_{\cQ^h} B = B \bI_{\cU^h}$ implies the important consistency property $\cI_q B(U-\bI_{\cU^h} u)=0$ for the constraint discretization (in space and time).
The fully discrete problem \eqref{FullDivp}-\eqref{FullDivp1} gives rise to a system of linear equations. For practical purposes it is useful to have a (cheap) way to compute for $g_h \in \cQ^h$ a $G_h\in \cU^h$ with $B G_h = g_h$. We will denote this operator by $\bL_{\cQ^h}$. 
Provided one has such an operator, \eqref{FullDivp}-\eqref{FullDivp1} is equivalent to finding $W_h \in \PP^b(I;\cV^h)$ such that
\begin{equation}
K(W_h, X_h)  = \int_I f(X_h) + (\bI_{\cU^h} u_0,(X_h)_+^0)_\cH - K(\cI_{q} G_h, X_h) \quad \forall~X_h \in \PP^b(I;\cV^h) 
\label{FullDiscConst}
\end{equation}
with $G_h = \bL_{\cQ^h} \bI_{\cQ^h} g$. We then have $U_h=W_h + \cI_{q} G_h$. Note that in \eqref{FullDiscConst} both the trial and test space are $\PP^b(I;\cV^h)$.
The global in time equation \eqref{FullDiscConst} does not need to be solved in this form. It has a lower block triangular structure, cf.~definition of $K(\cdot,\cdot)$ and $D_\cH(\cdot,\cdot)$. Thus \eqref{FullDiscConst} can be solved sequentially by solving for each $n=1,\dots,N$
\begin{equation}
\begin{split}
&\int_{I_n} (W_{h,n}', X_h)_\cH + ( (W_{h,n})_+^{n-1},(X_h)_+^{n-1})_\cH + \int_{I_n} a(W_{h,n}, X_h)  
\\ &= \int_{I_n} f(X_h) + ( W_{h,n-1}^{n-1},(X_h)_+^{n-1})_\cH - \int_{I_n} ( G_h', X_h)_\cH - 
\int_{I_n} a(\cI_q G_h,X_h)
\end{split}
\label{nDiscConst}
\end{equation}
for all $X_h \in \PP_{q-1}(I_n;\cV^h)$, where $W_{h,n}=W_h|_{I_n}$ and $W_{h,0}^0 = \bI_{\cU^h} u_0 - G_h(0)$.
Typically $g$ is given explicitly, but $g'$ is not, thus $G_h'$ is also not known explicitly. Hence, it is often convenient to rewrite the term $\int_{I_n} ( G_h', X_h)_\cH$ using integration by parts.
Note that solving \eqref{nDiscConst} requires explicit computations in $\cV^h$ which may or may not be easy in a practical setting. 

\begin{example} \label{Extyp} \rm 
We outline the fully discrete problem for the specific case given in Example \ref{Exheat}.  Let $\Omega$ be a polygonal domain and, for a given simplicial triangulation $\TT_h$ and  $r\in \NN$,
\[
\cU^{h} = \{u \in H^1(\Omega)~|~u|_{T_s} \in  \PP_r(T_s) \text{ for all } ~T_s \in \TT_h\,\}
\]
a standard $H^1$-conforming finite element space.
In order to express \eqref{nDiscConst} as a linear system, we need an explicit basis of 
\[
\cV^{h} =\cU^{h} \cap H^1_0(\Omega= \{u \in H^1_0(\Omega)~|~ u|_{T_s} \in  \PP_r(T_s)\text{ for all } ~T_s \in \TT_h\, \}.
\]
Taking a nodal basis for $\cU^h$ and omitting the nodes on the boundary $\partial \Omega$ yields such a basis. The operator $\bL_{\cQ^h} \bI_{\cQ^h}|_{H^\frac12(\partial \Omega)\cap C(\partial \Omega)} $ can be taken to be a Lagrange interpolation on operator $\partial \Omega$, cf. \cite[Section 3.2.2]{ErnGuermond}. The operator $\bI_{\cU^h}|_{H^1(\Omega)\cap C(\Omega)}$, which is used for the initial data, can be taken to be a nodal  interpolation operator. For these choices the commutation property $\bI_{\cQ^h} B = B \bI_{\cU^h}$, with $B$ the trace operator on $\partial \Omega$, holds.
\end{example}
\ \\[1ex]
For the Stokes equation (Examples \ref{ExStokes}, \ref{example3}) one would need an explicit feasible  basis for the space 
\[
\cV^{h} = \{u \in H^1_0(\Omega)^d|\div u =0,~~u|_{T_s} \in  \PP_r(T_s)^d\text{ for all }~ T_s \in \TT_h\,\},
\]
i.e., of a space of divergence free finite elements. In many cases this is rather cumbersome.  
Furthermore, computing $\bL_{\cQ^h} \bI_{\cQ^h}$ would involve finding $G_h$ such that $\div G_h = \bI_{\cQ^h} g$, which  also has certain disadvantages.  These difficulties can be avoided by using the mixed method which will be presented in the next subsection.

\subsection{Fully discrete mixed formulation} \label{sectdiscrete}
In this subsection we introduce the spatial discretization for the mixed  problem \eqref{Divpmixed}-\eqref{Divp1mixed}.  As in the previous section,  we assume that a conforming finite element method is used, i.e. we have finite dimensional subspaces $\cU^h \subset \cU$ and $\cQ_1^h \subset \cQ_1$. We define $\cQ^h_2 = B_2 \cU^h \subset \cQ'_2$ and  $\cV^h_2 = \cU^h \cap \cV_2$. For existence of a unique solution we assume ($h$-dependent) inf-sup stability of the  pair $(\cV^h_2,\cQ_1^h)$ with respect to the constraint operator that corresponds to the implicitly treated constraints, i.e,  we assume that there is a $\tilde \beta_h > 0$  such that
\begin{equation} \label{assumpB1AAh}
\inf_{q_1 \in\cQ_1^h} \sup_{u_h \in \cV_2^h} \frac{b_1(u_h,q_1)}{\|u_h\|_{\cU} \|q_1\|_{\cQ_1}} \geq \tilde \beta_h.
\end{equation}
Due to the different ways of treating $b_1$ and $b_2$ we will use \eqref{split1A}-\eqref{split2A} instead of \eqref{Divp1mixed}. Note that $\cQ_1^h$ is taken to be a subspace of $\cQ_1$, which is the solution space for the Lagrange multiplier, whereas $\cQ_2^h$ is taken to be a subspace of $\cQ_2'$ (as in the previous subsection). We assume that we have a projection $\bI_{\cQ^h_2}: \cQ_2' \rightarrow \cQ_2^h$ and a projection $\bI_{\cU^h}: \cU \rightarrow \cU^h$, which satisfy $\bI_{\cQ_2^h} B = B \bI_{\cU^h}$. 

The fully  discrete version of \eqref{problem}  reads: find $U_h \in \PP^b(I;\cU^h)$, $P_h\in \PP^b(I;\cQ^h_1)$ such that
\begin{align} 
K(U_h, X_h)+\int_I b_1(X_h,P_h) & = \int_I f(X_h) + (\bI_{\cU^h} u_0,(X_h)_+^0)_\cH \quad \text{for all}~~X_h \in \PP^b(I;\cV^h_2) \nonumber \\
\int_I b_1(U_h,R_h) & = \int_I (\cI_q g_1)(R_h) \label{MixedFullDivp2}
\quad \text{for all}~~R_h \in \PP^b(I;\cQ^h_1) 
 \\
B_2 U_h  & = \cI_q \bI_{\cQ_2^h} g_2  \nonumber
\end{align}
 The fully discrete problem \eqref{MixedFullDivp2} gives rise to a system of linear equations. The explicit constraints are treated in the same way as in Subsection~\ref{sectfullydiscr}, i.e., we assume a (cheap) way to compute for $g_h \in \cQ^h_2$ a $G_h\in \cU^h$ with $B_2 G_h = g_h$. We denote this operator by $\bL_{\cQ^h_2}$. 
Provided one has such an operator, \eqref{MixedFullDivp2} is equivalent to finding $W_h \in \PP^b(I;\cV^h_2)$, $P_h\in \PP^b(I;\cQ^h_1)$ such that for all $X_h \in \PP^b(I;\cV^h_2)$:
\begin{align}
& K(W_h, X_h) +  \int_I b_1(X_h, P_h) = \int_I f(X_h) + (\bI_{\cU^h} u_0,(X_h)_+^0)_\cH - K(\cI_{q} G_h, X_h) 
\label{MixedFullDiscConst}
\\  & \int_I b_1(W_h,R_h)  = \int_I (\cI_q g_1)(R_h) - \int_I b_1(\cI_q G_h,R_h)
\quad
\text{for all}~~R_h \in \PP^b(I;\cQ^h_1), \label{MixedFullDiscConst2}
\end{align}
with $G_h = \bL_{\cQ^h_2} \bI_{\cQ^h_2} g_2$. Assumption \eqref{assumpB1AAh} ensures that this system has a unique solution. We have $U_h=W_h + \cI_{q} G_h$.
The global in time equation \eqref{MixedFullDiscConst}-\eqref{MixedFullDiscConst2}
can be solved sequentially by solving for each $n=1,\dots,N$
\begin{eqnarray}
&&\begin{split}
&\int_{I_n} (W_{h,n}', X_h)_\cH + ( (W_{h,n})_+^{n-1},(X_h)_+^{n-1})_\cH + \int_{I_n} a(W_{h,n}, X_h)   + \int_{I_n} b_1(X_h, P_{h,n})
\\ &= \int_{I_n} f(X_h) + ( W_{h,n-1}^{n-1},(X_h)_+^{n-1})_\cH - \int_{I_n} ( G_h', X_h)_\cH - 
\int_{I_n} a(\cI_q G_h,X_h)
\end{split}
\label{nMixedDiscConst}
\\ &&
\int_{I_n} b_1(W_{h,n},R_h)  = \int_{I_n} (\cI_q g_1)(R_h) - \int_{I_n} b_1(\cI_q G_h,R_h)
\label{nMixedDiscConst2}
\end{eqnarray}
for all $X_h \in \PP_{q-1}(I_n;\cV^h_2), R_h \in \PP_{q-1}(I_n;\cQ^h_1) $, where $W_{h,n}=W_h|_{I_n}$, $P_{h,n}=P_h|_{I_n}$ and $W_{h,0}^0 = \bI_{\cU^h} u_0 - G_h(0)$.
Solving the saddle point problem \eqref{nMixedDiscConst}-\eqref{nMixedDiscConst2} requires explicit computations in $\cV^h_2$ and $\cQ_1^h$ but not in $\cV^h$. For the term involving $G_h'$ integration by parts is convenient, cf. the discussion in the previous subsection. 

In the specific case of the Stokes problem (on a polygonal domain $\Omega$) as discussed in Example \ref{example3}, an obvious choice is the   Hood-Taylor pair:
\begin{eqnarray*}
\cQ_1^{h} &=& \{p \in H^1(\Omega)\cap L^2_0(\Omega) ~|~ p|_{T_s} \in \PP_{r-1}(T_s) \text{ for all } ~T_s \in \TT_h\, \},
\\
\cU^{h} &=& \{u \in H^1(\Omega)^d~|~ u|_{T_s} \in  \PP_r(T_s)^d \text{ for all } T_s \in \TT_h\,\},
\end{eqnarray*}
where $2\leq r\in \NN$.
In order to express \eqref{nMixedDiscConst}-\eqref{nMixedDiscConst2} as a linear system, we need an explicit basis of $\cQ_1^{h}$ and of
\[
\cV^{h}_2 = \cU^h \cap \cV_2=\{u \in H^1_0(\Omega)^d~|~u|_{T_s} \in  \PP_r(T_s)^d \text{ for all } ~ T_s \in \TT_h\, \}.
\]
In the same way as in the previous subsection, a basis for $\cV^{h}_2$ can be obtained by taking a nodal basis for $\cU^h$ and omitting the nodes on the boundary $\partial \Omega$. The operators $\bL_{\cQ^h_2} \bI_{\cQ^h_2} $ and $\bI_{\cU^h}$ can also be taken as in the previous subsection. For these choices of finite element spaces the stability Assumption \eqref{assumpB1AAh} is satisfied, even with $\tilde \beta_h$ independent of $h$.

\section{Numerical experiments}\label{SectionNum}
 We consider two Stokes problems with a known analytic solution to validate the results from the error analysis numerically and to illustrate certain phenomena. 
The first example is a  Stokes problem with a non-zero divergence condition and  a non-zero Dirichlet boundary condition. Despite the fact that the results of the error analysis are derived (only) in a semi-discrete setting, we see the predicted convergence rates (w.r.t. time discretization) also in the fully discrete setting. We use this Stokes example not only to validate our theoretical results but also to show that one does \emph{not} obtain optimal results if  the projection $\cI_q$ in \eqref{MixedFullDivp2} is omitted, cf.~Remark \ref{RemarkSubOptIq}.
The second example is a  Stokes  interface problem with a stationary interface. The density and diffusion coefficients are piecewise constant and discontinuous across the interface. We take a homogeneous divergence condition and a non-zero Dirichlet boundary condition (in/out-flow). This less smooth example is also used to illustrate a certain effect related to the semi-norms $|\cdot|_\ell$, which were introduced in Section \ref{SubSectionOptSuper}.

The methods are implemented in the software package DROPS, cf.~\cite{DROPS}.

\subsection{Stokes Problem}\label{NumOne} 
We {consider a problem as in Example~\ref{example3}. 
We take $\Omega = (-1,1)^3$ and a time interval $(0,1)$}. Different from Example \ref{example3}, in the  diffusion part we use the symmetrized gradient $D u =\nabla u+ \nabla u ^T$, i.e., 
$a(u,v)=\int_\Omega Du : Dv$. This difference is not essential. We use the symmetrized gradient because in view of the interface problem considered below it is more natural.  The constraint operators are $B_1=\Div$ and $B_2={\rm tr}_{|\partial \Omega}$. 
We take \[
u = \left( \begin{array}{c}
(x^2+1)(z+y)\sin(4t)\\
(y^2+1)(z+x)\sin(4t)\\
(z^2+1)(x+y)\sin(4t)
\end{array}
\right),\quad p = e^t(x^2+y^2+z^2)
\]
and we discretize the problem \begin{eqnarray}
u' {-} \Div D u + \nabla p &=& f \label{Exp1-1}
\\
\Div u &=& g_1 \label{Exp1-2}
\\
u|_{\partial \Omega} &= &g_2 \label{Exp1-3}
\end{eqnarray}
for the appropriate right hand sides. Note that $g_1\neq 0$, $g_2\neq 0$ and both are time-dependent.
 We take a uniform step size $k=\frac{1}{N}$ in time and  $q=2$.
For the discretization in space we take a triangulation of $\Omega$. To obtain the triangulation $\TT_h$ the domain $\Omega$ is divided into cubes with side length $h:=\frac{1}{N_S}$ and each of the cubes is divided into six tetrahedra. We use the $\PP_2$-$\PP_1$ Hood-Taylor pair
\begin{eqnarray}
\cQ_1^{h} &=& \{p \in H^1(\Omega)~ |~p|_{T_s} \in \PP_1(T_s)\text{ for all }~ T_s \in \TT_h \,\},\label{defQ1}
\\
\cU^{h} &=& \{u \in H^1(\Omega)^3~|~ u|_{T_s} \in  \PP_2(T_s)^3\text{ for all }~ T_s \in \TT_h\,\}. \nonumber
\end{eqnarray}
 To assemble the linear system  {\eqref{nMixedDiscConst}-\eqref{nMixedDiscConst2}} we  use the method described in the previous section. We denote the standard nodal basis of $\cU^h$ by $\{ \psi_{1}, \dots \psi_{N}\} = \Psi$. From the we extract a nodal basis of $\cV_2^h$: $\Psi_0 = \{\psi~|~ \psi\in \Psi \cap H^1_0(\Omega)^3 \}$ and we take $\Psi_{\partial \Omega} = \Psi\setminus \Psi_0$. We treat the explicit constraint $u|_{\partial\Omega} = g_2$ by taking $G_h = \sum_{\psi \in \Psi_{\partial \Omega} }  c(\psi)\psi$ such that $G_h|_{\partial\Omega}(x)=g_2(x)$ for all nodal points $x\in\partial \Omega$. {To compute the  matrices and the right-hand sides in \eqref{MixedFullDiscConst}-\eqref{MixedFullDiscConst2}  we exploit the tensor-product structure of the mesh. A fifth order spatial quadrature rule is used and combined  with a four point Gauss quadrature in time.}
\begin{table}[ht!]
\caption{
\label{TableL2H1-good1}
Error in $L^2\otimes H^1$-norm between $u$ and the solution of \eqref{MixedFullDivp2}. The estimated temporal (spacial) order of convergence $EOC_T$ ($EOC_S$) is computed using the last row (column).
}
\begin{tabular}{r|lllllll}
$N_S$\textbackslash $N$ &4&8&16&32&64&128&$EOC_S$\\
\hline
4&0.47350&0.13324&0.06644&0.05980&0.05936&0.05934&\\
8&0.46973&0.11950&0.03076&0.01038&0.00744&0.00721&3.04024\\
16&0.46966&0.11929&0.02991&0.00753&0.00207&0.00100&2.84747\\
32&0.46966&0.11928&0.02990&0.00748&0.00187&0.00048&1.06226\\
$EOC_T$&&1.97726&1.99607&1.99831&1.99965&1.96332&
\end{tabular}

\caption{\label{TableL2H1-bad1}
Error in $L^2\otimes H^1$-norm between $u$ and the solution of \eqref{MixedFullDivp2}, if we omit the projection $\cI_q$.
The estimated temporal (spacial) order of convergence $EOC_T$ ($EOC_S$) is computed using the last row (column).
}

\begin{tabular}{r|lllllll}
$N_S$\textbackslash $N$ &4&8&16&32&64&128&$EOC_S$\\
\hline
4&0.82210&0.21769&0.07950&0.06081&0.05943&0.05934&\\
8&1.22252&0.31238&0.07942&0.02133&0.00883&0.00732&3.01999\\
16&1.78108&0.45328&0.11427&0.02887&0.00737&0.00206&1.82496\\
32&2.55929&0.64970&0.16318&0.04099&0.01032&0.00261&-0.33854\\
$EOC_T$&&1.97791&1.99326&1.99318&1.99008&1.98258&
\end{tabular}

\end{table}

The Stokes problem \eqref{Exp1-1} - \eqref{Exp1-3} is discretized by \eqref{MixedFullDiscConst}-\eqref{MixedFullDiscConst2}  as described in the previous section. The error $\|u-U_h\|_{L^2\otimes H^1}$ is given in Table~\ref{TableL2H1-good1}. In this table we see that the error is of order $\mathcal{O}(k^2+h^2)$ (cf. diagonals in the table) which is optimal in time, as predicted by Theorem \ref{mainthm1}, and in space. If we omit the projection $\cI_q$ in \eqref{MixedFullDivp2}, then we obtain a sub-optimal results of order $\cO(k^2h^{-\frac{1}{2}} + h^2)$ in Table~\ref{TableL2H1-bad1}.  Note that for a fixed temporal discretization refinement in space results in divergence of order $\mathcal{O}(h^{-1/2})$. This behavior can be observed rather clearly in the first two columns ($N=4,8$) of Table~\ref{TableL2H1-bad1}, where the time discretization error dominates.  The $h^{-\frac12}$ behavior can  be explained using some further analysis, which we do not present here (since it involves the analysis of the 
spatial discretization). 
We also see that along the diagonal we only have a space-time convergence order of 1.5.

We are also interested in superconvergence. Theorem \ref{thmSuper} can be applied not only on $[0,T]$, but also on any subinterval $[0,t_n]$. We thus are interested in the convergence order for the maximal nodal error $\max_{n=1,\dots, N} \|u^n-U_h^n\|_{L^2}$. 
We expect a $\mathcal{O}(k^3+h^3)$ convergence order: optimal in time, due to Theorem \ref{thmSuper} and optimal in space. This is consistent with the numerical results in Table \ref{TableLinfL2-good1}, however, the spatial error dominates.  If we omit the projection operator $\cI_q$, we again obtain results that are  sub-optimal (Table \ref{TableLinfL2-bad1}). In particular we do  not observe superconvergence. Related to this,  the nodal (in time) discretization error can be several orders of magnitude smaller due to the use of the projection operator $\cI_q$, e.g., the results for $N=4, N_S=32$ in the Tables~\ref{TableLinfL2-good1}, \ref{TableLinfL2-bad1}.

\begin{table}[ht!]
{
\caption{\label{TableLinfL2-good1}
Maximal nodal error $\|u^n-U_h^n\|_{L^2}$, where $U_h$ is the solution of  \eqref{MixedFullDivp2}.
The estimated temporal (spacial) order of convergence $EOC_T$ ($EOC_S$) is computed using the last row (column).
}
\begin{tabular}{r|lllllll}
$N_S$\textbackslash $N$ &4&8&16&32&64&128&$EOC_S$\\
\hline
4&0.02137&0.02343&0.02343&0.02346&0.02349&0.02349&\\
8&0.00252&0.00276&0.00277&0.00277&0.00277&0.00277&3.0834\\
16&3.09e-04&3.39e-04&3.40e-04&3.40e-04&3.40e-04&3.40e-04&3.0265\\
32&3.94e-05&4.34e-05&4.38e-05&4.43e-05&4.23e-05&4.23e-05&3.0077\\
$EOC_T$&&-0.1384&-0.0154&-0.0150&0.0666&-5.8e-05&
\end{tabular}
}
\caption{\label{TableLinfL2-bad1}
Maximal nodal error $\|u^n-U_h^n\|_{L^2}$, where $U_h$ is the solution of  \eqref{MixedFullDivp2}, if we omit the projection $\cI_q$.
The estimated temporal (spacial) order of convergence $EOC_T$ ($EOC_S$) is computed using the last row (column).
}
\begin{tabular}{r|lllllll}
$N_S$\textbackslash $N$ &4&8&16&32&64&128&$EOC_S$\\
\hline
4&0.28772&0.06873&0.02488&0.02247&0.02317&0.02341&\\
8&0.37160&0.09146&0.02327&0.00611&0.00293&0.00274&3.0948\\
16&0.41012&0.10133&0.02600&0.00651&0.00164&5.10e-03&2.4255\\
32&0.42844&0.10590&0.02719&0.00683&0.00170&4.27e-04&0.2570\\
$EOC_T$&&2.0164&1.9615&1.9932&2.0028&1.9974
\end{tabular}

\end{table}

We are also interested in the  $L^2\otimes L^2$ error for the pressure $p$. 
Recall that we have two results. Theorem \ref{CeaboundLMbad} provides a general, but sub-optimal result. Theorem \ref{CeaboundLM} provides an optimal result, if Assumption \ref{AssContP} holds. In the setting of this experiment this assumption  is satisfied, cf. Example \ref{H2regularity}. Due to the convexity of $\Omega$ we have $H^2$-regularity for the Poisson problem. We observe an optimal convergence order $\cO(k^2+h^2)$ in Table \ref{TablePL2L2-good1}. Again, if we do not use the projection operator $\cI_q$, we lose the optimal convergence order, see Table \ref{TablePL2L2-bad1}. 
\begin{table}
\caption{\label{TablePL2L2-good1}
Error in $L^2\otimes L^2$-norm between $p$ and the solution of  \eqref{MixedFullDivp2}.
The estimated temporal (spacial) order of convergence $EOC_T$ ($EOC_S$) is computed using the last row (column).
}
\begin{tabular}{r|lllllll}
$N_S$\textbackslash $N$ &4&8&16&32&64&128&$EOC_S$\\
\hline
4&0.42505&0.30505&0.29582&0.29525&0.29522&0.29522&\\
8&0.31218&0.09285&0.05403&0.05064&0.05040&0.05039&2.5507\\
16&0.30845&0.07883&0.02237&0.01204&0.01089&0.01083&2.2185\\
32&0.30828&0.07813&0.01984&0.00589&0.00286&0.00261&2.0540\\
$EOC_T$&&1.9804&1.9771&1.7534&1.0400&0.1347&
\end{tabular}
\caption{\label{TablePL2L2-bad1}
Error in $L^2\otimes L^2$-norm between $p$ and the solution of  \eqref{MixedFullDivp2}, if we omit the projection $\cI_q$.
The estimated temporal (spacial) order of convergence $EOC_T$ ($EOC_S$) is computed using the last row (column).
}
\begin{tabular}{r|lllllll}
$N_S$\textbackslash $N$ &4&8&16&32&64&128&$EOC_S$\\
\hline
4&1.20438&0.69114&0.43399&0.33535&0.30565&0.29784&\\
8&1.32184&0.71175&0.36637&0.18878&0.10372&0.06762&2.1390\\
16&1.35591&0.73062&0.37476&0.18879&0.09479&0.04816&0.4896\\
32&1.36231&0.73455&0.37695&0.19011&0.09513&0.04755&0.0184\\
$EOC_T$&&0.8911&0.9625&0.9875&0.9989&1.0004&
\end{tabular}

\end{table}

\subsection{A Stokes interface problem}\label{SubSectionStokesInterface}
We consider the same setting as in the previous experiment. {However,  we now use discontinuous piecewise constant density and diffusion coefficients $\rho$ and $\mu$ and a spatial domain with a re-entrant corner $\Omega = (-1,1)^3\setminus [-\frac{1}{2},1)\times [0,1)^2$}.  In the previous experiment we have $\rho=\mu=1$ in $\Omega$. 
A stationary interface  separates two subdomains (phases) $\Omega_+ = \{(x,y,z)\in \Omega~|~ x\geq 1/7\}$ and $\Omega_- = \Omega \setminus \Omega_+$. The density and diffusion coefficients are $\rho = \mu = 5\chi_{\Omega_-} + \chi_{\Omega_+}$. 
The corresponding equations read
\begin{eqnarray}
\rho u' {-} \Div (\mu D u) + \nabla p &=& f  \quad \text{in}~~L^2(I;H^{-1}(\Omega)) \label{Exp2-1}
\\
\Div u &=& 0 \label{Exp2-2}
\\
u|_{\partial \Omega} &= &g \label{Exp2-3}.
\end{eqnarray} 
{ In this problem we have $a(u,v)=\int_\Omega \mu Du : Dv$ and the constraint operators $B_1$, $B_2$ are defined as in the previous experiment. 
In order to apply the theory from the previous sections, we replace the standard $L^2$ scalar product by $(u,v)_\cH=\int_\Omega \rho u\cdot v$, see Remark~\ref{remgenral}.}
Note that the jump in $\mu$ strongly influences the behavior of the semi-norms $|\cdot|_\ell$ which were introduced in Subsection \ref{SubSectionOptSuper}.
We take
\begin{eqnarray*}
p & = &\left(\frac{32}{7} t^2 z\right)\chi_{\Omega_+}(x,y,z),
\\u&= &\left( \begin{array}{l}
2t^2z(x^2+y^2)
\\
-4t^2xyz
\\
-2/3 t^2 x(x^2+3y^2)
\end{array}\right)
\end{eqnarray*}
and the corresponding $f$ and $g$.
{We use a similar tetrahedral triangulation of the domain and  obtain the space $\cU^h$  in the same way.} To account for the discontinuity in $p$ we use the following XFEM space, cf. \cite{Reusken}:
\[
  \cQ_1^{h,X}:= \mathcal{R}_+  \cQ_1^{h} \oplus  \mathcal{R}_-  \cQ_1^{h},
\]
where $\cQ_1^{h}$ is the standard finite element space of piecewise linears, cf.~\eqref{defQ1}, and $ \mathcal{R}_\pm:v\mapsto v \chi_{\Omega_\pm}$ is the restriction operator to the subdomain $\Omega_\pm$.
{ The basis for $\cV^h_2$ and the interpolation operator for the boundary data $g$ are  as in the previous experiment.}
For our particular choice of $u, p$ and $\mu$, we have that $f\in C^\infty(I;L^2(\Omega)^d)$. From this, $\rho u'\in C^\infty(I;L^2(\Omega))$ and $p\in C^\infty(I;L^2(\Omega))$ we can conclude that $\Div (\mu D u) \in C^\infty(I;\cV_\cH')$, where $\cV_\cH$ is defined as in Example \ref{H2regularity}. {\color{red}It follows that  $\int_0^T |\Div (\mu D u^{(2)})|_0^2 < \infty$.} We can expect at least a temporal  superconvergence rate of $2.5$ in Theorem \ref{mainthm2}. This is what we indeed  observe in Table \ref{TableLinfL2-good2}, namely a convergence of (approximately) order $\cO(k^{2.5}+h^3)$, where after a few temporal refinements the spatial error dominates. In the global $L^2\otimes H^1$-norm we observe an optimal temporal convergence order in Table \ref{TableL2H1-good2}.

Concerning the pressure discretization we note the following. For given $t\in I$ the pressure $p(t)$ can be expressed \emph{exactly} in the XFEM space. Hence,  the spatial convergence order of the pressure discretization is not relevant. { Due to Theorem \ref{Thmsuboptimal} we expect at least a temporal convergence order $\cO(k^{1.5})$. The actual temporal convergence order, which we see in Table \ref{TablePL2L2-good2}, is $\cO(k^2)$. This is despite the fact that the projection $P_\cH$ form Assumption \ref{AssContP} involves the discontinuous coefficient $\rho$ and the domain $\Omega$ has a re-entrant corner. The latter implies that  Assumption \ref{AssContP} does \emph{not} hold. For an explanation of this  we refer to Remark~\ref{RemH2Reg}. }

\begin{table}[ht!]
{
\caption{\label{TableL2H1-good2}
Error in $L^2\otimes H^1$-norm between $u$ and the solution of  \eqref{MixedFullDivp2}.
The estimated temporal (spacial) order of convergence $EOC_T$ ($EOC_S$) is computed using the last row (column).
}

\begin{tabular}{r|lllllll}
$N_S$\textbackslash $N$ &4&8&16&32&64&128&$EOC_S$\\
\hline
4&0.12001&0.09779&0.11857&0.12821&0.12821&0.12954&\\
8&0.08439&0.02558&0.01569&0.01503&0.01497&0.01497&3.11318\\
16&0.08483&0.02148&0.00612&0.00333&0.00307&0.00306&2.29090\\
32&0.08388&0.02101&0.00528&0.00138&5.40e-04&4.35e-04&2.81364\\
$EOC_T$&&1.99728&1.99302&1.93063&1.35955&0.31024&
\end{tabular}
\caption{\label{TableLinfL2-good2}
Maximal nodal error $\|u^n-U_h^n\|_{L^2}$, where $U_h$ is the solution of   \eqref{MixedFullDivp2}.
The estimated temporal (spacial) order of convergence $EOC_T$ ($EOC_S$) is computed using the last row (column).
}
\begin{tabular}{r|lllllll}
$N_S$\textbackslash $N$ &4&8&16&32&64&128&$EOC_S$\\
\hline
4&0.03099&0.03173&0.03384&0.03479&0.03479&0.03491&\\
8&0.00420&0.00362&0.00361&0.00361&0.00361&0.00361&3.27398\\
16&0.00243&6.05e-04&4.47e-04&4.42e-04&4.42e-04&4.42e-04&3.02991\\
32&0.00234&3.77e-04&7.98e-05&5.34e-05&5.25e-05&5.25e-05&3.07321\\
$EOC_T$&&2.63560&2.24033&0.58021&0.02296&0.00057&
\end{tabular}

\caption{\label{TablePL2L2-good2}
Error in $L^2\otimes L^2$-norm between $p$ and the solution of   \eqref{MixedFullDivp2}.
The estimated temporal (spacial) order of convergence $EOC_T$ ($EOC_S$) is computed using the last row (column).
}
\begin{tabular}{r|lllllll}
$N_S$\textbackslash $N$ &4&8&16&32&64&128&$EOC_S$\\
\hline
4&0.60072&0.85615&1.48345&1.67915&1.67916&1.75676&\\
8&0.26843&0.22768&0.22215&0.24958&0.24960&0.24962&2.81511\\
16&0.21029&0.06563&0.03246&0.03015&0.02991&0.03032&3.04123\\
32&0.21296&0.05310&0.01362&0.00413&0.00257&0.00245&3.63160\\
$EOC_T$&&2.00379&1.96269&1.72067&0.68326&0.07327&
\end{tabular}
}
\end{table}

\section{Conclusion and outlook}\label{SectionOutlook}
We have studied, in an abstract framework, DG time discretization methods for parabolic problems with non-homogeneous linear constraints. Two common ways of treating such linear constraints, namely explicit or implicit (via Lagrange multpiliers), were discussed. These different treatments lead to different variational formulations of the parabolic problem. For these formulations we introduced a \emph{modification} of the standard DG time discretization method in which an appropriate projection is used in the discretization of the constraint; see \eqref{Divp}-\eqref{Divp1} and \eqref{Divpmixed}-\eqref{Divp1mixed}. 
For these discretizations (optimal) discretization error bounds are derived; see Theorem~\ref{mainthm1}, Theorem~\ref{thmSuper}, Theorem~\ref{Thmsuboptimal}, Theorem~\ref{Thmpoptimal}. In the latter two theorems error bounds for the Lagrange multiplier are presented. As far as we know, even for the case with homogeneous constraints, such bounds are not available in the literature, yet.  Numerical experiments confirm the predicted optimal convergence. Furthermore, experiments show that without the modification the (standard) DG method yields sub-optimal results.

We consider the following topics to be of interest for  future research. The fully discrete scheme has merely been introduced in this paper and no error bounds for the fully discrete (in time and space) schemes have been derived. It seems not straightforward how to obtain satisfactory error bounds for the fully discrete schemes. In particular it is not clear how to derive a superconvergence result and  optimal convergence results for the Lagrange multiplier.  Another topic is related to problems where the occurring operators, both in the parabolic equation ($A$ and $(\cdot,\cdot)_\cH$) and in the constraints ($B_1$ and $B_2$), depend on time. It is not clear which convergence results hold (and can be proved) for problems with time-dependent operators, especially ones with low regularity. This includes, for example, operators that arise from multi-phase flow problems. 
\bibliographystyle{siam}
\bibliography{../DG}

 \end{document}